\documentclass[12pt]{article}

\usepackage{epic,eepic,euscript,verbatim,amsmath,amssymb,
amsthm,amsfonts,afterpage,float,bm,stmaryrd,paralist,
mathrsfs,color,multirow,cases}
\usepackage{booktabs,enumitem}
\usepackage{csvsimple}
\usepackage{colordvi}
\usepackage{graphicx}
\usepackage{subfigure}
\usepackage{caption}
\usepackage{wrapfig}
\usepackage{algpseudocode}
\usepackage{authblk}
\usepackage{lipsum}
\usepackage[titletoc]{appendix}
\usepackage{setspace}
\usepackage[numbers,sort&compress]{natbib}
\usepackage{lineno}

\newtheorem{theorem}{Theorem}[section]
\newtheorem{lemma}{Lemma}[section]
\newtheorem{proposition}{Proposition}[section]

\newtheorem{definition}{Definition}[section]
\newtheorem{remark}{Remark}[section]

\numberwithin{equation}{section}
\numberwithin{figure}{section}

\def\XXint#1#2#3{{\setbox0=\hbox{$#1{#2#3}{\int}$}
\vcenter{\hbox{$#2#3$}}\kern-.51\wd0}}

\newcommand{\R}{\mathbb{R}}

\newcommand{\ve}{\varepsilon}
\newcommand{\cA}{\mathcal{A}}
\newcommand{\cB}{\mathcal{B}}
\newcommand{\cE}{\mathcal{E}}
\newcommand{\cL}{\mathcal{L}}
\newcommand{\cT}{\mathcal{T}}

\newcommand{\dist}{\operatorname{dist}}

\begin{document}
\title{Maximizing Reaction via Spatial Localization: 
A PDE-Constrained Variational Approach} 

\author{Mohammad Yasir Feroz Khan} 
\author{Bo Li}

\affil{Department of Mathematics\\
University of California San Diego\\
La Jolla, California, USA
\\
Email: m5khan@ucsd.edu and  bli@math.ucsd.edu
}

\date{ }

\maketitle

\begin{abstract}
We study a PDE-constrained variational problem for the optimal spatial distribution of a catalytic material reacting with a substrate in a biological or materials system.
With a given catalyst concentration, the substrate concentration is determined by 
a reaction-diffusion equation. The reaction functional for the catalyst concentrations is 
defined to be the total reaction output, which is the product of catalyst
and substrate concentrations. Both the linear and Michaelis--Menten reaction kinetics are 
considered. We construct reaction-maximizing sequences through 
the localization of catalysts
using barrier functions and show the non-existence of a global maximizer. 
We also derive the first and second variations of the reaction functional. By constructing a competing catalyst concentration, we show the non-existence of 
a local maximizer. Based on our analysis, we propose
a regularized variational model and prove the existence of its global maximizer.

\vspace{2 mm}

\noindent
{\bf Key words.}  PDE-constrained variational problems,  localization,  barrier functions, non-existence
of maximizers.

\vspace{2 mm}

\noindent
{\bf AMS subject classification:} 35B38, 35J20, 35J25,  35Q93,  49J99, 49S05, 92E20. 

\end{abstract}

\allowdisplaybreaks

\section{Introduction}
\label{s:Intro}

Chemical reactions are fundamental biological, chemical, and material processes,
where catalytic molecules or materials interact with
substrates that are targeted molecules or materials to result in reaction products. 
Reaction rates often measure the efficiency of such a process
\cite{Cornish-Bowden_2012,Lente_2015}. Spatial distributions of catalysts, however, can significantly 
affect such efficiency. For instance, experiments and theoretical biophysical 
studies have revealed that various types of spatial distributions of enzymes in cells can optimize
the reaction flux, the total amount of reaction product, under different biological constraints.  
Such optimal distributions occur in cell
signaling pathways that consist of sequential enzymatic reactions and that are crucial to biological functions
\cite{Lente_2015,Marcotte_Rev2012,Srere_EnzymeComplexes_Rev1987,Chenoweth_JACS2018}. 
Patterns of such distributions include localization and co-localization, 
clustering and co-clustering, sequestration, 
and compartmentalization \cite{Gerland_CommunPhys2022,Gerland_JCP2013,Gerland_PRL2013,Gerland_NatPhys2022,Wingreen_NatBiotech2014,Hinzpeter_BiophysJ2017,tenWolde_PoLS2007,An_Biochem2017,Gerland_Interface2019}.
Intuitively, the reaction flux, which quantifies the reaction efficiency, should increase
if enzyme molecules are localized in the spatial regions where the substrate molecules 
are mostly concentrated. Here, we 
carry out mathematical analysis of a variational model for maximizing the reaction flux, 
focusing on understanding the mechanisms of such localization.

To be more precise, we
study the variational problem of maximizing the reaction functional 
(also termed the total reaction flux)
\begin{equation}
\label{Ju}
J[u] = \int_\Omega u f(\rho_u) \, dx 
\end{equation}
over the class of admissible catalyst (e.g., enzyme molecule) concentrations 
\begin{align}
\label{cE}
& \cE = \left\{ u \in L^2(\Omega): u \ge 0 \mbox{ a.e.\ in } \Omega  \mbox{ and }
 \int_\Omega u \, dx = M_{\rm e} \right\},
\end{align}
where $\rho = \rho_u \in H^1(\Omega)$ 
is the substrate concentration
 defined to be the unique, nonnegative solution
to the boundary-value problem
\begin{equation}
    \label{BVP4rho}
\left\{
\begin{aligned}
    & - \Delta \rho+ \sigma \rho + u f(\rho) = 0 & &\quad  \mbox{in } \Omega, & \\
    & \rho = g &  &\quad \mbox{on } \partial \Omega. & 
\end{aligned}
\right.
\end{equation}
Here, $\Omega \subset \R^3$ is a bounded and smooth domain, 
$M_{\rm e}>0$ and $\sigma > 0$ are given constants, $g \in H^2(\Omega)$ is given and satisfies 
$g \ge 0$ in $\Omega$, and the function $f$ is defined for 
two different cases by 
\begin{equation}
\label{f}
f(s) = \left\{
\begin{aligned}
    & s & &  \quad \mbox{for the linear case}, \\
    &  \frac{s}{s+1} & & \quad \mbox{for the nonlinear case}.
\end{aligned}
\right.
\end{equation}
The nonlinear form is the non-dimensionalized Michaelis--Menten kinetics
\cite{Cornish-Bowden_2012}. 

Our construction of the model, which is in a dimensionless form, follows from 
\cite{Gerland_PRL2013,Gerland_JCP2013} (cf.\ also
\cite{Gerland_CommunPhys2022,Gerland_NatPhys2022,Rios_JCP2025,Plasson_PRE2025}). 
Both the reaction coefficient in 
front of the reaction product $u f(\rho_u)$ and the diffusion coefficient in front of the 
term $\Delta \rho$ in \eqref{BVP4rho} are  rescaled to be the unity.
The $\sigma$ term in \eqref{BVP4rho} describes the loss of substrate via a process other than the reaction.
As in \cite{Gerland_PRL2013,Gerland_JCP2013}, we consider the steady-state substrate
concentration $\rho_u$; cf.\ \eqref{BVP4rho}. It is
 determined by the admissible enzyme concentration $u$, together with the boundary value $g$ 
that determines the spatial regions where the concentration $\rho_u$ is large.
Focusing on the localization, we use the Dirichlet instead of Neumann boundary conditions. 
While other effects, such as transport via fluid 
flow, electrostatic interaction, 
and a multi-stage reaction process, are excluded, our analysis will show that 
the PDE-constrained variational model has the essential components
of reaction to facilitate the catalyst spatial localization that can be experimentally implemented
(cf. e.g., \cite{Chenoweth_JACS2018}).

Our main results include the non-existence
of local maximizers of the flux functional $J$ over the set $\cE$ of admissible enzyme concentrations, 
and the existence is recovered through a regularization. More details
of our results and the key ideas of their proofs
are outlined below.

First, we construct a maximizing sequence of catalyst concentrations $u_\ve \in \cE$ 
for the reaction functional $J$ defined in \eqref{Ju}: 
$
\lim_{\ve \to 0^+} J[u_\ve] = \sup_{u \in \cE} J[u],
$
and show that the functional $J$ does not admit any global maximizer; cf.\
Theorem~\ref{t:supJ} and Theorem~\ref{t:uve}. Localization is the key
in such constructions: Each $u_\ve$ is a scaled characteristic function supported near a boundary point where $g$ attains its maximum. As $\ve \searrow 0$, the volume of the support of 
 $u_\ve$ converges to $0$, while its value tends to infinity. For a smooth and bounded domain $\Omega \subset \R^3$, the 
 construction of $u_\ve$ is done by using barrier functions and different length scales given by powers of $\ve.$ However, the essence behind such localization can be seen
 from a one-dimensional setting for which we can do explicit calculations. 

 Second, we prove that in fact the functional $J: \cE \to \R$ does not 
 permit a local maximizer with respect to the $L^2(\Omega)$-norm. 
 To show this, we calculate the first and second variations of the reaction functional
$J;$ cf.\ Theorem~\ref{t:FirstVariations} and Theorem~\ref{t:SecondVariations}.
A consequence of these calculations is that, if $u \in\cE$ is a critical point, then it 
satisfies 

\[
f(\rho_u) ( 1 - \psi_u)=c_u \quad \mbox{ a.e.\ in }\,  \{u>0\} \qquad \mbox{and} \qquad 
| \{ u = 0 \} | > 0, 
\]
where $c_u\ge0$ is a constant and
$\psi_u\in H_0^1(\Omega)\cap H^2(\Omega)$ solves the
corresponding adjoint equation.
Here and below, 
$|A|$ denotes the Lebesgue measure of a Lebesgue-measurable
set $A$.
If $u$ is also a local maximizer, then we have
the first variation $J'_+[u][v] \le 0$ for any one-sided perturbation $v$ 
defined by $u + t v \in \cE$ for all sufficiently small $t>0.$ This allows us to construct suitable
perturbations by moving some mass from $\{ u>0 \}$ into $\{ u = 0 \}$ to obtain
\[
f(\rho_u) ( 1 - \psi_u)\le c_u \quad \mbox{ a.e.\ in }\,  \{u= 0\}. 
\]
By the previous equality, this inequality then holds true on the entire $\Omega,$ 
which, together with the boundary condition and other properties of $\rho_u$, will lead to a contradiction.

Third, we propose a regularized variational model with the new reaction functional
\begin{equation}
\label{Jveu}
\hat{J}[u]= \int_\Omega \left[ u f(\rho_u) - \frac{1}{\beta} u \log u - \frac{\ve}{2} |\nabla u|^2 \right] dx,
\end{equation}
where $\beta > 0$ is a constant inversely proportional to temperature, $\ve > 0$ is a small and fixed parameter, 
and the function $\rho = \rho_u$ is the unique solution to the boundary-value problem \eqref{BVP4rho}. 
The second term of $\hat{J}$ describes the entropic effect of the catalyst distributions, while the last 
term, the gradient-square term, is a mathematical regularization. 
Using the direct method in the calculus of variations, we prove the existence of a global maximizer among all  $u \in\cE\cap H^1(\Omega)$. We will address further questions about the new model, such as the asymptotic behavior of the functional and its maximizers as $\ve \searrow 0$, as well as the gradient dynamics, in future work.

From the proof of the non-attainment of a global 
maximizer (cf.\ section~\ref{s:Local}), we see that the localization, 
together with the mass conservation, of enzyme (concentration) $u$ 
is the key for the reaction flux $J[u]$ to be as large as possible. Therefore, 
the model captures the essential physical effect. 
The lack of control of the gradient of enzyme concentration $u$ 
in the flux functional $J: \cE \to \R $, which is bounded,
leads to the unlimited localization, and hence
 the non-existence of a maximizer. With the gradient-square term included, 
 the regularized model \eqref{Jveu}, where the coefficient $\ve > 0$ 
 sets the length scale that limits
 the localization, admits a global maximizer.  
 In the gradient-ascent
 dynamics, this gradient-square term corresponds to the diffusion of enzyme molecules. 
In addition to diffusion, another physical effect that can prevent the non-attainment of a global maximizer 
is the entropic effect, as more crowded and localized molecules can decrease the conformational entropy, 
increasing the free energy and decreasing the
thermodynamic stability. In the regularized model, such effect is 
captured by the term $- u \log u$; cf.\ \eqref{Jveu}. 
We note that the molecular size effect arising from the minimum size of an underlying
enzyme molecule is not included in our regularized model. 
Such an effect may prevent the support of 
enzyme concentration $u$ from being infinitesimally small and hence unlimited localization 
due to the constraint of mass conservation. 
This is reminiscent of the ionic size effect in the 
Poisson--Boltzmann (PB) theory for electrostatic interactions. 
The electrostatic potential predicted by the classical PB theory can be unreasonably 
large near a charged wall, in contrast to experimental observations. With the ionic size effect 
incorporated into the continuum PB model, the potential is bounded; cf.\ 
\cite{BAO_PRL97,IglicIglic94,Li_Nonlinearity09,LiLiuXuZhou_Nonlinearity2013,ZhouWangLi_PRE11}.

The rest of the paper is organized as follows. In section~\ref{s:Model}, we study the 
basic properties of the substrate concentration $\rho = \rho_u$
as the solution to the boundary-value problem \eqref{BVP4rho} and its continuous dependence 
on the catalyst concentration $u.$ 
In section~\ref{s:Local}, we construct a maximizing sequence $u_\ve \in \cE$ $(0 < \ve\ll 1)$ for the reaction functional
$J: \cE \to \R$.
We also show the
non-existence of a global maximizer. 
In section~\ref{s:FirstVar}, we calculate the first and second variations
of the reaction functional $J: \cE \to \R$.  
In section~\ref{s:CriticalPoints}, we show the non-existence of a local maximizer of the reaction functional
$J: \cE\to \R.$ Finally, in section~\ref{s:Regularization}, we show that the proposed regularized
variational model admits a global maximizer.

\section{Some Preliminary Results}
\label{s:Model}
Unless otherwise stated, we assume the following throughout: 
\begin{compactenum}
    \item[(A1)]
    The set $\Omega \subset \R^3$ is a bounded domain (i.e., open and connected)
    with a $C^2$-boundary $\partial \Omega;$
\item[(A2)]
Both $M_{\rm e} > 0$ and $\sigma > 0$ are given constants;  
\item[(A3)]
The function $g\in  H^2(\Omega)\hookrightarrow C(\overline{\Omega})$ is given. 
It satisfies $g \ge 0$ in  $\overline{\Omega}$ and there exists 
$y_0 \in \partial \Omega$ such that 
\begin{equation}
\label{gmax}
g_{\rm max} := g(y_0) = \max_{y \in \partial \Omega } g(y)
= \| g \|_{L^\infty(\partial \Omega)}> 0.
\end{equation}

\end{compactenum}
For any $p \in [1, \infty]$ and any $h\in H^1(\Omega)$, we denote, respectively, 
\begin{align*}
&L^p_+ (\Omega) = \{ u \in L^p(\Omega): u \ge 0 \mbox{ a.e.\ in } \Omega \}, 
\\
&
H^1_h(\Omega) = \{ v  \in H^1(\Omega): v= h \mbox{ on } \partial \Omega \}, 
\end{align*}
where the boundary value is understood in the trace sense. 
(We use the standard notations of Lebesgue and Sobolev 
spaces \cite{Adams75,folland,GilbargTrudinger98}.)
For any Lebesgue-measurable set $A \subseteq \R^3$, 
we denote by $|A|$ the Lebesgue measure of $A.$

Let $u \in L^2_+(\Omega)$. We define $I_u: H^1(\Omega) \to [0, \infty] $ by 
\begin{equation}
    \label{Iu}
I_u [ \rho ] = \int_\Omega \left[ \frac12 | \nabla \rho |^2 + \frac{\sigma}{2} \rho^2 + u F(\rho)
\right] dx, 
\end{equation}
where 
\begin{equation}
\label{F}
F( s ) = \left\{ 
\begin{aligned}
   & \frac12 s^2 \quad & & \mbox{for the linear case},\\
   &  s - \log (1 + s) \quad & & \mbox{for the nonlinear case}. 
\end{aligned}
\right.
\end{equation}
Here and below $\log $ denotes the natural logarithm with $\log s = -\infty$ if $s \le 0$ and
$s \log s = 0$ if $s = 0.$
Note that the function $F$ satisfies the following: 
\begin{compactenum}
    \item[(F1)] $F$ is smooth, nonnegative, and convex in its domain $(-\infty, \infty)$ or 
    $(-1, \infty)$ for the linear
    or nonlinear case, respectively; 
    \item[(F2)] For the nonlinear case: $F:(-1, \infty) \to \R$ 
    decreases in $(-1, 0)$ and increases in $(0, \infty).$ 
    In particular, $F$ attains its minimum $0$ uniquely at $s =0.$ Moreover, 
   $F(s) < s $ for any $s > 0$ and $F(s)/s \to 1 $ as $s \to +\infty.$
   \item[(F3)] With $f$ defined in \eqref{f}, 

\begin{equation}
\label{Fpf}
F'(s) = f(s) = 
\left\{ 
\begin{aligned}
   & s \quad & & \mbox{if } -\infty < s < \infty, & & \quad \mbox{for the linear case}, \\
   &  \frac{s}{s+1} & & \mbox{if }  s >-1,  & & \quad \mbox{for the nonlinear case}. 
\end{aligned}
\right.
\end{equation}
\end{compactenum}

\begin{theorem}
    \label{t:rhou}
   For $u \in L^2_+(\Omega)$, the functional $I_u: H_g^1(\Omega) \to [0, \infty]$ admits a 
    unique minimizer $\rho_u\in H_g^1(\Omega):$ 
    \begin{equation}
    \label{minIu}
    I_u [ \rho_u ] = \min_{\xi \in H_g^1(\Omega)} I_u [\xi].
    \end{equation} 
The unique minimizer $\rho_u$ is the unique nonnegative weak solution to the boundary-value problem
\eqref{BVP4rho}, 
defined by $\rho_u \in H_g^1(\Omega)$, $\rho_u \ge 0$ a.e.\  in $\Omega,$ and 
\begin{equation}
\label{LinftyH01}
\int_\Omega\left[ \nabla \rho_u \cdot \nabla \phi + \sigma \rho_u \phi
+ u f(\rho_u) \phi \right] dx = 0 \qquad \forall \phi \in H_0^1(\Omega). 
\end{equation}
Moreover, $\rho_u \in H^2(\Omega)\hookrightarrow C(\overline{\Omega})$ and    
$0 \le \rho_u < g_{\rm max}$ in $\Omega.$ 
If $| \{ u > 0 \}| > 0$, then
$| \{ \rho_u > 0 \} \cap \{ u > 0 \} | > 0.$ 
\end{theorem}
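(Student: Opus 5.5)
We use the direct method, then upgrade regularity and prove the bounds with comparison and maximum-principle arguments. One issue comes first: in the nonlinear case $F$ is only defined on $(-1,\infty)$, and we may have $\rho<0$. We therefore extend $F$ to all of $\R$ by a convex, nonnegative, $C^1$ function that is even about $0$ on $(-\infty,0]$, for instance $\tilde F(s)=F(|s|)$. Then $\tilde F' = \tilde f$ is odd and bounded by $1$, so $u\tilde f(\rho)\in L^2$. The nonnegativity proved below will show that the extension does not matter.

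\textbf{Existence and uniqueness of the minimizer.} Since $g\in H^1_g(\Omega)$ and
\[
I_u[g]\le \tfrac12\|g\|_{H^1}^2+\tfrac{\sigma}{2}\|g\|_{L^2}^2+\|u\|_{L^2}\|F(g)\|_{L^2}<\infty,
\]
the infimum is finite; here $g\in C(\overline\Omega)$ makes $F(g)$ bounded. Because $\sigma>0$, the functional is coercive on the affine space $H^1_g(\Omega)$. The quadratic part is weakly lower semicontinuous. The term $\int uF(\rho)$ is lower semicontinuous by Fatou's lemma, using a.e.\ convergence along a subsequence (Rellich) and $F\ge0$. Uniqueness follows from strict convexity of the quadratic part together with convexity of $F$. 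Replacing a minimizer $\rho$ by $|\rho|$ leaves $|\nabla\rho|$ and $\rho^2$ unchanged, and also $\tilde F(\rho)=\tilde F(|\rho|)$ by evenness, while $g\ge0$ keeps the boundary trace. Hence $|\rho|$ is also a minimizer, so uniqueness gives $\rho_u=|\rho_u|\ge 0$.

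\textbf{Euler--Lagrange equation and regularity.} Differentiating $t\mapsto I_u[\rho_u+t\phi]$ gives \eqref{LinftyH01}. The derivative may be passed under the integral by dominated convergence: in the nonlinear case $|f|\le1$. In the linear case $u\rho\phi\in L^1$, since $u\in L^2$, $\rho,\phi\in L^6$, and $L^2\cdot L^6\cdot L^6\subset L^{1}$ in 3D. Conversely, any nonnegative weak solution is a critical point of the convex functional, hence the minimizer, which gives uniqueness of nonnegative weak solutions. For regularity, write $-\Delta(\rho_u-g)=-\sigma\rho_u-uf(\rho_u)+\Delta g$. In the nonlinear case the right side is in $L^2$, so $H^2$ elliptic regularity on the $C^2$ domain applies directly. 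In the linear case $u\rho_u\in L^{3/2}$ at first, so we bootstrap: $W^{2,3/2}\hookrightarrow L^q$ for all $q<\infty$, which gives $u\rho_u\in L^{2-}$. A cleaner route is to prove $\rho_u\in L^\infty$ first, by comparison (next step), so that $u\rho_u\in L^2$ and $H^2$ regularity follows at once.

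\textbf{The upper bound.} Test \eqref{LinftyH01} with $\phi=(\rho_u-g_{\max})^+\in H^1_0(\Omega)$, which is admissible since $g\le g_{\max}$ on $\partial\Omega$. This gives
\[
\int |\nabla\phi|^2+\sigma\rho_u\phi+uf(\rho_u)\phi=0 .
\]
Every term is nonnegative, so $\phi=0$ and $\rho_u\le g_{\max}$. For the strict inequality: $w=g_{\max}-\rho_u\ge0$ is continuous and satisfies
\[
-\Delta w+\sigma w=\sigma g_{\max}+uf(\rho_u)\ge\sigma g_{\max}>0 .
\]
So $w$ is a nonnegative supersolution that is not identically zero, and the strong maximum principle for $W^{2,2}$ strong solutions in connected $\Omega$ gives $w>0$ in $\Omega$. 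If applying the strong maximum principle with an $L^2$ coefficient feels delicate, note that only the constant-coefficient operator $-\Delta+\sigma$ appears here, with a positive right-hand side, so the weak Harnack inequality suffices.

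\textbf{The last claim, which I expect to be the subtlest point.} Suppose $|\{u>0\}|>0$ but $\rho_u=0$ a.e.\ on $\{u>0\}$. Then $uf(\rho_u)=0$ a.e., so $\rho_u$ solves $-\Delta\rho+\sigma\rho=0$ with $\rho=g$ on $\partial\Omega$. Since $g_{\max}>0$, $\rho_u\not\equiv0$, and it is a nonnegative solution of a linear equation with constant coefficients. By the strong maximum principle (or Harnack), $\rho_u>0$ everywhere in the connected set $\Omega$. This contradicts $\rho_u=0$ on a set of positive measure.
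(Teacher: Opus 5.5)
Your proof is correct and has the same skeleton as the paper's. Both use the direct method, a truncation argument plus uniqueness for nonnegativity (you use $|\rho|$ where the paper uses $\rho^+$), the Euler--Lagrange equation, a comparison bound $\rho_u\le g_{\rm max}$, $H^2$ regularity once $uf(\rho_u)\in L^2(\Omega)$, and the strong maximum principle for weak supersolutions of $-\Delta+\sigma$ for both the strict bound and the last claim.

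The differences are technical:
\begin{itemize}
\item Your even extension $\tilde{F}(s)=F(|s|)$ makes $|\tilde{f}|\le 1$ on all of $\R$, so you can differentiate directly in every direction $\phi\in H_0^1(\Omega)$. The paper keeps $F=+\infty$ on $(-\infty,-1]$ and differentiates only along $\phi\in H_0^1(\Omega)\cap L^\infty(\Omega)$, so that $\rho_u+t\phi\ge -1/2$. It reaches all of $H_0^1(\Omega)$ by density after proving $\rho_u\in L^\infty(\Omega)$.
\item You get $\rho_u\le g_{\rm max}$ by testing the weak equation with $(\rho_u-g_{\rm max})^+$. The paper instead compares $I_u$ at $\min\{\rho_u,g_{\rm max}\}$ and uses uniqueness of the minimizer.
\item You get uniqueness of nonnegative weak solutions from the fact that a critical point of a convex functional is its minimizer. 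The paper tests the difference of two solutions and uses the monotonicity of $f$.
\end{itemize}
None of these changes the substance.

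The one step you should not leave as a bare assertion is that the extension ``does not matter.'' Write $\tilde{I}_u$ for your functional built with $\tilde{F}$. Nonnegativity of its minimizer does not by itself show two things you need. First, that this minimizer also minimizes the original $I_u$ over all of $H_g^1(\Omega)$. Second, that $I_u$ has no other minimizer. The reason is that competitors may take negative values, where $F$ and $\tilde{F}$ differ.

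One line fixes this. Since $F\ge 0=F(0)$, for every $\xi\in H_g^1(\Omega)$ you have $I_u[\xi]\ge I_u[\xi^+]=\tilde{I}_u[\xi^+]\ge\tilde{I}_u[\rho_u]=I_u[\rho_u]$. Uniqueness for $I_u$ then follows from its strict convexity on the set where it is finite.

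Alternatively, check that $F(s)\ge F(|s|)$ for $s\in(-1,0)$. This amounts to $\log\frac{1+a}{1-a}\ge 2a$ for $a\in(0,1)$. It gives $\tilde{I}_u\le I_u$, so any minimizer of $I_u$ is also a minimizer of $\tilde{I}_u$.
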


\begin{proof}
Since $u \ge 0$ a.e.\ in $\Omega$ and $F \ge 0$, we have 
\begin{equation}
\label{Irhomin}
I_u [\rho] \ge \frac{1}{2}\min \{ 1, \sigma \} \| \rho \|_{H^1(\Omega)}^2 
\qquad \forall \rho \in H_g^1(\Omega). 
\end{equation}
Thus, $\alpha:=\inf_{\rho \in H_g^1(\Omega) }I_u[\rho] \ge 0.$
Moreover, $\alpha \le I_u[g] < \infty.$ If $\rho \in H_g^1(\Omega),$ then 
$\rho^+:=\max \{\rho, 0\}\in H_g^1(\Omega)$. Moreover, $I_u[\rho^+] \le I_u[\rho]$, since
$(\rho^+)^2 \le \rho^2, $  $ |\nabla \rho^+| \le |\nabla \rho|, $ and 
$u F(\rho^+) \le u F(\rho)$ a.e.\ in $\Omega$. 
Thus, minimizing $I_u$ over $H_g^1(\Omega)$ is the same as minimizing 
$I_u$ over the class of nonnegative functions in $H_g^1(\Omega).$ Note that $g$ is such a function.

Let $\rho_k \in H_g^1(\Omega)$ $(k = 1, 2, \dots) $ be such that $I_u[\rho_k]\to \alpha.$ We may assume $\rho_k \ge 0$ 
a.e.\  in $\Omega$ as $I_u[\rho_k^+] \le I_u [\rho_k]$ for each $k$. 
It follows from \eqref{Irhomin} that  $\{ \rho_k \}$
is bounded in $H^1(\Omega)$. Thus, it has a subsequence, not relabeled, such that 
$\rho_k\to \rho_u$ weakly in $H^1(\Omega),$ strongly in $L^2(\Omega)$, and a.e.\ in $\Omega$ for some $\rho_u \in H_g^1(\Omega)$
with $\rho_u \ge 0$ a.e.\ in $\Omega.$
Now, by the weak lower semi-continuity of nonnegative quadratic functionals and Fatou's Lemma, we have
\[
\alpha = \liminf_{k\to \infty} I_u[\rho_k] \ge I_u[\rho_u] \ge \alpha, 
\]
leading to \eqref{minIu}. Since
$F $ is convex in its domain, 
$I_u$ is strictly convex on
the subset of $H_g^1(\Omega)$ on which $I_u$ is finite. Thus, the minimizer of $I_u$ is unique.

Let $\hat{\rho}_u = \min \{ \rho_u, g_{\rm max} \}.$ 
Then, $\hat{\rho}_u\in H_g^1(\Omega)$, $\hat{\rho}_u  \ge 0$ a.e.\ in $\Omega$, and 
$I_u[\hat{\rho}_u ] \le I_u[\rho_u].$ Hence, by the uniqueness of a minimizer of $I_u$ over 
$H_g^1(\Omega)$ and the fact that $\rho_u \ge 0$ a.e.\ in $\Omega,$ we have 
$0 \le \rho_u = \hat{\rho}_u  \le  g_{\rm max}$ a.e.\ in $\Omega$
and $\rho_u\in L^\infty(\Omega).$

Let $\phi \in H_0^1(\Omega)\cap L^\infty(\Omega)$. Since $\rho_u \ge 0$ a.e.\ in $\Omega$,
there exists $t_\phi >0$ such that $\rho_u + t\phi \ge -1/2 $
a.e.\ in $\Omega$ if $|t| \le t_\phi$. 
Since $\rho_u + t \phi \in H_g^1(\Omega)$ and hence $I_u[\rho_u] \le I_u[\rho_u + t \phi]$
$(|t|\le t_\phi)$, we have 
$({d}/{dt})|_{t=0} I_u [ \rho_u + t \phi ] = 0.$
For the linear case, we have 
\[
F(\rho_u + t \phi) - F (\rho_u) = t \rho_u \phi + \frac12 \phi^2 t^2 \qquad \mbox{if } |t| \le t_\phi.
\]
For the nonlinear case, we have 
\[
|F(\rho_u + t\phi) - F(\rho_u) | \le |t| |\phi| \qquad \mbox{if } |t| \le t_\phi,
\]
since by \eqref{Fpf} 
$|F'(s)| = |f(s)|\le 1$ for $ s \ge -1/2.$ Now, by routine calculations
using the Lebesgue Dominated Convergence Theorem, we obtain \eqref{LinftyH01} with $\phi
\in H_0^1(\Omega)\cap L^\infty(\Omega)$. Since $\rho_u \in L^\infty(\Omega)$ as shown above
and hence $u f(\rho_u) \in L^2(\Omega)$, and $H_0^1(\Omega)\cap L^\infty(\Omega)$
is dense in $H_0^1(\Omega)$, we obtain \eqref{LinftyH01} with $\phi \in H_0^1(\Omega).$

We now show that the nonnegative function $\rho \in H_g^1(\Omega)$ that satisfies \eqref{LinftyH01} 
(with $\rho$ replacing $\rho_u)$ is unique. 
Suppose $\tilde{\rho}_u\in H_g^1(\Omega)$ and $\tilde{\rho}_u \ge 0$ a.e.\ 
in $\Omega$ also satisfies
\eqref{LinftyH01}. Let $\eta_u = \rho_u- \tilde{\rho}_u\in H_0^1(\Omega).$ Then, for any $\phi \in H_0^1(\Omega)$, we have 
\begin{equation*}
\int_\Omega \left[\nabla \eta_u \cdot \nabla \phi + \sigma \eta_u \phi + u ( f(\rho_u) - f(\tilde{\rho}_u)) \phi \right] dx = 0. 
\end{equation*}
Setting $\phi = \eta_u$, we have
\begin{equation*}
\int_\Omega \left[ |\nabla \eta_u|^2 + \sigma \eta_u^2 + u (f (\rho_u) - f(\tilde{\rho}_u)) \eta_u \right] \, dx=0.
\end{equation*}
Since $u (f (\rho_u) - f(\tilde{\rho}_u)) \eta_u \ge 0$ for both the linear and nonlinear cases, each term in the integrand above is thus nonnegative, and the integral vanishes. Consequently, $\eta_u = 0$ in $H_0^1(\Omega)$ and $\tilde{\rho}_u = \rho_u$ in $H_g^1(\Omega). $

Finally, since $0\le f(\rho_u) \le f( \| \rho_u \|_{L^\infty(\Omega)})= f(g_{\rm max})$ in $\Omega$ and 
$\rho_u$ is a weak solution to \eqref{BVP4rho} (cf.\ \eqref{LinftyH01}), 
we have $\rho_u \in H^2(\Omega) $ by the regularity result (cf.\ Theorem~8.12 in \cite{GilbargTrudinger98}).
As $0 \le \rho_u \le g_{\rm max}$ a.e. in $\Omega$ by above, and $H^2(\Omega) \hookrightarrow C(\overline{\Omega})$, we have $0 \le \rho_u \le g_{\rm max}$ in $\Omega$. Now, suppose $\rho_u(x_0)=g_{\rm max}$ for some $x_0 \in \Omega$. Then, we can pick a ball $B \subset \Omega$ containing $x_0$ such that 
$
\sup_B \rho_u=\sup_\Omega \rho_u=g_{\rm max} > 0. 
$
This implies that  $\rho_u$ is constant in $\Omega$ by the Strong Maximum Principle 
applied to $\Delta - \sigma$ (cf.\ Theorem~8.19 in \cite{GilbargTrudinger98}). 
By continuity, $\rho_u \equiv g_{\rm max}$ in $\Omega$. But then \eqref{BVP4rho} becomes
$\sigma g_{\rm max}+uf(g_{\rm max})=0 $ in $\Omega$, 
a clear contradiction since the left-hand side of the equation is strictly positive. Thus, $0 \le \rho_u < g_{\rm max}$ in $\Omega$.

Suppose for $u \in L^2_+(\Omega)$ and $u \ne 0 $ in $L^2(\Omega)$, we have $| \{ \rho_u > 0 \} \cap \{ u > 0 \} | = 0. $ Then, $\rho_u = 0$ a.e.\ 
in $\{ u > 0 \}.$ Hence, it follows from \eqref{BVP4rho} with $\rho_u $ replacing 
$\rho$ that 
\begin{equation*}
    \left\{
    \begin{aligned}
        & - \Delta \rho_u + \sigma \rho_u  = 0 \quad & &\mbox{in } \Omega, \\
        & \rho_u = g \quad & & \mbox{on } \partial \Omega. 
    \end{aligned}
    \right. 
\end{equation*}
Note that $\rho_u \ge 0$ a.e.\ in $\Omega$. Since $\rho_u = 0$ a.e.\ in $\{ u > 0 \}$ and 
$| \{ u > 0 \} | > 0$, $\rho_u$ attains its minimum value in $\Omega$.  The Strong Maximum Principle 
(cf.\ Theorem~3.5 of \cite{GilbargTrudinger98})  implies that $\rho_u$ is a constant in $\Omega$. 
This constant is $0$, since $\rho_u = 0$ a.e.\ in $\{ u > 0 \}.$ This contradicts the 
fact that $\rho_u = g$ on $\partial \Omega$ as $\| g \|_{L^\infty(\partial \Omega)} = g_{\rm max} > 0.$
\end{proof}

We denote 
\begin{equation} 
\label{Tgu}
\cT_g(u) = \rho_u \qquad \forall u \in L^2_+(\Omega),
\end{equation} 
where $\rho_u$ is defined as in the above theorem. 
The theorem implies that the nonlinear operator 
$\cT_g$ maps $L^2_+(\Omega)$ to $H_g^1(\Omega)\cap H^2(\Omega). $
We define the reaction functional $J: L^2_+(\Omega) \to [0, \infty]$ by
\eqref{Ju} with $\rho_u = \cT_g(u)$.
The following lemma indicates that the operator $\cT_g: L^2_+(\Omega) \to H_g^1(\Omega)$
is continuous and that $J[u]$ for any $u \in \cE$ can be approximated by 
$J[u_k]$ with $u_k \in \cE \cap L^\infty(\Omega)$ $(k=1, 2, \dots)$.

\begin{lemma}
    \label{l:rhouk}
    Let $u \in L^2_+(\Omega)$ and $\rho_u = \cT_g(u)$. 
    \begin{compactenum}
        \item[{\rm (1)}]
If $u_k \in L^2_+(\Omega)$ $(k = 1, 2, \dots)$ and 
$u_k \to u $ in $L^2(\Omega)$, then $\rho_{u_k}:=\cT_g(u_k) \to \rho_u$ in $H^1(\Omega)$ and 
$J[u_k]\to J[u]$ as $k \to \infty.$
\item[{\rm (2)}]
If $u_k = \min\{ u, k \}\in L^\infty(\Omega)$ $(k = 1, 2, \dots)$, then $0 \le u_k \nearrow u$ 
a.e.\  in $\Omega$ and $u_k \to u$ in $L^2(\Omega)$, 
$\rho_{u_k} \to \rho_u$ in $H^1(\Omega)$, and $J[u_k] \to J[u]$ as $k \to \infty.$ 
        \item[{\rm (3)}]
If in addition $u \in \cE, $ then there exist $u_k \in \cE\cap L^\infty(\Omega)$ $(k=1,2,\dots)$ such that 
$u_k \to u$ in $L^2(\Omega)$ and  a.e.\  in $\Omega,$
$\rho_{u_k} \to \rho_u$ in $H^1(\Omega)$, and $J[u_k] \to J[u]$ as $k \to \infty.$ 
    \end{compactenum}
\end{lemma}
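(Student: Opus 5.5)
For part (1), the plan is to subtract the weak formulations \eqref{LinftyH01} for $\rho_{u_k}$ and $\rho_u$ and test with $\eta_k := \rho_{u_k} - \rho_u \in H_0^1(\Omega)$. Splitting $u_k f(\rho_{u_k}) - u f(\rho_u) = u_k\bigl(f(\rho_{u_k}) - f(\rho_u)\bigr) + (u_k - u) f(\rho_u)$, we get
\[
\int_\Omega \left[ |\nabla \eta_k|^2 + \sigma \eta_k^2 + u_k \bigl(f(\rho_{u_k}) - f(\rho_u)\bigr)\eta_k \right] dx = -\int_\Omega (u_k - u) f(\rho_u)\, \eta_k \, dx .
\]
The third term on the left is nonnegative by the monotonicity of $f$, exactly as in the uniqueness part of Theorem~\ref{t:rhou}. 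By Theorem~\ref{t:rhou}, $0 \le f(\rho_u) \le f(g_{\rm max})$. Hence the right-hand side is bounded by $f(g_{\rm max}) \|u_k - u\|_{L^2} \|\eta_k\|_{L^2}$. This gives
\[
\min\{1,\sigma\}\,\|\eta_k\|_{H^1}^2 \le f(g_{\rm max})\,\|u_k-u\|_{L^2}\,\|\eta_k\|_{H^1},
\]
so $\|\eta_k\|_{H^1} \le C\|u_k-u\|_{L^2} \to 0$.

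For $J$, write
\[
J[u_k] - J[u] = \int_\Omega (u_k-u) f(\rho_{u_k})\,dx + \int_\Omega u\bigl(f(\rho_{u_k}) - f(\rho_u)\bigr)\,dx .
\]
The first integral is at most $f(g_{\rm max})|\Omega|^{1/2}\|u_k-u\|_{L^2}$, using the uniform bound $0\le \rho_{u_k} \le g_{\rm max}$. For the second, $f$ is Lipschitz on $[0,\infty)$ with constant $1$ in both cases. So it is at most $\|u\|_{L^2}\|\rho_{u_k}-\rho_u\|_{L^2}$, which tends to $0$ by the first step.

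For part (2), $u_k = \min\{u,k\}$ is nonnegative, bounded, and increases pointwise to $u$ wherever $u<\infty$, i.e.\ a.e. Since $|u_k - u|^2 \le u^2 \in L^1$, dominated convergence gives $u_k \to u$ in $L^2(\Omega)$. The remaining claims then follow from part (1).

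For part (3), take $\tilde u_k = \min\{u,k\}$ as in part (2) and renormalize the mass by $u_k := (M_{\rm e}/m_k)\,\tilde u_k$, where $m_k = \int_\Omega \tilde u_k\,dx$. By monotone convergence $m_k \nearrow M_{\rm e} > 0$, so $m_k > 0$ for large $k$; discard the finitely many earlier indices. Then $u_k \in \cE \cap L^\infty(\Omega)$, $u_k \to u$ a.e., and
\[
\|u_k - u\|_{L^2} \le |M_{\rm e}/m_k - 1|\,\|\tilde u_k\|_{L^2} + \|\tilde u_k - u\|_{L^2} \to 0 .
\]
Part (1) then gives the convergence of $\rho_{u_k}$ and of $J[u_k]$.

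The only real step is the energy estimate in part (1), and its key ingredient is the sign of the term $u_k\bigl(f(\rho_{u_k})-f(\rho_u)\bigr)\eta_k$. The care needed there is that all $\rho$'s are nonnegative, so that $f$ is monotone and Lipschitz on their range. This holds by Theorem~\ref{t:rhou}.
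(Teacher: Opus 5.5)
Your proof is correct. Its core is the same energy identity the paper uses: test the difference of the weak forms with $\eta_k=\rho_{u_k}-\rho_u$ and discard the nonnegative monotone term.

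The routes differ in part (1). You absorb $\|\eta_k\|_{L^2}\le\|\eta_k\|_{H^1}$ immediately and obtain the quantitative bound $\|\rho_{u_k}-\rho_u\|_{H^1(\Omega)}\le \bigl(f(g_{\rm max})/\min\{1,\sigma\}\bigr)\|u_k-u\|_{L^2(\Omega)}$. So $\cT_g$ is globally Lipschitz from $L^2_+(\Omega)$ into $H^1(\Omega)$; this is the same estimate the paper later derives as \eqref{rtr}. The paper instead proceeds in stages. It first gets a uniform $H^1$ bound by testing with $\rho_{u_k}-g$ and extracts a weakly convergent subsequence. It then identifies the limit as $\rho_u$ through uniqueness of the nonnegative weak solution. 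Only after that does it use the energy identity, together with $\eta_k\to0$ in $L^2(\Omega)$ and a subsequence-of-subsequence argument. Under the stated hypothesis $u_k\to u$ strongly, that compactness step is unnecessary, and your version is shorter and quantitative. What the paper's structure buys is robustness: it only needs $\sup_k\|u_k\|_{L^2}<\infty$ and weak convergence to identify the limit and make the right-hand side vanish. It would therefore also cover $u_k\rightharpoonup u$ weakly in $L^2(\Omega)$, where a Lipschitz bound gives nothing.

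Your split of $J[u_k]-J[u]$ differs from the paper's only in which factor carries the difference. In part (3) you renormalize multiplicatively, $u_k=(M_{\rm e}/m_k)\min\{u,k\}$, whereas the paper adds the constant $\delta_k=|\Omega|^{-1}\int_{\{u>k\}}(u-k)\,dx$. Both work, and yours keeps the support $\{u>0\}$ unchanged. Incidentally, $m_k>0$ for every $k$, since $\min\{u,k\}>0$ on the positive-measure set $\{u>0\}$, so no indices need to be discarded.
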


\begin{proof}
    (1) For each $k \ge 1$, we have 
by \eqref{LinftyH01} with $u_k$ replacing $u$ that 
\begin{equation}
\label{rhouk}
\int_\Omega\left[ \nabla \rho_{u_k} \cdot \nabla \phi + \sigma \rho_{u_k} \phi
+ u_k f(\rho_{u_k}) \phi \right] dx = 0 \qquad \forall \phi \in H_0^1(\Omega). 
\end{equation}
Setting $\phi = \rho_{u_k} - g\in H_0^1(\Omega)$ and noting 
that $u_k f(\rho_{u_k}) \rho_{u_k} \ge 0$ a.e.\  in $\Omega$, we obtain 
\[
\min \{ 1, \sigma \} \| \rho_{u_k} \|^2_{H^1(\Omega)} 
\le \int_\Omega \left[ \nabla \rho_{u_k} \cdot \nabla g + \sigma \rho_{u_k} g
+ u_k f(\rho_{u_k} ) g  \right] dx.
\]
Since $ \sup_{k \ge 1} \|u_k \|_{L^2(\Omega)} < \infty$ and $0 \le f(\rho_{u_k}) \le  \rho_{u_k}$, we further obtain
$\sup_{k \ge 1} \| \rho_{u_k} \|_{H^1(\Omega)}< \infty$. Thus, for any given subsequence
of $\{ u_k \}$, there exists a further subsequence, not relabeled, such that $\rho_{u_k} \to \rho$ weakly in $H^1(\Omega)$, 
strongly in $L^2(\Omega),$ and a.e.\ in $\Omega$ for some $\rho \in H_g^1(\Omega).$
Sending $k \to \infty$ in \eqref{rhouk} with $\phi \in H_0^1(\Omega)\cap L^\infty(\Omega)$, 
we infer that $\rho$ 
satisfies \eqref{LinftyH01} with $\rho$ replacing $\rho_u$ and $\phi \in H_0^1(\Omega)\cap L^\infty(\Omega).$
Since $0 \le \rho_{u_k} \le g_{\rm max}$ in $\Omega$ for each $k \ge 1,$ we have 
$0\le \rho \le g_{\rm max}.$
Hence, \eqref{LinftyH01} holds true with $\rho$ replacing $\rho_u$
and for all $\phi \in H_0^1(\Omega).$ The uniqueness of a nonnegative weak solution 
defined by \eqref{LinftyH01} now implies that $\rho = \rho_u. $

Denoting $\eta_k = \rho_{u_k} - \rho_u \in H_0^1(\Omega)$,  
setting $\phi = \rho_{u_k} - \rho_u\in H_0^1(\Omega)$ in \eqref{LinftyH01} and \eqref{rhouk}, and subtracting
one from the other, we get
\[
\int_\Omega \left[ |\nabla \eta_k|^2 + \sigma \eta^2_k + u_k \left(  f(\rho_{u_k}) -  f(\rho_u) \right) \eta_k
\right]dx = -\int_\Omega (u_k - u) f(\rho_u) \eta_k\, dx .
\]
Note that 
\[
u_k \left(  f(\rho_{u_k}) -  f(\rho_u) \right) \eta_k =
\left\{
\begin{aligned}
    & 
u_k \eta_k^2\ge 0 \quad & \mbox{a.e.\ in }\,  \Omega,  & \quad \mbox{for the linear case,}
\\
& \frac{ u_k \eta_k^2 }{(\rho_{u_k}+1)(\rho_u + 1)}
\ge 0 \quad  & \mbox{a.e.\ in }\, \Omega, &  \quad \mbox{for the nonlinear case.}
\end{aligned}
\right.
\]
Since $u_k \to u$ and $\eta_k \to 0$ in $L^2(\Omega)$, and $0 \le f(\rho_u) \le f( g_{\rm max})$
on $\Omega$, we see in both cases that $\eta_k \to 0$, i.e., $\rho_{u_k} \to \rho_u$ in $H^1(\Omega)$. 
This is true for a subsequence taken from any subsequence of $\{ u_k \}$. We 
thus conclude that the original
sequence $\{ \rho_{u_k} \}$ converges to $\rho_u$ in $H^1(\Omega).$
Finally, 
\begin{align*}
 \left|J[u_k]-J[u]\right|&=\left|\int_{\Omega} [u_k f(\rho_{u_k}) -uf(\rho_u) ]\, dx\right|\\
 &\le \int_{\Omega} |u_k| |f(\rho_{u_k}) -f(\rho_u) |\, dx +  \int_{\Omega} |u_k-u| |f(\rho_u) |\, dx \\
 & \le \|u_k\|_{L^2(\Omega)}\|\rho_{u_k}-\rho_u\|_{L^2(\Omega)} + \|u_k-u\|_{L^2(\Omega)}\|f(\rho_u)\|_{L^2(\Omega)} \\
 & \to 0
\end{align*}

(2) It follows from the definition of $u_k$ $(k=1,2,\dots) $ that  $0 \le u_k \nearrow u$ 
a.e.\  in $\Omega$ and $u_k \to u$ in $L^2(\Omega)$. The rest follows from Part (1).  

    (3) For each $ k \ge 1$, we define $u_k = \min \{ u, k \} + \delta_k,$ where
    \[
    \delta_k =  \frac{1}{|\Omega|}\int_{ \{ u > k \} } ( u - k)\,  dx \searrow 0 \qquad \mbox{as } k \to \infty. 
    \]
Clearly, $u_k \in \cE\cap L^\infty(\Omega)$ and $u_k \to u$ in $L^2(\Omega)$ and also 
a.e.\ in $\Omega$ as $k \to \infty.$ The rest follows from Part (1). 
\end{proof}

The next result concerns the normal derivative for a Sobolev function.

\begin{lemma}
\label{l:wnu}
Suppose $w \in H^2(\Omega)\cap H_0^1(\Omega)$ and $w \ge 0 $ a.e.\ in $\Omega$. Then $\partial_\nu w \le 0$ 
a.e.\ on $ \partial \Omega,$
where $\nu$ denotes the outward unit normal on $\partial \Omega$.
\end{lemma}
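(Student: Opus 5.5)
The plan is to prove the sign condition by a Green's identity argument against suitable nonnegative test functions. This avoids any pointwise difference-quotient argument at the boundary, which would be delicate since $\partial_\nu w$ is only an $H^{1/2}(\partial\Omega)$ trace.

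Fix any $\varphi \in C^1(\overline{\Omega})$ with $\varphi \ge 0$. Since $w \in H^2(\Omega)$ and $\partial\Omega$ is $C^2$, the divergence theorem for $H^1$ vector fields applied to $\varphi \nabla w$ gives
\[
\int_{\partial\Omega} \varphi\, \partial_\nu w \, dS = \int_\Omega \left( \nabla \varphi\cdot \nabla w + \varphi\, \Delta w \right) dx .
\]
I would not use this identity directly, because $\Delta w$ has no sign. Instead I will localize in the normal direction with a cutoff. Let $d(x) = \dist(x,\partial\Omega)$, which is $C^2$ in a collar $\{d < \delta_0\}$ by (A1). For small $\delta$, take $\chi_\delta(x) = \max\{0, 1 - d(x)/\delta\}$; this is Lipschitz, equals $1$ on $\partial\Omega$, and vanishes for $d \ge \delta$. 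Apply the identity with $\varphi\chi_\delta$ in place of $\varphi$. This is justified because Lipschitz test functions are allowed when $w\in H^2$.

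The term $\int_\Omega \varphi\chi_\delta \Delta w\,dx$ tends to $0$ as $\delta\to0$, since $|\{d<\delta\}|\to 0$ and $\Delta w\in L^2$. Similarly, $\int \chi_\delta \nabla\varphi\cdot\nabla w \to 0$. What remains is
\[
\int_{\partial\Omega}\varphi\,\partial_\nu w\,dS = \lim_{\delta\to0} \int_\Omega \varphi \nabla\chi_\delta\cdot\nabla w\,dx = \lim_{\delta\to 0}\left(-\frac1\delta\int_{\{d<\delta\}} \varphi\,\nabla d\cdot\nabla w\,dx\right).
\]
Since $\nabla d = -\nu$ near the boundary (extended inward), this is an averaged inward-normal derivative. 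To conclude that it is $\le 0$, I would integrate by parts once more in the layer, moving the derivative off $w$:
\[
-\frac1\delta\int_{\{d<\delta\}}\varphi \nabla d\cdot\nabla w\,dx = \frac1\delta\int_{\{d<\delta\}} w\,\operatorname{div}(\varphi\nabla d)\,dx - \frac1\delta\int_{\{d=\delta\}} \varphi\, w\, dS.
\]
There is no contribution from $\partial\Omega$ because $w=0$ there. The last term is $\le 0$ because $w\ge0$ and $\varphi \ge 0$. The first term is bounded by $C\delta^{-1}\int_{\{d<\delta\}}|w|\,dx$. Since $w\in H_0^1$, Hardy's inequality (or simply the trace estimate $\int_{\{d=t\}} |w|\,dS \le C t^{1/2}\|w\|_{H^1}$, which follows from $w$ vanishing on $\partial\Omega$) shows that this tends to $0$. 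Along a.e. $\delta$, the layer coarea formula handles the level-set integral.

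Hence $\int_{\partial\Omega}\varphi\,\partial_\nu w\,dS\le 0$ for every nonnegative $\varphi\in C^1(\overline\Omega)$. Restrictions of such $\varphi$ are dense among nonnegative functions in $L^2(\partial\Omega)$, and $\partial_\nu w\in L^2(\partial\Omega)$, so $\partial_\nu w\le0$ a.e. on $\partial\Omega$.

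The main obstacle I expect is the technical justification in the collar. This includes the regularity of $d$ and of the level sets $\{d=\delta\}$, the use of coarea, and the decay estimate $\delta^{-1}\int_{\{d<\delta\}}|w|\to 0$. A fallback that avoids all of this is to approximate $w$ in $H^2$ by smooth functions. Those approximants need not remain nonnegative, however, which is exactly why I prefer the integration-by-parts route above.
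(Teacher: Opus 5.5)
\textbf{Verdict.} Your argument is correct, but it takes a genuinely different route from the paper.

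\textbf{Your route.} You work by duality. You test $\partial_\nu w$ against nonnegative $\varphi$ and apply Green's formula with the boundary-layer cutoff $\varphi\chi_\delta$. You then integrate by parts a second time on $\{d<\delta\}$. In this way the sign of $w$ enters only through the surface term $-\delta^{-1}\int_{\{d=\delta\}}\varphi w\,dS\le 0$. The only additional analytic input is the layer decay $\delta^{-1}\int_{\{d<\delta\}}|w|\,dx\to 0$ for $w\in H_0^1(\Omega)$, via Hardy's inequality or your parallel-surface trace bound. Since $w\in H^2(\Omega)\hookrightarrow C(\overline{\Omega})$, that surface integral is defined for every small $\delta$, so you do not need coarea there.

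\textbf{The paper's route.} The paper uses precisely the difference-quotient argument you chose to avoid. With $F(x,s)=x-s\nu(x)$, it shows that $T_t\phi(x)=(\phi(F(x,t))-\phi(x))/t$ satisfies $\|T_t\phi\|_{L^2(\partial\Omega)}\le C\|\phi\|_{H^2(\Omega)}$ uniformly in $t$. This rests on a trace inequality on the parallel surfaces $\Sigma_s$ that is uniform in $s$. By density of smooth functions it then obtains $T_tw\to\partial_{\hat\nu}w$ in $L^2(\partial\Omega)$. The sign is read off from $T_tw=w(F(x,t))/t\ge 0$ along a subsequence converging a.e.

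This also answers the worry in your last paragraph. Smooth approximation is used only to prove the \emph{linear} convergence statement. The sign is applied to $T_tw$ for the actual $w$, which is continuous, nonnegative, and zero on $\partial\Omega$. So it does not matter that the approximants may fail to be nonnegative.

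\textbf{What each approach gives you.}
\begin{itemize}
\item The paper's proof yields the stronger fact that $\partial_\nu w$ is the $L^2(\partial\Omega)$-limit of one-sided normal difference quotients.
\item Yours avoids the uniform trace estimates on $\Sigma_s$.
\item Yours essentially uses only $w\in H_0^1(\Omega)$ with $\Delta w\in L^2(\Omega)$ and $w\ge 0$. It would therefore carry over to settings where $\partial_\nu w$ is defined only weakly through Green's formula. There the conclusion would be that $\partial_\nu w$ is a nonpositive functional.
\end{itemize}
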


\begin{proof}
Let $\hat{\nu}:=-\nu$ denote the inward unit normal on $\partial \Omega.$
For $x \in \partial \Omega$ and $s \ge 0$ small enough, we define 
\[
F(x, s) = x + s \hat{\nu}(x) = x - s \nu(x). 
\]
Since $\partial \Omega$ is $C^2$, $\nu(x)$ is $C^1$. By the inverse function theorem and compactness of $\partial \Omega$, there exists $\ve>0$ such that
the mapping 
$F:\partial \Omega \times (0,\ve)\to \Omega_\ve$ 
is a $C^1$-diffeomorphism onto $
\Omega_\ve:=\{ y\in \Omega:\dist(y,\partial \Omega)<\ve\}.$ Since every point $y\in\Omega_\ve$ has a unique representation $y=F(x,s)$ with $x\in\partial\Omega$ and $0<s<\ve$, we extend $\hat{\nu}$ to $\Omega_\ve$ by keeping it constant along inward normal rays; namely, for $y=F(x,s)$, we set $\hat{\nu}(y):=\hat{\nu}(x).$ With this convention, $\partial_{s} F(x,s)=\hat{\nu} (x)=\hat{\nu}(F(x,s)).$ We also set $ \Sigma_s = F (\partial \Omega \times \{ s \})$ if  $0<s<\ve$. 
 
For any $\phi \in C(\overline{\Omega})$ 
and $0 < t < \ve$, we define 
\[
T_t\phi(x)=\frac{\phi(F(x,t))-\phi(F(x,0))}{t} =\frac{\phi(F(x,t))-\phi(x)}{t}\qquad \forall x\in\partial \Omega.
\]
Assume $\phi\in C^\infty(\overline{\Omega}).$ 
For $x\in\partial\Omega$ and  $0<t<\ve$, we have by the fundamental theorem of calculus that
\[
T_t \phi(x)
=\frac{1}{t}\int_0^t \frac{d}{ds}\bigl[\phi (F(x,s))\bigr]\,ds
=\frac{1}{t}\int_0^t \partial_{\hat{\nu}}\phi(F(x,s))\,ds,
\]
where $\partial_{\hat{\nu}}\phi (F(x,s)):=\nabla \phi (F(x,s))\cdot \hat{\nu}(F(x,s))$. 
Hence, by the Cauchy--Schwarz inequality,
\[
|T_t \phi (x)|^2
\le \frac1t\int_0^t |\partial_{\hat{\nu}} \phi (F(x,s))|^2\,ds.
\]
Integrating over $\partial\Omega$ and applying Fubini's theorem, we obtain
\[
\|T_t \phi \|_{L^2(\partial\Omega)}^2
\le \frac1t\int_0^t \left(\int_{\partial\Omega}
|\partial_{\hat{\nu}}\phi (F(x,s))|^2\,dS_x\right)\,ds.
\]
Moreover, the surface Jacobians $J_s$ of the maps $F(\cdot,s):\partial \Omega \to \Sigma_s$
depend continuously on $(x,s)\in \partial\Omega\times[0,\ve)$ and satisfy $J_0=1$. Hence, after possibly decreasing $\ve$, there exist constants $0<m<M<\infty$, independent of $s$, such that 
$m\le J_s(x)\le M $ for all $x \in \partial \Omega$ and all $s \in (0, \ve).$
Thus,
with the change of variable $y = F(x, s)$ $(x\in \partial\Omega)$, 
\[ 
\int_{\partial\Omega} |\partial_{\hat{\nu}}\phi(F(x,s))|^2\,dS_x = \int_{\Sigma_s} |\partial_{\hat{\nu}}\phi(y)|^2 \frac{1}{J_s(F(\cdot,s)^{-1}(y))}\,dS_y \le \frac1m \int_{\Sigma_s} |\partial_{\hat{\nu}}\phi(y)|^2\,dS_y. 
\]
Since the maps $F(\cdot,s):\partial\Omega\to\Sigma_s$ are $C^1$-diffeomorphisms and depend continuously on $s$ in the $C^1$-topology, the $C^1$-norms of these maps and of their inverses are bounded uniformly for $0<s<\ve$. Since $\partial\Omega$ is compact, we may choose finitely many coordinate charts covering $\partial\Omega$. Under the maps $F(\cdot,s)$, these charts give finitely many coordinate charts covering $\Sigma_s$, and in each such chart $\Sigma_s$ is represented as a graph 
of a $C^1$-function whose 
$C^1$-norm is bounded by a constant independent of $s$. Hence, 
the usual trace theorem on Lipschitz graphs gives a constant $c>0$, independent of $s$, 
such that 
\[
\|v\|_{L^2(\Sigma_s)}
\le c\|v\|_{H^1(\Omega)}
\qquad \forall v\in H^1(\Omega),\quad 0<s<\ve.
\]
Applying this estimate component-wise to $v=\partial_i\phi$, $i=1,2,3$, and summing gives
\[
\|\nabla\phi\|_{L^2(\Sigma_s)}
\le c\|\nabla\phi\|_{H^1(\Omega)}
\le c\|\phi\|_{H^2(\Omega)}.
\]
Therefore
\[
\|\partial_{\hat{\nu}}\phi\|_{L^2(\Sigma_s)}
\le c\|\phi\|_{H^2(\Omega)}
\quad 0<s<\ve.
\]
Consequently,
\[
\|T_t\phi \|_{L^2(\partial\Omega)}^2
\le \frac{1}{mt}\int_0^t \|\partial_{\hat{\nu}}\phi \|_{L^2(\Sigma_s)}^2\,ds
\le \frac{c^2}{mt}\int_0^t \|\phi \|_{H^2(\Omega)}^2\,ds
\le C\|\phi \|_{H^2(\Omega)}^2,
\]
with $C=c^2/m$. 
By the density of $C^{\infty}(\overline{\Omega})$ in $H^2(\Omega)$, $T_t$ extends uniquely to a bounded linear map
\begin{equation}
\label{Tt}
T_t:H^2(\Omega)\to L^2(\partial \Omega)\quad \mbox{with} \quad 
\|T_t\phi\|_{L^2(\partial \Omega)}\le \sqrt{C}\|\phi \|_{H^2(\Omega)} \qquad \forall \phi \in H^2(\Omega).
\end{equation}

Now let $w_k\in C^\infty(\overline{\Omega})$ be such that $w_k\to w$ in $H^2(\Omega)$. For each fixed $k \ge 1$, 
since $\| w_k \|_{C^2(\overline{\Omega})} < \infty$, we have by Taylor's expansion
that $T_tw_k\to \partial_{\hat{\nu}}w_k$
uniformly on $\partial \Omega$ as $t\searrow 0.$
 Also, $\partial_{\hat{\nu}}w_k\to \partial_{\hat{\nu}}w$ in $L^2(\partial \Omega).$ Therefore,
by \eqref{Tt} with $\phi$ replaced by $w - w_k$, we have
\begin{align*}
\|T_tw-\partial_{\hat{\nu}}w\|_{L^2(\partial \Omega)}
&\le
\|T_t (w-w_k)\|_{L^2(\partial\Omega)}
+\|T_tw_k-\partial_{\hat{\nu}}w_k\|_{L^2(\partial \Omega)}
+\|\partial_{\hat{\nu}}(w_k-w)\|_{L^2(\partial \Omega)}
\\
&\le
\sqrt{C}\|w-w_k\|_{H^2(\Omega)}
+\|T_tw_k-\partial_{\hat{\nu}}w_k\|_{L^2(\partial \Omega)}
+\|\partial_{\hat{\nu}}(w_k-w)\|_{L^2(\partial \Omega)}.
\end{align*}
Taking $\limsup$ as $t\searrow 0$ and then letting $k\to\infty$, we obtain $T_tw\to \partial_{\hat{\nu}}w$ in $L^2(\partial \Omega)$. 
Passing to a subsequence $t_n\searrow 0$, we have  
$T_{t_n}w\to \partial_{\hat{\nu}}w$ a.e.\ on $\partial \Omega$. 
Finally, since $w \in C(\overline{\Omega})$, $w=0$ on $\partial \Omega$, and $w\ge 0$ in $\overline{\Omega}$, we have
\[
T_{t_n} w(x)=\frac{w(F(x,t_n))-w(x)}{t_n}=\frac{w(F(x,t_n))}{t_n}\ge 0
\qquad \forall x\in\partial \Omega,\ n \ge 1. 
\]
Therefore, 
 we conclude that $\partial_{\hat{\nu}}w\ge 0$ a.e. on $\partial \Omega,$ and $ \partial_\nu w=-\partial_{\hat{\nu}}w\le 0$
a.e. on $\partial \Omega$. 
\end{proof}

The following theorem is itself of interest and will also be used later:

\begin{theorem}
    \label{t:uv}
    Let $u, v \in L^2_+(\Omega)$ be such that $0 \le u \le v$ a.e.\  in $\Omega.$
Let $\rho_u = \cT_g(u), \rho_v = \cT_g(v) \in H_g^1(\Omega)\cap H^2(\Omega)$ be given as in Theorem~\ref{t:rhou}. Then, 
$\rho_u \ge \rho_v$  in $\Omega$, $\partial_\nu \rho_u \le \partial_\nu\rho_v$ a.e.\
on $\partial \Omega$,
where $\nu$ is the unit normal at $\partial \Omega$ exterior to 
$\Omega$, and $J[u] \le J[v]$.
\end{theorem}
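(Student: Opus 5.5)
The plan is to prove the three assertions in the order stated: the comparison $\rho_u \ge \rho_v$ comes from a weak maximum principle argument, the normal-derivative inequality from Lemma~\ref{l:wnu}, and $J[u]\le J[v]$ from integrating the PDE over $\Omega$. The last step converts $J$ into boundary and bulk terms whose signs are controlled by the first two assertions.

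\emph{Step 1 (comparison).} Both $\rho_u$ and $\rho_v$ lie in $H_g^1(\Omega)$, so $\phi := (\rho_v - \rho_u)^+ \in H_0^1(\Omega)$. I will test \eqref{LinftyH01} for $\rho_v$ (with weight $v$) and for $\rho_u$ (with weight $u$) against this $\phi$ and subtract. This gives
\[
\int_\Omega \left[ |\nabla \phi|^2 + \sigma \phi^2 + \bigl(v f(\rho_v) - u f(\rho_u)\bigr)\phi \right] dx = 0 .
\]
On $\{\phi > 0\}$ we have $\rho_v > \rho_u \ge 0$. Since $f$ is nondecreasing and nonnegative on $[0,\infty)$ in both the linear and nonlinear cases, $f(\rho_v)\ge f(\rho_u)\ge 0$ there. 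Combined with $v \ge u \ge 0$, this yields $v f(\rho_v) \ge u f(\rho_v) \ge u f(\rho_u)$ a.e.\ on $\{\phi>0\}$. The last integrand is therefore nonnegative, so $\int_\Omega (|\nabla\phi|^2 + \sigma\phi^2)\,dx \le 0$ and hence $\phi = 0$. Thus $\rho_u \ge \rho_v$ a.e., and by the continuity of both functions (Theorem~\ref{t:rhou}) this holds everywhere in $\Omega$.

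\emph{Step 2 (normal derivatives).} Set $w = \rho_u - \rho_v$. Then $w \in H^2(\Omega)$ by Theorem~\ref{t:rhou}, $w \in H_0^1(\Omega)$ since both functions have trace $g$, and $w \ge 0$ by Step 1. Lemma~\ref{l:wnu} gives $\partial_\nu w \le 0$ a.e.\ on $\partial\Omega$, which is $\partial_\nu \rho_u \le \partial_\nu \rho_v$.

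\emph{Step 3 (the functional).} Because $\rho_u \in H^2(\Omega)$, the weak formulation \eqref{LinftyH01} implies that $-\Delta \rho_u + \sigma \rho_u + u f(\rho_u) = 0$ holds a.e.\ in $\Omega$. Integrating this identity over $\Omega$ and applying the divergence theorem for $H^2$ functions gives
\[
J[u] = \int_\Omega u f(\rho_u)\,dx = \int_\Omega (\Delta \rho_u - \sigma \rho_u)\,dx = \int_{\partial\Omega} \partial_\nu \rho_u \, dS - \sigma \int_\Omega \rho_u \, dx .
\]
The same identity holds for $v$. By Step 2 the boundary term for $u$ is at most that for $v$, and by Step 1 the term $-\sigma\int_\Omega \rho_u\,dx$ is at most $-\sigma\int_\Omega\rho_v\,dx$. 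Hence $J[u] \le J[v]$.

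The main point needing care is the justification of the Green identity in Step 3 at $H^2$ regularity. It follows by approximating $\rho_u$ in $H^2(\Omega)$ by $C^\infty(\overline\Omega)$ functions and using continuity of the trace $\nabla \rho \mapsto \nabla\rho|_{\partial\Omega}$ from $H^1$ into $L^2(\partial\Omega)$. Alternatively, one can test \eqref{LinftyH01} with $\phi$ approximating $1$ and read off the boundary flux. The first two steps are routine.
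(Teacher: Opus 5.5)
Your proof is correct, and your Steps 2 and 3 match the paper's: Lemma~\ref{l:wnu} applied to $w=\rho_u-\rho_v$, then integrating the two equations and using the divergence theorem. Your Step 1 takes a different route.

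The paper first truncates $u$ to $u_k=\min\{u,k\}$. It writes $\eta_k=\rho_{u_k}-\rho_v$ as the solution of the linear problem $-\Delta\eta_k+\sigma\eta_k+u_k\alpha_k\eta_k=(v-u_k)f(\rho_v)\ge 0$, where $\alpha_k$ is the difference quotient of $f$. It then invokes the weak maximum principle of Gilbarg--Trudinger, which requires the bounded coefficient $u_k\alpha_k\in L^\infty(\Omega)$; this is why the truncation is needed. Finally it lets $k\to\infty$ using the continuity result Lemma~\ref{l:rhouk}.

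You instead test the difference of the two weak formulations directly with $(\rho_v-\rho_u)^+\in H_0^1(\Omega)$. This is legitimate: \eqref{LinftyH01} holds for every $\phi\in H_0^1(\Omega)$, and $\bigl(vf(\rho_v)-uf(\rho_u)\bigr)\phi\in L^1(\Omega)$ because $f(\rho_u)$ and $f(\rho_v)$ are bounded. In effect you run the Stampacchia proof of the maximum principle on the nonlinear equation itself.

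What your route buys: it works for $u\in L^2_+(\Omega)$ without truncation, and it needs neither Lemma~\ref{l:rhouk} nor an external linear theorem. It also uses only $\rho_u\ge 0$ and the fact that $f$ is nonnegative and nondecreasing on $[0,\infty)$, so it is shorter and slightly more general. What the paper's route buys: it reduces the comparison to a standard, citable linear maximum principle, at the cost of the approximation step.

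Your density justification of the Green identity for $H^2$ functions in Step 3 is sufficient. The alternative you mention, testing with $\phi$ approximating $1$, is not needed.
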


\begin{proof}
For each $k \ge 1,$ let $u_k = \min \{ u, k \}$, $\rho_{u_k} = \cT_g(u_k)$,  and 
 $\eta_k = \rho_{u_k} - \rho_v \in H_0^1(\Omega)\cap H^2(\Omega).$ 
By \eqref{BVP4rho} with $u$ replaced by $u_k$ and $v$, respectively, 
we get 
\begin{equation}
\label{uv}
-\Delta \eta_k + \sigma \eta_k + u_k \alpha_k \eta_k = (v-u_k) f(\rho_v) \qquad \mbox{a.e.\ in }\,  \Omega, 
\end{equation}
where $\alpha_k = 1$ for the linear case and $\alpha_k = 1/((\rho_{u_k}  + 1)(\rho_v+1))$ for the 
nonlinear case. Since $v\ge u \ge u_k\ge 0$, $f(\rho_{u_k}) \ge 0$, and $f(\rho_v) \ge 0$ a.e.\  in $\Omega,$ 
and since $u_k \alpha_k \in L^\infty(\Omega)$, it
follows from the Weak Maximum Principle (cf.\ Theorem~8.1 of \cite{GilbargTrudinger98})
applied to the operator $L = \Delta - \sigma - u_k \alpha_k $ that 
$\eta_k \ge 0,$ i.e., $\rho_{u_k} \ge \rho_v$ in $\Omega$. By Lemma~\ref{l:rhouk}, 
$\rho_{u_k} \to \rho_u$ in $H^1(\Omega)$.
Thus, $\rho_u \ge \rho_v$  in $\Omega.$ 

Set $w=\rho_u-\rho_v$. Then $w \in H^2(\Omega)\cap H_0^1(\Omega)$ and $w \geq0$ a.e. in $\Omega$. Thus, 
we have by Lemma~\ref{l:wnu} that
$\partial_\nu w \le 0$ a.e. on $\partial \Omega$. Hence, $\partial_\nu \rho_u \le \partial_\nu\rho_v$ a.e.\
on $\partial \Omega$.

It now follows from \eqref{BVP4rho} for $\rho = \rho_u$ and that for $\rho = \rho_v$, 
the fact that $ \rho_u \ge \rho_v $ in $\Omega$,  and the Divergence Theorem that 
\begin{align*}
J[u ] - J[v] &= \int_\Omega \left[ uf(\rho_u) - vf(\rho_v) \right]\,dx\\
&= \int_\Omega (\Delta \rho_u - \Delta \rho_v ) \, dx - \int_\Omega \sigma ( \rho_u - \rho_v ) \, dx 
\\
&  = \int_{\partial \Omega} (\partial_\nu \rho_u - \partial_\nu \rho_v)\, dS 
- \sigma \int_\Omega ( \rho_u - \rho_v)\, dx
\\
& \le 0. 
\end{align*}
Hence, $J[u] \le J[v].$ 
\end{proof}

\section{ 
Maximizing Sequences and Non-Existence of Maximizers}
\label{s:Local}

In this section, we show that the reaction functional $J: \cE \to \R$ 
does not admit a global maximizer. We
construct a reaction-maximizing family of localized concentrations 
$u_\ve \in \cE$ $(0 < \ve \ll 1)$ 
to obtain the supremum of the reaction functional $J$. To show the essence of 
such constructions, we first consider a one-dimensional setting and then the three-dimensional setting, which involves additional technical details.
Our constructions are only for the linear case. The supremum of 
the reaction functional $J$ for the nonlinear case is then obtained
by a comparison argument. Recall that $g_{\rm max} = \max_{x \in \partial\Omega} g(x) > 0$ as defined in \eqref{gmax}. 

\begin{theorem}
    \label{t:supJ}
    We have 
    $    \sup_{u\in \cE} J[u] = M_{\rm e} f(g_{\rm max} ). $
Moreover, the supremum is not attained in $\cE$.
\end{theorem}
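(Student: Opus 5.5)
The plan has two halves: the upper bound with strict inequality for every admissible $u$, and a maximizing sequence for the lower bound.

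\textbf{Upper bound and non-attainment.} Let $u\in\cE$. By Theorem~\ref{t:rhou}, $\rho_u\in C(\overline{\Omega})$ and $0\le\rho_u<g_{\rm max}$ in $\Omega$. Since $f$ is increasing on $[0,\infty)$ in both cases, this gives $f(\rho_u)<f(g_{\rm max})$ pointwise in $\Omega$. Hence
\[
J[u]=\int_\Omega u f(\rho_u)\,dx\le f(g_{\rm max})\int_\Omega u\,dx=M_{\rm e}f(g_{\rm max}).
\]
If equality held, then $\int_\Omega u\,[f(g_{\rm max})-f(\rho_u)]\,dx=0$ with a nonnegative integrand. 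Because $f(g_{\rm max})-f(\rho_u)>0$ everywhere in $\Omega$, this forces $u=0$ a.e. That contradicts $\int_\Omega u=M_{\rm e}>0$. So $J[u]<M_{\rm e}f(g_{\rm max})$ for every $u\in\cE$, and once the supremum is shown to equal $M_{\rm e}f(g_{\rm max})$, it is not attained.

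\textbf{Lower bound, linear case.} Here I construct $u_\ve\in\cE$ with $J[u_\ve]\to M_{\rm e}g_{\rm max}$. Take $u_\ve=M_{\rm e}|B_\ve|^{-1}\chi_{B_\ve}$, where $B_\ve=B(x_\ve,r_\ve)\subset\Omega$ is a small ball near $y_0$. Write $d_\ve=\dist(x_\ve,\partial\Omega)=\ve^{a}$ and $r_\ve=\ve^{b}$ with $b>a$, so the ball sits inside $\Omega$ but shrinks faster than it approaches the boundary.

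Since $J[u_\ve]=M_{\rm e}\,\frac{1}{|B_\ve|}\int_{B_\ve}\rho_{u_\ve}\,dx$, it suffices to show $\rho_{u_\ve}\ge g_{\rm max}-o(1)$ on $B_\ve$. The competing effects are these:
\begin{enumerate}[label=(\roman*)]
\item the boundary data are close to $g_{\rm max}$ near $y_0$, by continuity of $g$;
\item the absorption $u_\ve\rho$ has total strength only $M_{\rm e}$, concentrated in a tiny ball.
\end{enumerate}

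\textbf{Barrier argument.} Compare $\rho_{u_\ve}$ from below, via the weak maximum principle for $-\Delta+\sigma+u_\ve$, with a subsolution of the form
\[
w=\rho_0-\text{(harmonic-type barrier near } y_0)-\text{(Newtonian correction)}.
\]
Here $\rho_0=\cT_g(0)$ solves $-\Delta\rho_0+\sigma\rho_0=0$ with $\rho_0=g$ on $\partial\Omega$, so $\rho_0\ge g_{\rm max}-\omega(\ve^{a})$ near $y_0$, where $\omega$ is the modulus of continuity of $g$. This uses a standard boundary barrier at $y_0$, available for $C^2$ domains via an exterior ball, together with continuity of $g$ at $y_0$.

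The Newtonian correction is where I expect the main obstacle. It handles the sink by exploiting the linear form $u_\ve\rho$. The deficit $\eta=\rho_0-\rho_{u_\ve}\ge0$ solves $-\Delta\eta+\sigma\eta\le u_\ve g_{\rm max}$ with $\eta=0$ on $\partial\Omega$. Therefore $\eta$ is bounded by $g_{\rm max}M_{\rm e}$ times the averaged Dirichlet Green's function $G_\Omega$. Near a boundary point at distance $d_\ve$, $G_\Omega(x,z)\lesssim\min\{|x-z|^{-1},\,d_\ve^2|x-z|^{-3}\}$. This bound alone does not give smallness on $B_\ve$ itself, because the self-interaction is of order $r_\ve^{-1}$, which is large.

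I therefore use the linear structure more carefully. Inside $B_\ve$ the equation reads $-\Delta\rho+(\sigma+\lambda_\ve)\rho=0$ with $\lambda_\ve=M_{\rm e}/|B_\ve|\sim r_\ve^{-3}$. Its solution is governed by the ratio of the ball's capacity to its mass: $\lambda_\ve r_\ve^2\sim r_\ve^{-1}\to\infty$. This suggests the naive ball does not work. The construction should instead make $u_\ve$ supported on a set that is thin in the normal direction but wide tangentially. Concretely, take a slab of thickness $\ve^{b}$ at depth $\ve^{a}$ and tangential radius $\ve^{c}$, with $c<a$, chosen so that $\lambda_\ve\cdot(\text{thickness})\cdot d_\ve\to0$. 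Then the flux from the nearby boundary, of order $g_{\rm max}/d_\ve$ over the tangential area, dominates the absorption.

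The one-dimensional computation, with a layer at depth $d$ of strength $\mu$ and $\rho\approx g_{\rm max}/(1+\mu d)$, confirms that the needed condition is $\mu d\to0$, where $\mu=M_{\rm e}/\text{area}\sim\ve^{-2c}$. Thus I need $a>2c$. With that choice the barrier gives $\rho_{u_\ve}\ge g_{\rm max}-o(1)$ on the support, and hence $J[u_\ve]\to M_{\rm e}g_{\rm max}$.

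\textbf{Nonlinear case.} I take the same $u_\ve$ and compare with the linear case. Since $f(s)\le s$, the nonlinear $\rho_{u_\ve}$ is at least the linear one, by the maximum-principle comparison as in the proof of Theorem~\ref{t:uv}. Because $f$ is increasing, $J_{\rm nonlin}[u_\ve]\ge M_{\rm e}\,\frac{1}{|{\rm supp}\,u_\ve|}\int f(\rho^{\rm lin}_{u_\ve})\,dx\to M_{\rm e}f(g_{\rm max})$.
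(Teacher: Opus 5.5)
Your upper bound and non-attainment argument are correct and match the paper's: $0\le\rho_u<g_{\rm max}$ together with strict monotonicity of $f$. Your reduction of the nonlinear case to the linear one via Theorem~\ref{t:uv} also matches the paper. The gap is in the lower bound, which is where all the work lies; the paper proves it separately as Theorem~\ref{t:uve}, using Lemma~\ref{l:construction}. The only estimate you actually set up is the Green-function bound on $\eta=\rho_0-\rho_{u_\ve}$, and you apply it only to the ball, where you correctly find it fails. For the slab you never redo it. Instead you substitute the one-dimensional heuristic $\rho\approx g_{\rm max}/(1+\mu d)$ and assert that ``the barrier gives'' $\rho_{u_\ve}\ge g_{\rm max}-o(1)$ on the support. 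No three-dimensional barrier, supersolution, or integral estimate for the slab is ever written down. So the key claim $\sup_{{\rm supp}\,u_\ve}(g_{\rm max}-\rho_{u_\ve})\to0$, on which $J[u_\ve]\to M_{\rm e}g_{\rm max}$ rests, is unproved.

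Your scaling is the right one. The paper's cap $S_\ve$ has tangential radius $\ve$ and normal extent $\ve^3$, so the ratio of normal extent to tangential area is $O(\ve)$. The gap can be closed in two ways.

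\textbf{With your own tool.} Take $x$ in a slab $S$ of thickness $t\le d$ at depth $d$ with tangential radius $R$. Use the boundary estimate $G_\Omega(x,z)\lesssim\min\{|x-z|^{-1},\,d(x)d(z)|x-z|^{-3}\}$; this is standard for $C^{1,1}$ domains but must be cited, and the Green function of $-\Delta+\sigma$ is dominated by that of $-\Delta$. It gives $\int_{S\cap B_d(x)}|x-z|^{-1}\,dz\lesssim td$ and $\int_{S\setminus B_d(x)}d^2|x-z|^{-3}\,dz\lesssim td$. Hence $\eta(x)\lesssim g_{\rm max}M_{\rm e}\,td/|S|\sim g_{\rm max}M_{\rm e}\,d/R^2\to0$, exactly when $a>2c$. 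Combined with continuity of $\rho_0$ at $y_0$, this finishes the proof. Note that this continuity is needed on the tangential scale $\ve^c$, not $\ve^a$ as you wrote.

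\textbf{The paper's route.} The paper avoids Green-function asymptotics altogether. It places the layer flush against $\partial\Omega$, so that $0<d(x)<\ell=\ve^3$, and writes $h-\rho_\ve=a_\ve+b_\ve$ with $h$ harmonic. It bounds $a_\ve\le\sigma g_{\rm max}\tau\le C\,d(x)$, where $-\Delta\tau=1$. It bounds $b_\ve\le g_{\rm max}(M_{\rm e}/|S_\ve|)T_\ve$, with $T_\ve\le W_\ell=\psi_\ell\circ d\le\ell^2$ coming from the explicit supersolution $-\Delta W_\ell\ge\chi_{\{d<\ell\}}$. This gives $b_\ve=O(\ell^2/|S_\ve|)=O(\ve)$.
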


\begin{proof}
Let $u \in \cE$ and let $\rho_u=\cT_g(u)$ be given as in 
Theorem~\ref{t:rhou}. We have $0 \le \rho_u < g_{\rm max}$ by Theorem~\ref{t:rhou}. 
Note that $f(s)\ge 0$ is an increasing function for $s \ge 0$. Thus, by
the definition of $\cE$, we obtain that 
\[
0 \le J[u] = \int_\Omega u f(\rho_u) \, dx \le \int_\Omega u f(g_{\rm max}) \, dx = M_{\rm e} f(g_{\rm max}).
\]
By Theorem~\ref{t:uve} below, 
there exist $u_k \in \cE$ such that $J[u_k] \to M_{\rm e} f(g_{\rm max})$ as 
$k \to \infty.$
Thus, $\sup_{v\in \cE} J[v] = M_{\rm e} f(g_{\rm max} ). $

Suppose there existed $u \in \cE$ such that $J[u]=\sup_{v\in \cE}J[v], $ which is 
$M_{\rm e} f(g_{\rm max}).$
Then, 
\begin{equation}
\label{MFJu}
\int_\Omega u [ f (g_{\rm max}) - f(\rho_u)] \, dx = M_{\rm e} f(g_{\rm max}) - J[u] = 0.
\end{equation}
Note that $u \ge 0$ as $u \in \cE$. By Theorem~\ref{t:rhou}, 
$0 \le \rho_u < g_{\rm max}$, hence $f(g_{\rm max}) > f(\rho_u)$, in $\Omega$.  
Therefore, \eqref{MFJu} implies $u = 0$ a.e.\ in $\Omega$. This contradicts the fact that
\[
\int_\Omega u \, dx =M_{\rm e}>0.
\]
Hence the supremum of $J$ over $\cE$ is not attained. 
\end{proof}

\begin{theorem}
    \label{t:uve}
There exists a family of concentrations $u_\ve\in \cE$ with $0 < \ve \le \ve_0 $ for 
some $\ve_0>0$ such that, with $\rho_\ve=\cT_g(u_\ve)$ as in 
Theorem~\ref{t:rhou}, 
\[
J[u_\ve]=\int_\Omega u_\ve f(\rho_\ve)\,dx \to M_{\rm e} f(g_{\rm max}) 
\qquad \mbox{as } \ve \to 0^+.
\]
\end{theorem}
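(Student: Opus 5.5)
We construct the family only for the linear case $f(s)=s$ and then pass to the nonlinear case by comparison. For the comparison, let $\rho^{\rm lin}_\ve$ and $\rho^{\rm nl}_\ve$ be the solutions of \eqref{BVP4rho} with $u=u_\ve$ for $f(s)=s$ and $f(s)=s/(s+1)$. The function $w=\rho^{\rm nl}_\ve-\rho^{\rm lin}_\ve\in H_0^1(\Omega)\cap H^2(\Omega)$ satisfies
\[
-\Delta w+\sigma w+u_\ve\,\frac{w}{1+\rho^{\rm nl}_\ve}=u_\ve\,\rho^{\rm lin}_\ve\Bigl(1-\frac{1}{1+\rho^{\rm nl}_\ve}\Bigr)\ge 0.
\]
After the truncation of Theorem~\ref{t:uv} if needed, the Weak Maximum Principle gives $\rho^{\rm nl}_\ve\ge\rho^{\rm lin}_\ve$. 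Hence the nonlinear $J$ is at least $\int u_\ve\, f(\rho^{\rm lin}_\ve)\,dx$, so it suffices to show that $\rho^{\rm lin}_\ve$ is close to $g_{\rm max}$ on most of the mass of $u_\ve$. The upper bound $M_{\rm e}f(g_{\rm max})$ is already in Theorem~\ref{t:supJ}.

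\textbf{Construction.} Fix $y_0\in\partial\Omega$ with $g(y_0)=g_{\rm max}$, and work in a boundary chart (flattened locally) with inward normal coordinate $s$. Take
\[
u_\ve=\frac{M_{\rm e}}{|A_\ve|}\chi_{A_\ve},
\]
where $A_\ve$ is a thin slab of tangential radius $\ve^a$ and depth $\ve^b$ along the boundary near $y_0$, with $b>2a$ so that the slab is pancake-shaped. Then $u_\ve\in\cE\cap L^\infty$, and $u_\ve\sim M_{\rm e}\ve^{-2a-b}$ on $A_\ve$. Since $g$ is continuous, $g\ge g_{\rm max}-\omega(\ve^a)$ on $\partial\Omega\cap B(y_0,2\ve^a)$.

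\textbf{Lower barrier.} In the linear case the equation $-\Delta\rho+(\sigma+u_\ve)\rho=0$ has a potential of size $\lambda:=M_{\rm e}\ve^{-2a-b}$ in the slab. A one-dimensional subsolution in the normal direction is
\[
\underline\rho(s)=(g_{\rm max}-\omega)\,\frac{\cosh(\sqrt{\lambda}\,(\ve^b-s))}{\cosh(\sqrt{\lambda}\,\ve^b)}.
\]
This profile stays close to $g_{\rm max}$ across the slab provided $\sqrt{\lambda}\,\ve^b\to0$, i.e. $b>2a+b/2\cdot$, which reads $b/2>a$ after simplification: $\lambda\ve^{2b}=M_{\rm e}\ve^{b-2a}\to0$, satisfied since $b>2a$. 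This is the one-dimensional intuition: the total absorbed mass per unit area, $\lambda\ve^b=M_{\rm e}\ve^{-2a}$, is large, but the slab is so thin that depletion across it costs only $\lambda\ve^{2b}$.

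\textbf{Main obstacle.} The genuinely 3D issue is that substrate is drained through the slab toward the interior, and tangential and curvature effects must be controlled. For this I would build a barrier of the form
\[
\underline\rho(x)=(g_{\rm max}-\omega)\,\bigl(1-C\lambda\ve^{b}\,s-K|x'|^2\ve^{-2a'}\ldots\bigr),
\]
linear in $s$ inside the slab and cut off by a harmonic-like decay outside, on a larger region of scale $\ve^{c}$ with $b<c<a$. Equivalently, I would compare with $g_{\rm max}-\omega-\theta(x)$, where $\theta$ solves $-\Delta\theta+\sigma\theta\ge u_\ve g_{\rm max}$, i.e. a Newtonian potential of $u_\ve$ with a Dirichlet correction. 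The boundary-layer estimate for the Dirichlet Green function, $G(x,y)\lesssim \dist(x,\partial\Omega)\,\dist(y,\partial\Omega)\,|x-y|^{-3}$, gives on $A_\ve$
\[
\theta\lesssim \lambda\int_{A_\ve}\frac{s\,s'}{|x-y|^3}\,dy\lesssim\lambda\,\ve^{2b}\log(\ve^{a-b}),
\]
which tends to $0$ when $b>2a$. I expect this Green function estimate to be the hardest step. I would prove it directly with a barrier, namely a half-space Green function with reflection plus a $C^2$ boundary correction, choosing the exponents (e.g. $a=1$, $b=3$) to absorb the logarithm.

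\textbf{Conclusion.} With $\rho^{\rm lin}_\ve\ge g_{\rm max}-\omega(\ve^a)-o(1)$ on $A_\ve$, we get $J[u_\ve]\ge M_{\rm e}\bigl(g_{\rm max}-o(1)\bigr)$, and by the comparison above the same holds in the nonlinear case with $f(g_{\rm max}-o(1))$. Together with the upper bound from Theorem~\ref{t:supJ}, this gives $J[u_\ve]\to M_{\rm e}f(g_{\rm max})$.
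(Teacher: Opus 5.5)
Your overall plan matches the paper's: a boundary cap at $y_0$ of tangential radius $\ve^a$ and depth $\ve^b$ with $b>2a$ (the paper takes exactly $a=1$, $b=3$), the linear case first, and the nonlinear case via $\rho^{\rm nl}_\ve\ge\rho^{\rm lin}_\ve$, which you derive correctly. The gap is the step you flag as the main obstacle, which is not actually carried out.

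First, the comparison function $g_{\rm max}-\omega-\theta$ with $\theta\in H_0^1(\Omega)$ is not a subsolution. On $\partial\Omega$ it equals $g_{\rm max}-\omega$, which exceeds $g$ wherever $g<g_{\rm max}-\omega$ (e.g.\ where $g=0$). In the interior, if $-\Delta\theta+\sigma\theta=g_{\rm max}u_\ve$, then
\[
-\Delta\underline\rho+(\sigma+u_\ve)\underline\rho=\sigma(g_{\rm max}-\omega)-u_\ve(\omega+\theta),
\]
which is positive off $A_\ve$. The constant must be replaced by something that carries the boundary data and the $\sigma$-decay. For example, take the solution $h_\sigma\in H^2(\Omega)\hookrightarrow C(\overline{\Omega})$ of $-\Delta h_\sigma+\sigma h_\sigma=0$ with $h_\sigma=g$ on $\partial\Omega$. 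Then $\underline\rho=h_\sigma-\theta$ satisfies $-\Delta\underline\rho+(\sigma+u_\ve)\underline\rho=u_\ve(h_\sigma-g_{\rm max}-\theta)\le 0$ and $\underline\rho=g$ on $\partial\Omega$. Moreover $h_\sigma\to g_{\rm max}$ uniformly on $A_\ve$ by continuity at $y_0$. The paper does the same thing with the harmonic extension $h$ plus a term $a_\ve\le\sigma g_{\rm max}\tau=O(d(x))$. Your earlier linear-in-$s$ barrier is not usable either: it is left with ``$\ldots$'' and requires $b<c<a$, which is empty when $b>2a$.

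Second, you defer the bound on $\theta$ to a boundary Green-function estimate that you plan to prove from scratch, and as written it fails. The integral $\int_{A_\ve}ss'|x-y|^{-3}\,dy$ diverges at $y=x$. One needs $G(x,y)\lesssim\min\{|x-y|^{-1},\,d(x)d(y)|x-y|^{-3}\}$ (the known estimate for $C^{1,1}$ domains), which gives $\int_{A_\ve}G(x,y)\,dy\lesssim\ve^{2b}$ with no logarithm.

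The idea you are missing is that none of this is needed. By comparison, $\theta\le g_{\rm max}(M_{\rm e}/|A_\ve|)\,T$, where $T\in H_0^1(\Omega)$ solves $-\Delta T=\chi_{A_\ve}$. Since $A_\ve\subset\{d<\ell\}$ with $\ell=\ve^b$, $T$ is dominated by the explicit supersolution $W_\ell=\psi_\ell\circ d$, where $\psi_\ell(s)=2\ell s-s^2$ for $s\le\ell$ and $\psi_\ell(s)=\ell^2$ for $s\ge\ell$. Indeed, $-\Delta W_\ell=2-2(\ell-d)\Delta d\ge 1$ on $\{d<\ell\}$ once $\ell\le 1/(2\sup|\Delta d|)$, and $\psi_\ell'(\ell)=0$, so there is no singular part across $\{d=\ell\}$. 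Hence $\theta\le g_{\rm max}(M_{\rm e}/|A_\ve|)\,\ve^{2b}\lesssim\ve^{b-2a}\to 0$ uniformly in $\Omega$.

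So your one-dimensional normal-direction intuition is already a global barrier once you apply it to the whole boundary layer $\{d<\ve^b\}$ rather than to the slab alone. There is no tangential draining to control, and curvature enters only through $|\Delta d|\le K$. This is exactly Lemma~\ref{l:construction} combined with Step~4 of the paper's proof.
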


Before proving this theorem, we first consider a one-dimensional analogue and carry out explicit calculations to construct a maximizing sequence. 
Let $L > 0$ and define 
\[
\cE_1 = \left\{ u \in L^2(0, L): u \ge 0 \mbox{ a.e.\ in }\, (0, L)
\mbox{ and } \int_0^L u(x) \, dx = M_{\rm e} \right\}. 
\]

\begin{proposition}
    \label{p:1D}
    Let $0 \le\rho_L < \rho_0$. For $0 < \ve < L$, let $u_\ve \in \cE_1$ be given by 
    $u_\ve (x)= M_{\rm e}/\ve$ if $0 \le x \le \ve$ and $u_\ve(x) = 0$ if $\ve < x \le L.$
\begin{compactenum}
    \item[\rm (1)]
    Let $\rho_\ve  = \rho_{u_\ve} \in H^1(0,L) $ be the  solution to the boundary-value problem
\begin{equation}
\label{BVP4rhove}
- \rho_\ve'' + u_\ve \rho_\ve=0  \quad \mbox{in } (0, L)
\qquad \mbox{and} \qquad \rho_\ve(0) = \rho_0, \quad  \rho_\ve (L) = \rho_L.
\end{equation}
    We  have 
    \[
    \lim_{\ve \to 0+} \int_0^L u_\ve \rho_\ve  \, dx = M_{\rm e} \rho_0. 
    \]
    \item[\rm (2)]
    Let $\hat{\rho}_\ve 
    \in H^2(0, L)$ be the unique solution
to the boundary-value problem
\begin{equation}
\label{hatrhove}
    - \hat{\rho}_\ve'' + \sigma \hat{\rho}_\ve + u_\ve f(\hat{\rho}_\ve) = 0
    \quad \mbox{in } (0, L) \qquad \mbox{and} \qquad \hat{\rho}_\ve (0) = \rho_0, \quad
    \hat{\rho}_\ve (L) = \rho_L. 
\end{equation}
    We  have 
    \[
    \lim_{\ve \to 0+} \int_0^L u_\ve f(\hat{\rho}_\ve ) \, dx = M_{\rm e} f(\rho_0). 
    \]
\end{compactenum}
\end{proposition}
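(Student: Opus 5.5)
For part (1) I would solve \eqref{BVP4rhove} explicitly. Set $k = \sqrt{M_{\rm e}/\ve}$. On $(0,\ve)$ the equation is $\rho'' = k^2 \rho$, so $\rho_\ve = A\cosh(kx) + B\sinh(kx)$ with $A = \rho_0$. On $(\ve,L)$ the equation is $\rho''=0$, so $\rho_\ve$ is the affine function joining $\rho_\ve(\ve)$ to $\rho_L$. Matching value and derivative at $x=\ve$ (the solution is $C^1$ since $u_\ve\in L^\infty$) gives one linear equation for $B$. Writing $\theta = k\ve = \sqrt{M_{\rm e}\ve}\to 0$, this equation reads
\[
k\bigl(\rho_0 \sinh\theta + B\cosh\theta\bigr) = \frac{\rho_L - \rho_0\cosh\theta - B\sinh\theta}{L-\ve}.
\]
From it, $B = O(\theta)$ as $\ve\to0$. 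The reaction integral is then computed exactly from the equation:
\[
\int_0^L u_\ve\rho_\ve\,dx = \int_0^\ve \rho_\ve''\,dx = \rho_\ve'(\ve) - \rho_\ve'(0).
\]
Here $\rho_\ve'(0) = kB$, and $\rho_\ve'(\ve)$ equals the slope $(\rho_L-\rho_\ve(\ve))/(L-\ve)$. Both are $O(1)$, and more precisely
\[
\rho_\ve'(\ve)-\rho_\ve'(0) = k\rho_0\sinh\theta + kB(\cosh\theta - 1) = M_{\rm e}\rho_0 + o(1).
\]
This follows from $k\sinh\theta \approx k\theta = M_{\rm e}$, $\cosh\theta-1=O(\theta^2)$ and $kB = O(1)$.

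A softer alternative avoids explicit formulas. By the 1D maximum principle $0\le\rho_\ve\le\rho_0$. On $(0,\ve)$ we have $\rho_\ve'' = u_\ve\rho_\ve \ge 0$, so $\rho_\ve$ is convex there, and $|\rho_\ve'|$ is bounded on $(0,\ve)$ uniformly in $\ve$. Indeed $\rho_\ve'$ varies on $(0,\ve)$ by at most $\int u_\ve\rho_\ve\le M_{\rm e}\rho_0$, and it equals the bounded slope at $\ve$. Hence $\rho_\ve\ge \rho_0 - C\ve$ on $(0,\ve)$, and
\[
\int u_\ve\rho_\ve \ge M_{\rm e}(\rho_0-C\ve),
\]
while the upper bound $\le M_{\rm e}\rho_0$ is immediate. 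I would present this version since it transfers directly to part (2).

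For part (2), existence, uniqueness and the bound $0\le\hat\rho_\ve\le\rho_0$ follow exactly as in Theorem~\ref{t:rhou}, with the 1D maximum principle. The upper bound $\int u_\ve f(\hat\rho_\ve)\le M_{\rm e}f(\rho_0)$ is immediate because $f$ is increasing. For the lower bound I would show $\hat\rho_\ve\ge\rho_0 - C\ve$ on $[0,\ve]$ via a uniform derivative bound.

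\textbf{Step (a): bound the total variation of $\hat\rho_\ve'$ on $[0,L]$.} We have $|\hat\rho_\ve''| \le \sigma\rho_0 + u_\ve f(\rho_0)$, whose integral over $(0,L)$ is at most $\sigma\rho_0L + M_{\rm e}f(\rho_0)$, a constant $C_1$ independent of $\ve$.

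\textbf{Step (b): find a point where $\hat\rho_\ve'$ is bounded.} By the mean value theorem on $[0,L]$ there is a point where
\[
\hat\rho_\ve' = \frac{\rho_L-\rho_0}{L},
\]
which is bounded. Combining with step (a), $\|\hat\rho_\ve'\|_{L^\infty}\le C$ uniformly in $\ve$.

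\textbf{Step (c): conclude.} For $x\in[0,\ve]$ this gives $\hat\rho_\ve(x)\ge\rho_0-C\ve$. Then, using the monotonicity of $f$ and its Lipschitz bound on $[0,\rho_0]$,
\[
\int_0^L u_\ve f(\hat\rho_\ve)\,dx \ge M_{\rm e}f(\rho_0-C\ve)\to M_{\rm e}f(\rho_0).
\]
The same three steps prove part (1) with $\sigma=0$ and $f(s)=s$.

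The only delicate point is the uniform Lipschitz bound. The concern is that $u_\ve$ blows up like $1/\ve$, so a naive $L^\infty$ bound on $\hat\rho_\ve''$ fails. The remedy is to use an $L^1$ bound on $\hat\rho_\ve''$ instead, which holds because of the mass constraint $\int u_\ve = M_{\rm e}$ and the a priori bound $0\le\hat\rho_\ve\le\rho_0$.
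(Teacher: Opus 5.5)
Your proof is correct, but it takes a different and more economical route than the paper, mainly in part (2).

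\textbf{Part (1).} The paper argues only by explicit solution. It writes $\rho_\ve = A_\ve e^{k_\ve x} + (\rho_0 - A_\ve)e^{-k_\ve x}$ on $[0,\ve]$ and an affine function on $[\ve,L]$, shows $A_\ve \to \rho_0/2$, and integrates the exponentials directly. Your first version is the same computation in a $\cosh/\sinh$ basis, evaluated more efficiently as $\rho_\ve'(\ve)-\rho_\ve'(0)$.

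\textbf{Part (2).} The paper proceeds in three steps:
\begin{itemize}
\item It solves $-\hat\rho_\ve''+\sigma\hat\rho_\ve=0$ explicitly on $(\ve,L)$ with $\sinh(\sqrt\sigma\,\cdot)$, giving $|\hat\rho_\ve'|\le 2\sqrt\sigma\rho_0\coth(L\sqrt\sigma/2)$ there for $\ve<L/2$.
\item It integrates the equation over $(0,\ve)$ to bound the jump $|\hat\rho_\ve'(\ve)-\hat\rho_\ve'(0)|\le \frac12\sigma\rho_0L+M_{\rm e}f(\rho_0)$.
\item It uses convexity ($\hat\rho_\ve''\ge 0$) to sandwich $\hat\rho_\ve'$ on $[0,\ve]$ between its endpoint values.
\end{itemize}
You replace the explicit outer solution by the mean value theorem on $[0,L]$. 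You replace the convexity sandwich by the global bound $\|\hat\rho_\ve''\|_{L^1(0,L)}\le\sigma\rho_0L+M_{\rm e}f(\rho_0)$. Together these give a uniform Lipschitz bound on all of $[0,L]$ at once.

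\textbf{Comparison.} Your route needs no formulas and no restriction $\ve<L/2$, and one argument covers both parts. It uses only $0\le\hat\rho_\ve\le\rho_0$, monotonicity of $f$ with $f(0)=0$, and $\int u_\ve\,dx=M_{\rm e}$. So it applies verbatim to any nonnegative $u_\ve$ of mass $M_{\rm e}$ supported in $[0,\ve]$, and to other monotone nonlinearities. The paper's route gives explicit insight into the boundary layer: $A_\ve\to\rho_0/2$, the sign of $C_\ve$, and positivity of $\rho_\ve$ read off directly rather than from a maximum principle.

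The mechanism you exploit, that an $L^1$ bound on the second derivative controls the first derivative pointwise, is special to one dimension. In three dimensions a Laplacian bounded only in $L^1$ gives no pointwise control of $\nabla\rho_\ve$. That is why the paper turns to barrier functions and the thin-layer geometry of $S_\ve$ in the proof of Theorem~\ref{t:uve}.
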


\begin{proof}
(1) 
Since $u_{\ve}\rho_{\ve}\in L^2(0,L)$, we have $\rho_{\ve}\in H^2(0,L) \hookrightarrow C^1([0,L])$.
Solving \eqref{BVP4rhove}, we obtain 
\begin{equation}\label{rhove}
\rho_\ve(x) =
\begin{cases}
A_{\ve} \exp (k_{\ve} x) + (\rho_0 - A_\ve) 
\exp(-k_{\ve}x) \quad & \mbox{if } \, 0 \le x \le \ve, \\
C_{\ve} x +\rho_L-C_{\ve}L \quad & \mbox{if } \, \ve < x \le L, 
\end{cases}
\end{equation}
where 
\begin{align*}
k_{\ve} = \sqrt{\frac{M_{\rm e}}{\ve}},
\quad A_{\ve} = 
\frac{\rho_0 e^{-k_{\ve}\ve}\bigl(1-k_{\ve}(L-\ve)\bigr)-\rho_L}
{2\left[\cosh(k_{\ve}\ve)\bigl(1-k_{\ve}(L-\ve)\bigr)-e^{k_{\ve}\ve}\right]},
\quad
C_{\ve} =
\frac{2k_{\ve}A_{\ve}e^{k_{\ve}\ve}-k_{\ve}\rho_L}
{1-k_{\ve}(L-\ve)}.
\end{align*}
Routine calculations show that  
$A_{\ve} \to  {\rho_0}/{2}$ as $\ve \to 0^+.$ Thus, if 
$\ve > 0$ is small enough, $C_\ve < 0$ and the solution $\rho_\ve > 0$ on $(0, L).$
Moreover, 
\begin{align*}
\int_0^{L} u_{\ve}\rho_{\ve} \, dx 
&=\frac{M_{\rm e}}{ \ve} \int_0^{\ve} \left[ A_\ve  e^{k_{\ve} x} + (\rho_0-A_\ve) e^{-k_{\ve}x}\right] dx \\
&= M_{\rm e}\left[A_\ve \left(\frac{e^{k_{\ve}\ve}-1}{k_{\ve}\ve}\right) + (\rho_0-A_\ve) \left(\frac{e^{-k_{\ve}\ve}-1}{-k_{\ve}\ve}\right)\right] 
\\
&
\to 
M_{\rm e} \rho_0
\qquad \mbox{as } \ve \to 0^+. 
\end{align*}

(2) 
On $(\ve, L)$, $\hat{\rho}_\ve$ satisfies
$
-\hat{\rho}_\ve''+\sigma\hat{\rho}_\ve=0.
$
Solving this equation with the boundary conditions $\hat{\rho}_\ve(\ve)=:a_\ve$ and $\hat{\rho}_\ve(L)=\rho_L$, 
we get 
\[
\hat{\rho}_\ve(x)= \frac{\rho_L\sinh\left(\sqrt\sigma(x-\ve)\right)
+a_\ve\sinh\left(\sqrt\sigma(L-x)\right) 
}
{\sinh\left(\sqrt\sigma(L-\ve)\right)} \quad \forall x \in (\ve,L). 
\]
Differentiating, we get
\begin{align*}
|\hat{\rho}_\ve'(x)| & \le \sqrt{\sigma} \left[ \frac{\rho_L \cosh \left(\sqrt{\sigma}(x-\ve)\right)+a_\ve \cosh \left(\sqrt{\sigma}(L-x)\right)}{\sinh\left(\sqrt{\sigma}(L-\ve)\right)} \right]\\
& \le \sqrt{\sigma} \left[ \frac{\rho_L \cosh\left( \sqrt{\sigma}(L-\ve)\right)+a_\ve \cosh \left( \sqrt{\sigma}(L-\ve)\right)}{\sinh\left(\sqrt{\sigma}(L-\ve)\right)} \right]\\
&\le 2\sqrt{\sigma}\rho_0 \coth \left(\sqrt{\sigma}(L-\ve)\right)\qquad \forall x \in (\ve, L), 
\end{align*}
since $0\le \hat{\rho}_\ve \le \rho_0$ in $(0,L)$ by 
the one-dimensional analogue of Theorem~\ref{t:rhou}. Thus, for $0<\ve < L/2$, we have 
\begin{equation}
\label{hatrhovep}
|\hat{\rho}_\ve'(x)|\le 2\sqrt{\sigma}\rho_0 \coth \left(  \frac{L\sqrt{\sigma}}{2} \right) \qquad \forall x \in [\ve,L].
\end{equation}

By the equation for $\hat{\rho}_\ve$ in \eqref{hatrhove}, we get
\[
\hat{\rho}_\ve'(\ve)-\hat{\rho}_\ve'(0)
=\int_0^\ve \hat{\rho}_\ve''(s)\,ds= \sigma\int_0^\ve \hat{\rho}_\ve(s)\,ds
  + \frac{M_{\rm e}}{\ve}\int_0^\ve f(\hat{\rho}_\ve(s))\,ds.
\]
Consequently, since $0 \le \hat{\rho}_\ve \le \rho_0$, we have for $0 < \ve < L/2$ that 
\begin{align*}
|\hat{\rho}_\ve'(\ve)-\hat{\rho}_\ve'(0)|
& \le \sigma\int_0^\ve \rho_0\,ds
  + \frac{M_{\rm e}}{\ve}\int_0^\ve f(\rho_0)\,ds 
\le \frac12 \sigma \rho_0 L + M_{\rm e} f(\rho_0). 
\end{align*}
This and \eqref{hatrhovep} imply that 
$| \hat{\rho}'_\ve (0) | \le C $ and 
$ | \hat{\rho}'_\ve(\ve) | \le C$ if  $ 0 < \ve < L/2,$
where $C > 0$ is a constant independent of $\ve \in (0, L/2).$
By \eqref{hatrhove}, $\hat{\rho}_\ve''\ge 0$ a.e. on $(0,L)$. Thus, 
$\hat{\rho}_\ve'(0)\le \hat{\rho}_\ve'(x)\le \hat{\rho}_\ve'(\ve) $ for all 
$x \in (0, \ve)$ and all $\ve \in (0, L/2).$ Hence, 
$\sup_{0\le x \le \ve} | \hat{\rho}'_\ve (x)| \le C $
for all $ \ve \in (0, L/2).$
Consequently, 
\[
\sup_{0 \le x \le \ve} \left| \hat{\rho}_\ve(x)-\rho_0\right|
  = \sup_{0 \le x \le \ve} \left|\int_0^x \hat{\rho}_\ve'(s)\,ds\right|  
  \le C \ve
  \qquad \forall \ve \in (0, L/2). 
\]

Finally, by the definition of $u_\ve $ and the fact that 
$|f'(s) | \le 1$ for all $s \ge 0$, we have 
\begin{align*}
    \left| \int_0^L u_\ve f (\hat{\rho}_\ve) \, dx - M_{\rm e} f (\rho_0) \right| 
    &= \left| \int_0^\ve \frac{M_{\rm e}}{\ve} \left[ f(\hat{\rho}_\ve) - f(\rho_0) \right] \, dx \right|\\
    & \le \frac{M_{\rm e}}{\ve} \int_0^\ve | \hat{\rho}_\ve(x) - \rho_0 | \, dx 
     \\
     &\le M_{\rm e} C \ve \to 0 \qquad \mbox{as } \ve \to 0. 
\end{align*}
This completes the proof. 
\end{proof}

We now prove a localization result that will be used in our proof of Theorem~\ref{t:uve}. 

\begin{lemma}\label{l:construction}
For any $\ell > 0$, we define $\psi_\ell:[0, \infty) \to \R $ by
  \[
    \psi_\ell(s):=
    \begin{cases}
    2\ell s-s^2 & \quad \mbox{if } \, 0\le s\le \ell,\\[1mm]
    \ell^2 & \quad \mbox{if }\, s\ge \ell,
    \end{cases}
    \]
    and $W_\ell = \psi_\ell \circ d: \Omega \to \R$, where 
 $d(x):=\dist(x,\partial\Omega)$ $(x \in \overline{\Omega})$. 
There exist $\delta>0$ and $K>0$ such that, 
    for $0 < \ell \le \min\{\delta/2,1/(2K)\}$, 
    the following hold true:
\begin{compactenum}
    \item[{\rm (1)}] 
$  W_\ell\in H_0^1(\Omega)\cap W^{1,\infty}(\Omega)$, $ 0\le W_\ell\le \ell^2$ 
in $\Omega$, and $-\Delta W_\ell\ge \chi_{\{d<\ell\}}$ in $\Omega$ in the weak sense; 
    
    \item[{\rm (2)}] 
    If $E\subset \{x\in\Omega:d(x)<\ell\}$
    is measurable and $T_E\in H_0^1(\Omega)$ is the unique weak solution of $-\Delta T_E=\chi_E$ in $\Omega$,
    then $ 0\le T_E\le W_\ell\le \ell^2$ in $\Omega.$
\end{compactenum}
\end{lemma}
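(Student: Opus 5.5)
The approach is to exploit the regularity of the distance function near a $C^2$ boundary and then compare $T_E$ with $W_\ell$ via the weak maximum principle. Since $\partial\Omega$ is $C^2$ and compact, there is $\delta>0$ such that $d\in C^2(\overline{\Omega_\delta})$ on the tubular neighborhood $\Omega_\delta=\{x\in\Omega: d(x)<\delta\}$. There $|\nabla d|=1$ and $|\Delta d|\le K$ for some $K>0$, with $K$ controlled by the principal curvatures of $\partial\Omega$ (cf.\ Lemma~14.16 of \cite{GilbargTrudinger98}). On all of $\overline\Omega$, $d$ is Lipschitz with constant $1$. These two constants $\delta$ and $K$ are the ones in the statement.

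For part (1), $\psi_\ell$ is $C^1$ on $[0,\infty)$, since $\psi_\ell'(\ell^-)=0=\psi_\ell'(\ell^+)$. It is piecewise $C^2$ with $\psi_\ell'=2(\ell-s)\ge0$ and $\psi_\ell''=-2$ on $(0,\ell)$, and $\psi_\ell(0)=0$. Hence $W_\ell\in W^{1,\infty}(\Omega)$, $W_\ell$ vanishes on $\partial\Omega$ (so $W_\ell\in H_0^1$), and $0\le W_\ell\le\ell^2$. In $\{d<\ell\}\subset\Omega_\delta$ we compute classically
\[
-\Delta W_\ell=-\psi_\ell''(d)|\nabla d|^2-\psi_\ell'(d)\Delta d \ge 2-2\ell K\ge 1,
\]
using $\psi_\ell'(d)\le 2\ell$ and $\ell\le 1/(2K)$. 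In $\{d>\ell\}$, $W_\ell$ is constant, so $-\Delta W_\ell=0$ there.

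The main step is verifying the weak inequality across the interface $\{d=\ell\}$, which is a $C^1$ level surface lying inside $\Omega_\delta$ because $\ell\le\delta/2$. Take $\phi\in C_c^\infty(\Omega)$ with $\phi\ge0$. Split $\int_\Omega\nabla W_\ell\cdot\nabla\phi\,dx$ into the integrals over $\{d<\ell\}$ and $\{d>\ell\}$; the second vanishes. On $\{d<\ell\}$, integrate by parts. The boundary term on $\{d=\ell\}$ contains the factor $\psi_\ell'(\ell)=0$, and the term on $\partial\Omega$ vanishes by compact support. This leaves $\int_{\{d<\ell\}}(-\Delta W_\ell)\phi\,dx\ge\int\chi_{\{d<\ell\}}\phi\,dx$. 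To avoid fuss about the regularity of $\{d=\ell\}$, one can instead approximate $\psi_\ell$ by concave smoothings $\psi^{(n)}$ with $\psi^{(n)\prime}\ge0$ supported in $[0,\ell]$ and pass to the limit; the $C^1$ matching at $s=\ell$ is exactly what prevents a singular negative contribution.

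For part (2), existence and uniqueness of $T_E\in H_0^1(\Omega)$ follow from Lax--Milgram. Since $\chi_E\ge0$, the weak maximum principle gives $T_E\ge0$. Set $w=W_\ell-T_E\in H_0^1(\Omega)$. Because $E\subset\{d<\ell\}$, we have weakly $-\Delta w\ge\chi_{\{d<\ell\}}-\chi_E\ge0$, and $w=0$ on $\partial\Omega$. By density, the weak inequality from part (1) extends to nonnegative test functions in $H_0^1$. The weak maximum principle (Theorem~8.1 of \cite{GilbargTrudinger98}), or directly testing with $w^-$, then gives $w\ge0$. Combined with part (1), this yields $0\le T_E\le W_\ell\le\ell^2$.
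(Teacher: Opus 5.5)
Your proposal is correct and takes the same route as the paper. Both use $|\nabla d|=1$ and $|\Delta d|\le K$ near $\partial\Omega$ to get $-\Delta W_\ell = 2-(2\ell-2d)\Delta d\ge 2-2\ell K\ge 1$ on $\{d<\ell\}$, kill the flux across $\{d=\ell\}$ through $\psi_\ell'(\ell)=0$, and compare $T_E$ with $W_\ell$ by the weak maximum principle. The only cosmetic difference is that the paper applies the divergence theorem directly with test functions $\varphi\in H_0^1(\Omega)$, where the $\partial\Omega$ term vanishes by the zero trace, instead of using $C_c^\infty$ test functions followed by a density argument.
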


\begin{proof} 
We have $d\in C^2\bigl(\{x\in\overline{\Omega}:d(x)<\delta\}\bigr)$ for some $\delta > 0$
small enough; cf.\  Lemma $14.16$ in \cite{GilbargTrudinger98}. Shrinking $\delta$ if necessary, 
we have 
\begin{equation} 
\label{ddK}
|\nabla d|=1 \qquad \mbox{and} \qquad |\Delta d|\le K \qquad \mbox{on } D_\delta: = \{ x \in \Omega: d(x) < \delta\},
\end{equation}
for some constant $K > 0.$

(1) Fix $0 < \ell \le \min\{\delta/2,1/(2K)\}.$ Since $d$ is Lipschitz on $\overline{\Omega}$ 
and $\psi_\ell$ is Lipschitz on $[0,\infty)$, the composition $W_\ell=\psi_\ell\circ d$
belongs to $W^{1,\infty}(\Omega)\subset H^1(\Omega)$. Moreover, since $d=0$ on $\partial\Omega$ and $\psi_\ell(0)=0$, we have $W_\ell\in H_0^1(\Omega)\cap W^{1,\infty}(\Omega).$
Also, by the definition of $\psi_\ell$, $0\le W_\ell(x)\le \ell^2$ for all $x \in \Omega.$ 

We now prove the weak inequality $-\Delta W_{\ell} \ge \chi_{\{d<\ell\}}$ in $\Omega$. Note that 
$\psi_\ell \in C^1([0, \infty))$ and
\[
\psi_\ell'(s)= 
\begin{cases}
2\ell-2s & \quad \mbox{if }\, 0\le s<\ell,\\
0 & \quad \mbox{if } \, s\ge \ell.
\end{cases}
\]
Hence, $\psi'_\ell$ is Lipschitz on $[0,\infty)$. 
The chain rule for Lipschitz compositions yields $\nabla W_\ell=\psi_\ell' (d)\nabla d$
a.e.\ in $\Omega$. 
Set $D_\ell=\{x\in\Omega:d(x)<\ell\}$. 
Let $\varphi\in H_0^1(\Omega)$. 
Since $\nabla W_\ell=0$ a.e. on $\Omega\setminus D_\ell$, we have
\begin{equation}
\label{Well}
\int_\Omega \nabla W_\ell\cdot \nabla\varphi\,dx
=
\int_{D_\ell}\psi'_\ell(d)\nabla d\cdot \nabla\varphi\,dx.
\end{equation}

Define $V_\ell(x)=\psi'_\ell(d(x))\nabla d(x)$ $(x\in \overline{D_\ell}).$ 
Since $d\in C^2(\overline{D_\ell})$, we have $V_\ell\in C^1(\overline{D_\ell};\R^3).$ Also, 
$D_\ell$ is a $C^2$ domain, since  $\partial\Omega$ is $C^2$ 
and $\{ x \in \Omega: d=\ell\}$ is a $C^2$ 
level surface because $|\nabla d|=1\neq 0$ there. Therefore, the divergence theorem gives
\begin{equation} 
\label{Dell} 
\int_{D_\ell}V_\ell\cdot \nabla\varphi\,dx
=
-\int_{D_\ell} (\operatorname{div}V_\ell)\,\varphi\,dx
+
\int_{\partial D_\ell}(V_\ell\cdot \nu_{D_\ell})\,\varphi\,dS.
\end{equation}
If $x \in \partial D_\ell$, then either $d(x) = \ell$ and $\psi'_\ell(d(x)) = 0$, or $x\in \partial \Omega$ where $\varphi=0$. Thus, the boundary integral vanishes. 
If  $x\in D_\ell$, then $d(x) \le \ell$ and $|\nabla d(x)|=1$. Hence
\begin{align*}
\operatorname{div} V_\ell (x) &= \operatorname{div}\!\bigl(\psi_\ell'(d(x))\nabla d(x)\bigr)
\\
&= \psi_\ell''(d(x))|\nabla d(x)|^2+\psi'_\ell(d (x))\Delta d(x)
\\
&=
-2+(2\ell-2d(x))\Delta d(x) \qquad \mbox{if }\, x \in D_\ell.
\end{align*}
Define \[
f_\ell(x):=
\begin{cases}
2-(2\ell-2d(x))\Delta d(x), & \mbox{if } x\in D_\ell,\\[1mm]
0, & \mbox{if } x\in \Omega\setminus D_\ell,
\end{cases}
\]
Then, $f_\ell\in L^\infty(\Omega)$, and by \eqref{Well} and
\eqref{Dell}, 
\begin{equation} 
\label{fell}
\int_\Omega \nabla W_\ell\cdot \nabla\varphi\,dx
=
\int_\Omega f_\ell\,\varphi\,dx
\qquad
\forall \varphi\in H_0^1(\Omega).
\end{equation}

We next estimate $f_\ell$. On $D_\ell$, we have $0\le d<\ell\le \delta$, and $|\Delta d|\le K$ by \eqref{ddK}. So, 
\[
f_\ell
=
2-(2\ell-2d)\Delta d
\ge
2-(2\ell-2d)|\Delta d|
\ge
2-2\ell K
\ge 1 \qquad \mbox{on }\, D_\ell. 
\]
Therefore $f_\ell\ge \chi_{D_\ell}=\chi_{\{d<\ell\}}$ a.e.\ in $\Omega.$
Hence, for every nonnegative $\varphi\in H_0^1(\Omega)$, by \eqref{fell} we have that 
\[
\int_\Omega \nabla W_\ell\cdot \nabla\varphi\,dx
=
\int_\Omega f_\ell\,\varphi\,dx
\ge
\int_\Omega \chi_{\{d<\ell\}}\,\varphi\,dx, 
\]
as desired. 

(2) 
By the standard elliptic theory, we have $T_E\ge 0$
a.e.\ in $\Omega.$ Let $E \subset \{ x \in \Omega: d(x) < \ell\}$ with
$0 < \ell \le \min\{\delta/2,1/(2K)\}. $
Since $-\Delta W_\ell \ge \chi_{\{d<\ell\}}\ge \chi_E= -\Delta T_E$ in the weak sense, 
standard elliptic theory gives $T_E\le W_\ell $
a.e.\ in $\Omega.$ Combining these inequalities with 
$0\le W_\ell\le \ell^2$ yields $0\le T_E\le W_\ell\le \ell^2$ 
a.e.\ in $\Omega.$
\end{proof}
We are now ready to prove the main result. 

\begin{proof}[Proof of Theorem~\ref{t:uve}]
We construct a family of subsets $S_\ve\subset \overline{\Omega}$ for all $\ve > 0$
that are sufficiently small and define the related functions $u_\ve \in \cE$ by
\begin{equation}
    \label{uve} u_\ve = \frac{M_{\rm e}}{|S_\ve|} \chi_{S_\ve} \qquad \mbox{in } \Omega. 
\end{equation}
The set $S_\ve$ is in an $O(\ve)$-neighborhood of $y_0 \in \partial \Omega$ at which the boundary
value $g$ achieves its maximum value $g_{\rm max} > 0$. 
Let $\rho_\ve = \cT_g(u_\ve)$ with the linear case
(i.e., $f(s) = s$ for any $s$); cf.\ Theorem~\ref{t:rhou}. We shall prove that
\begin{equation}
    \label{rhove2g}
    \lim_{\ve \to 0^+} \sup_{x \in S_\ve } |\rho_\ve (x) - g_{\rm max}| = 0. 
\end{equation}

Suppose this is correct. Then, for the linear case, we have 
\begin{align*}
\left| J [u_\ve] - M_{\rm e} f(g_{\rm max}) \right| &= 
\left| \int_\Omega u_\ve \rho_\ve \,dx - M_{\rm e} g_{\rm max}\right|
\\
& = \frac{M_{\rm e}}{|S_\ve|} \left| \int_{S_\ve} (\rho_\ve - g_{\rm max} )\, dx \right|
\\
&\le M_{\rm e} \sup_{x \in S_\ve} |\rho_\ve (x) - g_{\rm max} | \\
&\to 0 
\qquad \mbox{as } \ve \to 0^+. 
\end{align*}
Hence, $J[u_\ve]\to M_{\rm e} f(g_{\rm max})$ as $\ve \to 0^+.$
Consider the nonlinear case: $f(s) = s/(s+1)$ for $s > -1.$ 
Let $\hat{\rho}_\ve = \cT_g(u_\ve) $ (cf.\ Theorem~\ref{t:rhou}). 
Since $u_\ve f(\hat{\rho}_\ve) = v_\ve \hat{\rho}_\ve$ with 
$ v_\ve = u_\ve / (\hat{\rho}_\ve + 1) \le u_\ve $ in $\Omega,$
it follows from Theorems~\ref{t:rhou} and \ref{t:uv} that $ 0 \le \rho_\ve
\le \hat{\rho}_\ve \le g_{\rm max}$ in $\Omega.$
Hence, \eqref{rhove2g} holds true with $\rho_\ve$ replaced by $\hat{\rho}_\ve.$
Consequently, since $|f(s) - f(t)| \le | s - t|$ for $s, t > 0$, we have 
\begin{align*}
\left| J [u_\ve] - M_{\rm e} f(g_{\rm max}) \right| &= 
\left| \int_\Omega u_\ve f(\hat{\rho}_\ve) \,dx - M_{\rm e} f(g_{\rm max})\right|
\\
& \le \frac{M_{\rm e}}{|S_\ve|}  \int_{S_\ve} | \hat{\rho}_\ve - g_{\rm max} | \, dx
\\
&\le M_{\rm e} \sup_{x \in S_\ve} |\rho_\ve (x) - g_{\rm max} |
\\
&\to 0 
\qquad \mbox{as } \ve \to 0^+. 
\end{align*}
Thus, 
$J[u_\ve]\to M_{\rm e} f(g_{\rm max})$ as $\ve \to 0^+.$ 

It remains to construct $S_\ve$ and prove \eqref{rhove2g} for the linear case. 
We proceed in five steps. 

{\it {Step 1:  Construction of $S_\ve:$ a concentrating boundary cap.}}
Because $\partial\Omega$ is of class $C^2$, there exists 
$\delta_0>0$ such that the inward normal coordinate map
$
F:\partial\Omega\times(0,\delta_0)\to \Omega,$ defined by 
\[
F(z,t)=z-t\nu(z) \qquad \forall (z, t) \in \partial\Omega \times (0, \delta_0),
\]
is a $C^1$-diffeomorphism onto the tubular neighborhood $\{x\in\Omega:0<d(x)<\delta_0\},$
where $\nu(z)$ denotes the outer unit normal at $z\in\partial\Omega$.
For any $\ve > 0$ with $ \ve^3 < \delta_0$, we define
\[
\Gamma_\ve:=\partial\Omega\cap B_{\ve} (y_0) \qquad \mbox{and} 
\qquad
S_\ve:=\{z-t\nu(z): z\in\Gamma_\ve,\ 0<t< \ve^3\}; 
\]
cf.\ Figure~\ref{f:Sve}.
It follows that there exist constants $c,C>0$, independent of $\ve$,  such that
\begin{equation}\label{Sve}
c \ve^2 \le |\Gamma_\ve|_{\partial \Omega} \le C \ve^2 \quad \mbox{and} \quad 
c\,\ve^5\le |S_\ve|\le C\,\ve^5 \qquad \mbox{if } \ve > 0 \ \mbox{is small enough,}
\end{equation}
where $ | \cdot |_{\partial \Omega}$ denotes the surface measure. 

\begin{figure}[h]
\begin{center}
\includegraphics[width=2.4in,height=2.6in]{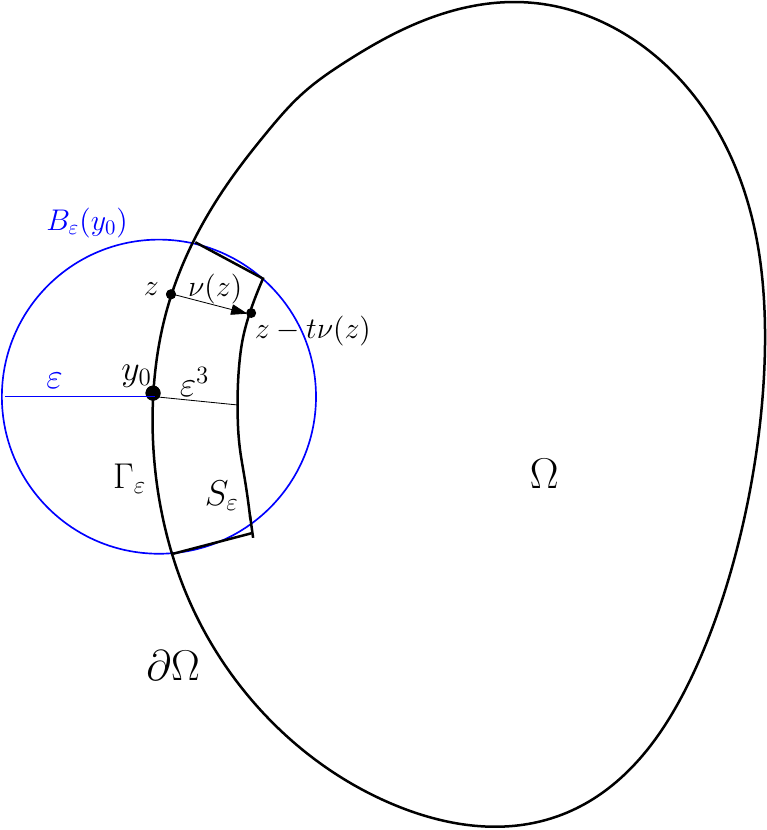}
\end{center}
\vspace{-4 mm}

\caption{Schematic of the construction of $S_\ve.$}
\label{f:Sve}
\end{figure}

{\it{Step 2: Approximation of $\rho_\ve$.}} Let $h\in H^2(\Omega)\hookrightarrow C(\overline{\Omega}) $ be defined by 
$\Delta h=0$ in $\Omega$ and $h = g$ on $\partial\Omega.$ We have 
 $0 \le h \le g_{\rm max}$ in $\Omega.$ 
Since $S_\ve\subset B_{\ve+\ve^3} (y_0)$,  we have 
\begin{equation}\label{h2g}
\sup_{x\in S_\ve} | h(x) - g_{\rm max} | 
\le \sup_{x \in S_\ve} | h(x) - h(y_0)| \to 0
\qquad \mbox{as } \ve \to 0^+.
\end{equation}
Define 
\[
w_\ve=h-\rho_\ve \in H_0^1(\Omega) \qquad \mbox{and} \qquad q_\ve=\sigma + u_\ve \in L^2(\Omega).
\]
We further decompose $w_\ve$ into
\begin{equation}
    \label{wabve}
    w_\ve = a_\ve + b_\ve \qquad \mbox{in } \Omega, 
\end{equation}
where $a_\ve,b_\ve\in H_0^1(\Omega)$ are the unique weak solutions of
\begin{align}
    \label{ave}
&-\Delta a_\ve + q_\ve a_\ve
= \sigma h
\qquad\mbox{in }\Omega,\\
\label{bve}
&-\Delta b_\ve + q_\ve b_\ve
=u_\ve h
\qquad\mbox{in }\Omega, 
\end{align}
respectively. 
Since $\sigma h, u_\ve h \in L^2_+(\Omega)$, we have
$a_\ve, b_\ve \ge 0$ in $\Omega$ and 
$a_\ve,\ b_\ve \in H^2(\Omega)\hookrightarrow C(\overline{\Omega}).$

{\it{Step 3: Estimate $a_\ve$.}} Let $\tau\in H_0^1(\Omega)$ be the unique weak solution of
$-\Delta\tau=1$ in $\Omega$. We have $\tau \in W^{2, p}(\Omega)$ for any $p \in [1, \infty)$, 
and hence $\tau \in C^{1}(\overline{\Omega})$, 
and $\tau \ge 0 $ in $\Omega.$ 
Define $\overline{a}=\sigma g_{\rm max}\tau.$ Since $q_\ve \ge 0$ and $0\le h\le g_{\rm max}$, we have
\[
-\Delta\overline{a} + q_\ve\overline{a}
=
\sigma g_{\rm max} + q_\ve \sigma g_{\rm max} \tau
\ge \sigma g_{\rm max}
\ge \sigma h \qquad \mbox{in } \Omega. 
\]
Thus, $\overline{a}$ is a 
supersolution of \eqref{ave}. 
Hence, 
\begin{equation}\label{ave1}
0\le a_\ve\le \overline{a} =\sigma g_{\rm max}\tau
\qquad\mbox{in }\Omega.
\end{equation}
Note that $\tau$ is Lipschitz-continuous on $\overline{\Omega}$. Let $L\ge 0$ be the Lipschitz constant. 
Fix $x\in\Omega$ and choose $z\in\partial\Omega$ such that $|x-z|=d(x).$ Since $\tau(z)=0$, we have 
\[\tau(x)=|\tau(x)-\tau(z)|
\le L|x-z|
=
Ld(x). 
\]
Therefore, 
$\tau(x)\le Ld(x)$ for all $x \in \Omega.$
If $x\in S_\ve$, then $d(x)\le \ve^3,$ and 
hence $\tau(x) \le L\ve^3.$
This and \eqref{ave1} imply that 
\begin{equation}\label{ave2}
\sup_{x\in S_\ve}a_\ve(x)\le \sup_{x \in S_\ve} 
\sigma g_{\rm max} \tau(x) \le \sigma g_{\rm max} L \ve^3 \to 0 \qquad \mbox{as } \ve \to 0.
\end{equation}

{\it{Step 4: Estimate $b_\ve$.}}
Let $T_\ve\in H_0^1(\Omega)$ be the unique weak solution of
$-\Delta T_\ve=\chi_{S_\ve}$ in $\Omega.$ We have $T_\ve \in H^2(\Omega)\hookrightarrow C(\overline{\Omega}).$
Since $S_\ve\subset\{x\in\Omega:d(x)<\ve^3\}$
for all sufficiently small $\ve>0$, we have by 
Lemma~\ref{l:construction} with $E = S_\ve$ and $\ell = \ve^3$ that 
\begin{equation}\label{Tve}
0\le T_\ve\le \ve^6
\qquad\mbox{in } \Omega. 
\end{equation}
Define $\overline{b}_\ve=g_{\rm max}\alpha_\ve T_\ve,$ where $\alpha_\ve = M_{\rm e}/|S_\ve|$.
Since $u_\ve=\alpha_\ve\chi_{S_\ve}$ and $0\le h\le g_{\rm max}$,
we have 
\[
-\Delta\overline{b}_\ve + q_\ve\overline{b}_\ve
=
g_{\rm max}\alpha_\ve\chi_{S_\ve}
+
q_\ve g_{\rm max}\alpha_\ve T_\ve
\ge
g_{\rm max}\alpha_\ve\chi_{S_\ve}
\ge
u_\ve h \qquad \mbox{in } \Omega. 
\]
Therefore, $\overline{b}_\ve$ is a
supersolution of \eqref{bve}. 
Hence $0\le b_\ve\le \overline{b}_\ve $ 
in $\Omega.$
By \eqref{Sve} and \eqref{Tve}, we thus obtain $0\le b_\ve\le g_{\rm max}\alpha_\ve \ve^6
\le (g_{\rm max} M_{\rm e}/c)\ve$ in $\Omega.$
Hence, 
\begin{equation}\label{bve1}
\sup_{x\in S_\ve}b_\ve(x) \to 0 \qquad \mbox{as } \ve \to 0^+.
\end{equation}

{\it{Step 5: Conclusion.}} By Step 2, we have for every $x\in S_\ve$ that
\[
\rho_\ve(x) = h(x)  - w_\ve(x) 
=
h(x)-a_\ve(x)-b_\ve(x). 
\]
Hence, it follows from \eqref{h2g}, \eqref{ave2}, and \eqref{bve1} that
\[
\sup_{x \in S_\ve} | \rho_\ve (x) - g_{\rm max}|
\le \sup_{x \in S_\ve} | h(x) - g_{\rm max} | + 
\sup_{x\in S_\ve} a_\ve(x)  + \sup_{x\in S_\ve} b_\ve(x) \to 0
\qquad \mbox{as } \ve \to 0^+.
\]
This completes the proof. 
\end{proof}

\section{First and Second Variations}
\label{s:FirstVar}

Let $u \in \cE$ and define
\begin{align}
\label{cAu}
&\cA_u  = \left\{ v \in L^2(\Omega): 
u + t v \in \cE \ 
\mbox{ if } \ |t | \le t_v \ \mbox{ for some }\  t_v > 0 \right\}, 
\\
\label{cAu+}
&\cA_u^+  = \left\{ v \in L^2(\Omega): 
u + t v \in \cE \ 
\mbox{ if } \ 0 \le t \le t_v^+ \ \mbox{ for some } \ t_v^+ > 0 \right\}. 
\end{align}
Note that $\emptyset \ne\cA_u \subseteq \cA_u^+$. 
For any $v \in \cA_u$, we define the first variation 
\[
J'[u][v] = \frac{d}{dt}\biggr|_{t=0} J[ u + t v] = \lim_{t \to 0} \frac{J[u+tv]-J[u]}{t},
\]
if the derivative exists, where the functional $J: \cE \to [0, \infty)$ is defined 
in \eqref{Ju}. If $v \in \cA_u^+$, we define the positive-side first variation 
\[
J'_+[u][v] = \frac{d}{dt}\biggr|_{t=0^+} J[ u + t v] = \lim_{t \to 0^+} \frac{J[u+tv]-J[u]}{t},
\]
if the one-sided derivative exists. Clearly, if $v \in \cA_u$ and its first variation
$J'[u][v]$ exists, then $v \in \cA_u^+$ and its positive-side first variation $J'_+[u][v]$
exists and is the same as $J'[u][v].$ 

The  following lemma will be used for calculating the first and second variations: 

\begin{lemma}
    \label{l:Lu} 
    Let $u \in L^2_+(\Omega)$ 
    and $\rho_u = \cT_g(u) \in H_g^1(\Omega)$  as defined in Theorem~\ref{t:rhou}.
    \begin{compactenum}
    \item[\rm (1)]
For any $w \in L^2(\Omega)$, there exists a unique weak solution, $ \psi \in H_0^1(\Omega) \cap 
    H^2(\Omega)$, to the boundary-value problem
    \begin{equation}
    \label{BVPpsi}
    \left\{ 
    \begin{aligned}
        & - \Delta \psi + \sigma \psi + u f'(\rho_u) \psi = w & &\quad  \mbox{in } \Omega, \\
        & \psi= 0 & &\quad  \mbox{on } \partial \Omega,
    \end{aligned}
    \right.
    \end{equation}
defined by $\psi \in H_0^1(\Omega)$ and 
    \begin{align}
    \label{psiw}
        \int_{\Omega} \left[ \nabla \psi \cdot \nabla \phi + \sigma \psi \phi + u f'(\rho_u) \psi \phi \right] dx
        = \int_\Omega w \phi \, dx \qquad \forall \phi \in H_0^1(\Omega). 
    \end{align}
The operator $\cL_u = (-\Delta + \sigma + u f'(\rho_u))^{-1}: L^2(\Omega) \to H_0^1(\Omega)\cap H^2(\Omega)$ defined by 
$\cL_u w = \psi $ is linear and self-adjoint. 
\item[\rm (2)]
Let  $\psi_u = \cL_u (uf'(\rho_u))
\in H_0^1(\Omega)\cap H^2(\Omega)$ as defined in Part (1). 
Then $0 \le \psi_u \le 1$ in $\Omega$  and   $\{ \psi_u < 1 \}$ is dense in $\Omega.$
If $| \{ u > 0 \}| > 0$, then $|\{ u > 0 \} \cap \{ \psi_u < 1 \} | > 0.$
\end{compactenum}
\end{lemma}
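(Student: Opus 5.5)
For Part (1), I will use Lax--Milgram on $H_0^1(\Omega)$ with the bilinear form
\[
B(\psi,\phi)=\int_\Omega\left[\nabla\psi\cdot\nabla\phi+\sigma\psi\phi+uf'(\rho_u)\psi\phi\right]dx .
\]
\emph{Well-definedness and coercivity.} By Theorem~\ref{t:rhou}, $0\le\rho_u\le g_{\rm max}$, so $f'(\rho_u)$ equals $1$ or $1/(\rho_u+1)^2$. In either case $0\le f'(\rho_u)\le 1$, and the coefficient $q:=uf'(\rho_u)$ lies in $L^2_+(\Omega)$. In three dimensions $H^1\hookrightarrow L^6$, so $|\int q\psi\phi|\le\|q\|_{L^2}\|\psi\|_{L^4}\|\phi\|_{L^4}$ and $B$ is bounded. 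Coercivity follows from $q\ge0$, since then $B(\psi,\psi)\ge\min\{1,\sigma\}\|\psi\|_{H^1}^2$. Lax--Milgram therefore gives a unique $\psi$ satisfying \eqref{psiw}, and $\cL_u$ is linear.

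\emph{Regularity.} I will write $-\Delta\psi=w-\sigma\psi-q\psi$. Since $\psi\in L^6$ and $q\in L^2$, the right-hand side is only in $L^{3/2}$ at first. Bootstrapping via $W^{2,3/2}\hookrightarrow L^\infty$ in 3D gives $\psi\in L^\infty$. Then $q\psi\in L^2$, and Theorem~8.12 of \cite{GilbargTrudinger98} yields $\psi\in H^2(\Omega)$.

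\emph{Self-adjointness.} This follows from the symmetry of $B$: testing the equation for $\cL_u w_1$ with $\cL_u w_2$ and vice versa gives $\int w_1\cL_u w_2=B(\cL_u w_2,\cL_u w_1)=\int w_2\cL_u w_1$.

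For Part (2), set $q=uf'(\rho_u)\ge0$, so that $\psi_u$ solves $-\Delta\psi_u+\sigma\psi_u+q\psi_u=q$ with zero boundary values.

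\emph{Lower bound $\psi_u\ge0$.} This follows from the weak maximum principle, since the right-hand side $q$ is nonnegative.

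\emph{Upper bound $\psi_u\le1$.} I will set $\eta=1-\psi_u$. Then $\eta=1$ on $\partial\Omega$ and $-\Delta\eta+(\sigma+q)\eta=\sigma\ge0$. Testing with $\eta^-\in H_0^1(\Omega)$ gives $\int|\nabla\eta^-|^2+(\sigma+q)(\eta^-)^2\le0$, so $\eta^-=0$. (If $q$ is unbounded, I will either use this direct test-function argument or truncate $u$ as in the proof of Theorem~\ref{t:uv}.)

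\emph{Density of $\{\psi_u<1\}$.} Suppose an open ball $B\subset\Omega$ had $\psi_u\equiv1$ on $B$. Then on $B$ we get $-\Delta\eta+(\sigma+q)\eta=0$ with $\eta\equiv0$, which forces $\sigma=0$ there, a contradiction. This is cleaner than invoking the strong maximum principle, and it needs only $H^2$ regularity, giving $\Delta\eta=0$ a.e.\ on $B$. Hence $\{\psi_u<1\}$ is dense; it is also open by continuity.

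\emph{Last claim.} If $|\{u>0\}|>0$ but $\psi_u=1$ a.e.\ on $\{u>0\}$, I will use that for $H^2$ functions $\Delta\psi_u=0$ a.e.\ on the level set $\{\psi_u=1\}$ (Stampacchia). Evaluating the equation a.e.\ on $\{u>0\}\cap\{\psi_u=1\}$ then gives $\sigma+q=q$, i.e.\ $\sigma=0$, a contradiction.

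The main obstacle is the regularity and maximum-principle steps with a coefficient $q$ that is only in $L^2$. I will handle these via the test-function arguments above together with the Sobolev bootstrap, and the Stampacchia level-set fact is the key tool for the final measure-theoretic claim.
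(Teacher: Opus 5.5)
Your plan works except for one false step in the regularity bootstrap. In three dimensions $W^{2,3/2}(\Omega)$ does \emph{not} embed into $L^\infty(\Omega)$. Here $kp = 2\cdot\tfrac32 = 3 = n$, which is the borderline case with $p>1$. In that case you only get $W^{2,3/2}\hookrightarrow W^{1,3}\hookrightarrow L^p$ for every $p<\infty$; for instance, a cut-off $\log\log(1/|x|)$ lies in $W^{2,3/2}$ near the origin but is unbounded. So from $q\psi\in L^{3/2}$ you cannot reach $\psi\in L^\infty$ in one step. The repair is one more iteration, which is what the paper does:
\begin{itemize}
\item $\psi\in W^{2,3/2}\hookrightarrow L^9$, so $q\psi\in L^{18/11}$;
\item since $2\cdot\tfrac{18}{11}>3$, Theorem~9.15 of \cite{GilbargTrudinger98} gives $\psi\in W^{2,18/11}\hookrightarrow C(\overline{\Omega})$;
\item only then is $q\psi\in L^2$, and your appeal to $H^2$ regularity becomes valid.
\end{itemize}
Any intermediate exponent in $(3/2,2)$ would serve in place of $18/11$.

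The rest is correct. Lax--Milgram in Part (1), versus the paper's direct method, is an inessential difference. Part (2), however, takes a genuinely different route.

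\emph{Bounds $0\le\psi_u\le 1$.} The paper uses the variational characterization: the truncation $\max\{0,\min\{\psi_u,1\}\}$ does not increase $K_u$, so by uniqueness of the minimizer it equals $\psi_u$. Your direct testing with $\psi_u^-$ and with $\eta^-=(\psi_u-1)^+$ is equally valid and needs no bound on $q$. For the lower bound, though, use that same test-function argument rather than citing the classical weak maximum principle, which assumes bounded coefficients.

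\emph{Last claim.} The paper notes that $\psi_u=1$ a.e.\ on $\{u>0\}$ forces $q(\psi_u-1)=0$ a.e. Hence $-\Delta\psi_u+\sigma\psi_u=0$, so $\psi_u=0$, so $uf'(\rho_u)=0$, so $u=0$ a.e. Your Stampacchia argument (that $D^2\psi_u=0$ a.e.\ on $\{\psi_u=1\}$ for $\psi_u\in H^2$) is shorter and actually proves more. Evaluating the equation a.e.\ on $\{\psi_u=1\}$ gives $\sigma=0$ there, so $|\{\psi_u=1\}|=0$, i.e.\ $\psi_u<1$ a.e.\ in $\Omega$. This yields the density statement and the last claim at once, without using $f'>0$.
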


\begin{proof}
(1) Note that $f'(\rho_u)\in L^\infty(\Omega)$ and hence $u f'(\rho_u) \in L^2(\Omega). $ Moreover, 
$u f'(\rho_u) \ge 0$ a.e.\ in $\Omega,$ $|f'(\rho_u)|\le 1$ in $\Omega$, and 
$H^1(\Omega)\hookrightarrow L^6(\Omega) \subset L^4(\Omega).$
Thus, by the direct method, we infer that there exists a unique
$\psi\in H_0^1(\Omega)$ that minimizes the functional $K_u: H_0^1(\Omega) \to \R\cup \{ \infty \}$, defined by 
\[
K_u[\eta] = \int_\Omega \left[\frac12 |\nabla \eta|^2 
+ \frac{\sigma}{2} \eta^2 + \frac12 u f'(\rho_u) \eta^2 - w\eta \right]dx. 
\]
Routine calculations imply that $\psi$ satisfies \eqref{psiw}.
Suppose $\tilde{\psi}
\in H_0^1(\Omega)$ also satisfies \eqref{psiw} with $\tilde{\psi}$ replacing $\psi.$
Let $\phi = \psi - \tilde{\psi} \in H_0^1(\Omega).$ By \eqref{psiw} and
the related equation for $\tilde{\psi}$ with this $\phi$, we can see that $\psi = \tilde{\psi}$. 
This implies the uniqueness of the weak solution. 

To show $\psi \in H^2(\Omega)$, we first notice that $u \psi \in L^{3/2}(\Omega)$, 
which follows from $u \in L^2(\Omega)$, $\psi \in H^1(\Omega)
\hookrightarrow L^6(\Omega)$, and H{\"o}lder's inequality. Consequently, since 
$f'(\rho_u) \in L^\infty(\Omega)$, $\Delta \psi \in L^{3/2}(\Omega).$ Hence, 
$\psi\in W^{2, 3/2}(\Omega)$ (cf.\ Theorem~9.15 in \cite{GilbargTrudinger98}). 
Note that $ W^{2, 3/2}(\Omega) \hookrightarrow W^{1, 3} (\Omega)\hookrightarrow L^p
(\Omega)$ for 
any $ p \in [1, \infty).$ Choose $p = 9.$  
We get $\psi \in L^9(\Omega)$ and  $uf'(\rho_u)\psi \in L^{18/11}(\Omega)$. 
Applying Theorem 9.15 in \cite{GilbargTrudinger98} again, we get $\psi \in W^{2,18/11}(\Omega) \hookrightarrow C(\overline{\Omega})$. Thus, $uf'(\rho_u)\psi \in L^{2}(\Omega)$. One final application of Theorem 9.15 in \cite{GilbargTrudinger98} gives
$\psi \in H^2(\Omega).$

It is clear that $\cL_u: L^2(\Omega) \to H_0^1(\Omega)\cap H^2(\Omega)$ is linear and self-adjoint.

(2)
Note from Part (1) that 
$\psi_u $ is the unique minimizer of $K_u: H_0^1(\Omega) \to \R\cup \{ \infty \}$, defined by
\[
K_u[\eta] = \int_\Omega \left[\frac12 |\nabla \eta|^2 
+ \frac{\sigma}{2} \eta^2 + \frac12 u f'(\rho_u) \eta^2 - u f'(\rho_u) \eta \right]dx 
\qquad \forall \eta \in H_0^1(\Omega).
\]
Define $\psi = \max \{ 0, \min \{ \psi_u, 1 \} \} \in H_0^1(\Omega)$. Clearly, $0 \le \psi \le 1$
a.e.\ in $\Omega$.  Since $ \psi_u = \psi$ on $\{ x \in \Omega: 0 \le \psi_u \le 1\}$, 
and  
$| \nabla \psi_u | \ge | \nabla \psi |$ and $u f'(\rho_u) \ge 0$ 
a.e.\ in $\Omega$, we  have 
\begin{align*}
0 & \ge K_u[\psi_u] - K_u [\psi] \nonumber \\
& \ge \int_{ \{ \psi_u <  0 \}} 
\left[ \frac{\sigma}{2} \psi_u^2  + \frac{1}{2} u f'(\rho_u) 
 \psi_u^2  - uf'(\rho_u) \psi_u  \right] dx\\
&\qquad + 
\int_{ \{ \psi_u > 1 \}} 
\left[ \frac{\sigma}{2} \left( \psi_u^2 - 1 \right) + \frac{1}{2} u f'(\rho_u) 
\left( \psi_u^2-  1 \right)  - uf'(\rho_u) ( \psi_u - 1 )  \right] dx
\nonumber \\
&   =\int_{ \{ \psi_u <  0 \}} 
\left[ \frac{\sigma}{2} \psi_u^2  + \frac{1}{2} u f'(\rho_u) \psi_u (\psi_u - 2) \right] dx
\\
&\qquad + \int_{ \{ \psi_u > 1\}} 
\left[ \frac{\sigma}{2} \left( \psi_u^2 - 1\right) 
+ \frac{1}{2} u f'(\rho_u) \left( \psi_u -1 \right)^2  \right]dx 
\nonumber \\
&\ge 0. 
\end{align*}
Thus, $K_u[\psi_u] = K_u [\psi]$ and hence  $\psi$ is also a minimizer of $K_u$ over $H_0^1(\Omega)$. The
uniqueness of such a minimizer implies that $\psi_u = \psi$. Moreover, each of 
the last two integrals above vanishes, implying that $|\{ \psi_u < 0 \}| = 0$
and $|\{ \psi_u > 1 \} | = 0.$ Thus, 
$0 \le \psi_u = \psi \le 1 $ a.e.\ in $\Omega.$ As $\psi_u \in H^2(\Omega) \hookrightarrow C(\overline{\Omega})$, this a.e. estimate upgrades to the point-wise bound $0 \le \psi_u\le 1$ in $\Omega$. 

If $\{ \psi_u < 1\}$ is not dense in $\Omega$, then there exists a non-empty open set 
$B \subset \Omega$ such that $\psi_u \ge 1$ and hence $\psi_u = 1$ on $B.$ Thus, by
\eqref{BVPpsi} with $w = u f'(\rho_u)$, we have $\sigma = 0$ in $B$, contradicting
the fact that $\sigma > 0.$

Assume $|\{ u > 0 \}| > 0$. We show that $| \{ u > 0 \} \cap \{ \psi_u < 1 \} | > 0.$
If otherwise $| \{ u > 0 \} \cap \{ \psi_u < 1 \} | = 0$, then $ \psi_u = 1 $ a.e.\ in $\{ u > 0 \}.$
Consequently, by \eqref{BVPpsi} with $\psi_u$ replacing $\psi$
and $uf'(\rho_u)$ replacing $w$, 
$-\Delta \psi_u + \sigma \psi_u = 0$ a.e.\ in $\Omega$. But $\psi_u \in H_0^1(\Omega)$. Thus, 
$\psi_u = 0$ in $\Omega$. This then implies by \eqref{BVPpsi}  with $w = u f'(\rho_u) \ge 0$ that
$ u f'(\rho_u) = 0$ a.e.\ in $\Omega$. As $f'(\rho_u) \neq 0$ in $\Omega$, we get $u = 0$ a.e.\ in $\Omega$, 
contradicting $| \{ u > 0 \} | > 0$. 
\end{proof}

We now calculate the first variation of the functional $J: \cE \to \R.$

\begin{theorem}
    \label{t:FirstVariations}
   Let $u \in \cE$. Let $\rho_u = \cT_g(u)\in H_g^1(\Omega)\cap H^2(\Omega)$ and
$\psi_u = \cL_u (uf'(\rho_u)) \in H_0^1(\Omega)\cap H^2(\Omega)$ be given as in 
Theorem~\ref{t:rhou} and 
Lemma~\ref{l:Lu}, respectively. 
If $v \in \cA_u$, then $J'[u][v]$ exists and is given by 
    \begin{equation}
    \label{Jpuv}
    J'[u][v] = \int_\Omega f(\rho_u) ( 1 - \psi_u) v \, dx.
    \end{equation}
If $ v \in \cA_u^+$, then the positive-side first variation $J'_+[u][v]$ exists
and is given by the right-hand side of \eqref{Jpuv}. In the case that $v \in \cA_u$, 
we have $v \in \cA_u^+$ and $J'_+[u][v] = J'[u][v].$
\end{theorem}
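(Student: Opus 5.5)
We will compute the one-sided difference quotient directly: the one-sided case $v\in\cA_u^+$ is then the basic computation, and the two-sided case follows by running the same argument for $t<0$. Fix $v\in\cA_u^+$ and $0<t\le t_v^+$, and write $u_t=u+tv\in\cE$, $\rho_t=\cT_g(u_t)$, $\eta_t=(\rho_t-\rho_u)/t\in H_0^1(\Omega)\cap H^2(\Omega)$. Subtracting the weak forms \eqref{LinftyH01} for $u_t$ and $u$ gives
\[
-\Delta\eta_t+\sigma\eta_t+u\,\frac{f(\rho_t)-f(\rho_u)}{\rho_t-\rho_u}\,\eta_t=-v f(\rho_t)\quad\mbox{in }\Omega .
\]
We interpret the difference quotient of $f$ as $f'(\rho_u)$ wherever $\rho_t=\rho_u$; it equals $1$ in the linear case and $1/((\rho_t+1)(\rho_u+1))$ in the nonlinear case. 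Hence it is bounded between $0$ and $1$, since $\rho_t,\rho_u\ge0$ by Theorem~\ref{t:rhou}. Then
\[
\frac{J[u_t]-J[u]}{t}=\int_\Omega v f(\rho_t)\,dx+\int_\Omega u\,\frac{f(\rho_t)-f(\rho_u)}{t}\,dx .
\]

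First we show $\eta_t\to\eta:=-\cL_u(vf(\rho_u))$ in $H^1(\Omega)$ as $t\to0^+$. Testing the $\eta_t$-equation with $\eta_t$ and using the nonnegativity of the zeroth-order coefficient gives $\min\{1,\sigma\}\|\eta_t\|_{H^1}^2\le\|v\|_{L^2}f(g_{\rm max})\|\eta_t\|_{L^2}$, so $\{\eta_t\}$ is bounded in $H^1(\Omega)$. By Lemma~\ref{l:rhouk}(1), $\rho_t\to\rho_u$ in $H^1(\Omega)$, because $u_t\to u$ in $L^2(\Omega)$. Along a subsequence we also have a.e.\ convergence, so the coefficient converges a.e.\ to $f'(\rho_u)$ and stays bounded. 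Take a subsequence with $\eta_t\rightharpoonup\tilde\eta$ in $H^1(\Omega)$, strongly in $L^4(\Omega)$. For $\phi\in H_0^1(\Omega)\cap L^\infty(\Omega)$ we can pass to the limit in $\int u\cdot(\text{coeff})\,\eta_t\phi$ using $u\in L^2(\Omega)$, the strong $L^2$ convergence of $\eta_t$, and dominated convergence applied to $u\phi\cdot(\text{coeff})$. This shows $\tilde\eta$ solves \eqref{psiw} with $w=-vf(\rho_u)$. By the uniqueness in Lemma~\ref{l:Lu}(1), $\tilde\eta=\eta$, and so the whole family converges, at least weakly in $H^1(\Omega)$ and strongly in $L^4(\Omega)$.

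Next we pass to the limit in the quotient. The first integral tends to $\int vf(\rho_u)\,dx$. We write the second integral as $\int u\cdot(\text{coeff})\,\eta_t\,dx$. Since $\eta_t\to\eta$ strongly in $L^2(\Omega)$ and $u\cdot(\text{coeff})\to uf'(\rho_u)$ in $L^2(\Omega)$ (dominated by $u$), the second integral tends to $\int uf'(\rho_u)\eta\,dx$. We then apply the self-adjointness of $\cL_u$ from Lemma~\ref{l:Lu}:
\[
\int uf'(\rho_u)\eta\,dx=-\int uf'(\rho_u)\,\cL_u(vf(\rho_u))\,dx=-\int \cL_u(uf'(\rho_u))\,vf(\rho_u)\,dx=-\int \psi_u v f(\rho_u)\,dx .
\]
The self-adjointness identity can be checked by testing the equation for $\psi_u$ with $\eta$ and the equation for $\eta$ with $\psi_u$. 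Adding the two limits gives \eqref{Jpuv} for $J'_+[u][v]$. For $v\in\cA_u$ the same argument with $t\to0^-$, where $u+tv\in\cE$ for $|t|\le t_v$, gives the same limit, so $J'[u][v]$ exists and equals $J'_+[u][v]$.

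The main obstacle is the limit $\eta_t\to\eta$, specifically identifying the limit of the nonlinear coefficient term. Here $u$ is only in $L^2(\Omega)$, so $u\,\eta_t\phi$ must be controlled by Hölder's inequality with $\eta_t\in L^6(\Omega)$ (from $H^1(\Omega)$) and an a.e.\ convergent bounded coefficient. One might worry that the a.e.\ convergence of $\rho_t$ holds only along subsequences. This is handled by the subsequence-of-subsequence argument, as in Lemma~\ref{l:rhouk}.
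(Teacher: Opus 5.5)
Your proposal is correct and follows essentially the same route as the paper: an $O(|t|)$ energy bound on the difference quotient $(\rho_t-\rho_u)/t$, identification of its weak $H^1$ limit as $\cL_u(-vf(\rho_u))$ by the subsequence-plus-uniqueness argument, and self-adjointness of $\cL_u$ to turn $\int_\Omega u f'(\rho_u)\,\cL_u(-vf(\rho_u))\,dx$ into $-\int_\Omega \psi_u v f(\rho_u)\,dx$. The differences are only cosmetic. You package the nonlinear term as a single bounded difference-quotient coefficient handled by dominated convergence, where the paper splits it into pieces and uses H\"older/$L^4$ bounds. You also treat the one-sided case as primary, where the paper treats the two-sided case as primary.
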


\begin{proof}
    We only prove \eqref{Jpuv} for the case $v \in \cA_u$ as the case that 
    $v \in \cA_u^+$ is similar (replacing $|t| \le t_v$ and $t\to 0$ by $0 \le t \le t_v^+$ and 
    $t\to 0^+$, respectively.) We divide our proof into three steps. 
 
{\it Step 1.} Since $v\in \cA_u$, there exists $t_v > 0$ such that $u+tv \in \cE$
if $|t| \le t_v.$
Denote $\rho = \rho_u$ and $\rho_t = \rho_{ u + t v} = \cT_g(u+tv)$ $(|t|\le t_v).$ We show that 
\begin{equation}
\label{rhot2rho}
\rho_t \to \rho \quad \mbox{in } H^1(\Omega) \quad \mbox{as } t \to 0.
\end{equation}
Note by Theorem~\ref{t:rhou} that 
$0 \le  \rho, \rho_t < g_{\rm max}$ 
in $\Omega$. 
Since $f$ increases in $[0, \infty)$, we  have
$0\le f(\rho), f(\rho_t) < f(g_{\rm max}) $ in $\Omega.$
Subtracting the weak formulations of $\rho_t$ and $\rho$ (cf.\ \eqref{LinftyH01}), we get
\begin{align}
\label{rhotrho}
&\int_\Omega \left[ \nabla (\rho_t-\rho) \cdot\nabla \phi  + \sigma (\rho_t-\rho)\phi + u (f(\rho_t)-f(\rho))\phi \right] dx 
\nonumber \\
&\qquad = -\int_{\Omega} tvf(\rho_t)\phi\, dx
\qquad \forall \phi \in H_0^1(\Omega). 
\end{align}
Setting $\phi=\rho_t-\rho \in H_0^1(\Omega) \cap H^2(\Omega)$
and noting that 
$u [ f(\rho_t) - f(\rho) ] (\rho_t - \rho) \ge 0$ a.e.\ in $\Omega$, we obtain for both the 
linear and nonlinear case that 
\begin{align*}
\min\{ 1, \sigma\} \| \rho_t - \rho \|^2_{H^1(\Omega)}
& \le 
\int_\Omega \left[ |\nabla (\rho_t-\rho)|^2  + \sigma (\rho_t-\rho)^2 \right] dx \\
&\le \left| -\int_{\Omega} tv f(\rho_t) (\rho_t - \rho) \, dx  \right| \\
&\le  |t|  f(g_{\rm max}) \,  \| v\|_{L^2(\Omega)} \| \rho_t - \rho \|_{L^2(\Omega)}. 
\end{align*}
Consequently, 
\begin{equation}
    \label{rtr}
\| \rho_t - \rho \|_{H^1(\Omega)} \le \frac{ f(g_{\rm max})}{\min\{ 1, \sigma\} }\| v \|_{L^2(\Omega)} | t| \qquad
\mbox{if }
|t|\le t_v. 
\end{equation}
This leads to \eqref{rhot2rho}.

{\it Step 2.} Let 
\[
\hat{\psi}_t = \frac{1}{t} \left( \rho_t - \rho \right) \in H_0^1(\Omega)\cap
L^\infty(\Omega) \qquad \mbox{for } 0 < |t| \le t_v. 
\]
Let $\hat{\psi}_{u, v} =\cL_u (- v f(\rho_u)) \in H_0^1(\Omega)\cap H^2(\Omega)$ be
given as in Lemma~\ref{l:Lu}.
We  show that 
\begin{equation}
    \label{psithatpu}
\hat{\psi}_t \to \hat{\psi}_{u, v}  \qquad \mbox{ weakly in } H^1(\Omega) \quad \mbox{as } t \to 0.
\end{equation}
It suffices to show that for any sequence $\{ t_n \}$ with $0 < | t_n | \le t_v$ and $t_n \to 0$
there exists a subsequence, not relabeled, such that $\hat{\psi}_{t_n} \to \hat{\psi}_{u, v}$ weakly
in $H_0^1(\Omega). $
Given $0 < |t_n | \le t_v$ with $t_n \to 0$. 
Since 
$\sup_{0 < |t| \le t_v} \| \hat{\psi}_t \|_{H^1(\Omega)} < \infty $ by \eqref{rtr}, 
there exists a  subsequence of $\{ t_n \}$, not relabeled, and $\psi \in H_0^1(\Omega)$ such that 
\begin{equation}
    \label{psitn2psi}
\hat{\psi}_{t_{n}} \to \psi \quad \mbox{weakly in } H^1(\Omega), 
\quad \mbox{strongly in } L^2(\Omega), \quad \mbox{and} \quad 
\mbox{a.e.\ in } \,   \Omega.
\end{equation} 
Moreover, passing to a further subsequence if necessary, we have by \eqref{rhot2rho} that 
\begin{equation}
    \label{rhotn}
    \rho_{t_n} \to \rho \qquad \mbox{strongly in } H^1(\Omega) \quad \mbox{and} \quad \mbox{ a.e. in }\,  \Omega. 
\end{equation}
We need to show that ${\psi} = \hat{\psi}_{u, v}$ in $H_0^1(\Omega).$

By \eqref{rhotrho}, we have 
for any $n \ge 1$ that
\begin{align}
\label{psitn}
\int_\Omega \left[ \nabla \hat{\psi}_{t_n} \cdot \nabla \phi  + \sigma \hat{\psi}_{t_n} \phi 
+ u \frac{f(\rho_{t_n})-f(\rho)}{t_n}\phi \right] dx & = -\int_{\Omega} vf(\rho_{t_n})\phi\, dx
\qquad \forall \phi \in H_0^1(\Omega).
\end{align}
Fix any $\phi \in H_0^1(\Omega)\cap L^\infty(\Omega)$. By \eqref{psitn2psi}, 
\[
\int_\Omega \left( \nabla \hat{\psi}_{t_n} \cdot \nabla \phi  + \sigma \hat{\psi}_{t_n} \phi \right) dx 
\to \int_\Omega \left( \nabla \psi   \cdot \nabla \phi  + \sigma {\psi}  \phi \right) dx. 
\]
For the linear case, $f(s) = s$ and $f'(s) = 1$ $(s\in \R)$. We then have by \eqref{psitn2psi}
and \eqref{rhotn} that 
\begin{align*}
    & \int_\Omega u \frac{f(\rho_{t_n})-f(\rho)}{t_n}\phi\, dx = 
    \int_\Omega u \hat{\psi}_{t_n} \phi \, dx \to 
    \int_\Omega u {\psi} \phi\, dx = \int_\Omega u f'(\rho) {\psi} \phi\, dx, 
    \\
    & -\int_\Omega v f(\rho_{t_n} ) \phi\, dx \to -\int_\Omega v f(\rho) \phi\, dx. 
\end{align*}
For the nonlinear case, $f(s) = s/(s+1)$ and $f'(s) = 1/(s+1)^2$ $(s > -1).$ We have 
\begin{align*}
 \int_{\Omega}u \frac{f(\rho_{t_{n}})-f(\rho)}{{t_{n}}}\phi \, dx
& 
=\int_{\Omega}u \frac{\rho_{t_{n}}-\rho}{{t_{n}}(\rho_{t_{n}}+1)(\rho+1)}\phi\, dx \\
&=\int_{\Omega} {u\phi} \frac{\hat{\psi}_{t_n}} {(\rho_{t_{n}}+1)(\rho+1)} \, dx 
\\
&= \int_\Omega \frac{u \phi}{(\rho_{t_n}+1) (\rho+1)} \left( \hat{\psi}_{t_n} - {\psi} \right)\, dx
\\
&\quad + \int_\Omega \frac{ u \phi \psi}{ (\rho + 1)} \left( \frac{1}{\rho_{t_n} + 1 }
-\frac{1}{ \rho+1} \right)dx + \int_\Omega \frac{u{\psi} \phi}{(\rho+1)^2}\, dx. 
\end{align*}
By \eqref{psitn2psi}, we get
\[
\left| \int_\Omega \frac{u \phi}{(\rho_{t_n}+1) (\rho+1)} \left( \hat{\psi}_{t_n} - \psi \right)\, dx 
\right|
\le \| u \phi \|_{L^2(\Omega)} \| \hat{\psi}_{t_n} - {\psi} \|_{L^2(\Omega)} \to 0.
\]
By H{\"o}lder's inequality, the embedding $H^1(\Omega) \hookrightarrow L^4(\Omega)$, and 
\eqref{rhotn}, we get for some constant $C > 0$ that 
\begin{align*}
\left| \int_\Omega \frac{ u \phi {\psi}}{ (\rho + 1)} \left( \frac{1}{\rho_{t_n} + 1 }
-\frac{1}{ \rho+1} \right)dx \right| 
& \le \| \phi\|_{L^\infty(\Omega)} \| u \|_{L^2(\Omega)} \| \psi \|_{L^4(\Omega)} 
\| \rho_{t_n} - \rho \|_{L^4(\Omega)} \\
& 
\le C \| \phi\|_{L^\infty(\Omega)} \| u \|_{L^2(\Omega)} \|\psi \|_{H^1(\Omega)}
\| \rho_{t_n} - \rho \|_{H^1(\Omega)}\\
&\to 0 \qquad \mbox{as } n \to \infty.
\end{align*}
Therefore, 
\[
\int_{\Omega}u \frac{f(\rho_{t_{n}})-f(\rho)}{{t_{n}}}\phi \, dx
\to \int_\Omega \frac{u{\psi} \phi}{(\rho+1)^2}\, dx = 
\int_\Omega u f'(\rho) {\psi} \phi\, dx. 
\]
Similarly, we have 
\[
-\int_\Omega v f(\rho_{t_n}) \phi \, dx 
\to -
\int_{\Omega}v f(\rho)\phi \, dx. 
\]
Combining all these limits, we obtain 
\begin{align*}
\int_\Omega \left[ \nabla \psi \cdot \nabla \phi  + \sigma \psi \phi 
+ u f'(\rho) \psi \phi\right] dx & = -\int_{\Omega} vf(\rho)\phi\, dx
\qquad \forall \phi \in H_0^1(\Omega) \cap L^\infty(\Omega)
\end{align*}
for both the linear and nonlinear case. 
Comparing this with the weak formulation for $\hat{\psi}_{u, v} = \cL_u(-v f(\rho_u))$, 
and recalling our notation $\rho = \rho_u$, 
we obtain  $\psi = \hat{\psi}_{u, v}$.
Hence, \eqref{psithatpu} holds true. 

{\it Step 3.}
We finally prove \eqref{Jpuv}. For the linear case, we have by \eqref{psithatpu} and
the compact embedding $H^1(\Omega)\hookrightarrow L^2(\Omega)$ that 
\begin{align}
\label{frhot1}
    \frac{f(\rho_t) - f(\rho)}{t} - f'(\rho) \hat{\psi}_{u, v} = \frac{\rho_t - \rho}{t} - \hat{\psi}_{u, v}
   = \hat{\psi}_t - \hat{\psi}_{u, v} \to 0
    \qquad \mbox{in } L^2(\Omega) \quad \mbox{as } t \to 0.
\end{align}
For the nonlinear case, we have by \eqref{rhot2rho}, \eqref{psithatpu}, and 
the compact embedding $H^1(\Omega)\hookrightarrow L^2(\Omega)$ that 
\begin{align}
\label{frhot2}
   & \frac{f(\rho_t)-f(\rho)}{t} - f'(\rho) \hat{\psi}_{u, v} 
  \nonumber  \\
   &\qquad = \left( \frac{\rho_t-\rho}{t} - \hat{\psi}_{u,v} \right) \frac{1}{(\rho_t + 1)(\rho+1)}
   + \hat{\psi}_{u, v} \frac{\rho-\rho_t}{(\rho+1)^2 (\rho_t+1)}
   \nonumber \\
   &\qquad 
 \to 0 \qquad \mbox{in } L^2(\Omega) \quad \mbox{as } t \to 0.
\end{align}
Consequently, by our notations
$ \rho = \rho_u$,  $\hat{\psi}_{u, v} = \cL_u ( - v f(\rho))$,
and $\psi_u = \cL_u ( u f'(\rho_u))$, and Lemma~\ref{l:Lu}, we have  
\begin{align*}
    J'[u][v]&=\lim_{t \to 0} \frac{J[u+tv]-J[u]}{t}\\
    &=\lim_{t\to 0} \frac{1}{t}\int_{\Omega} [(u+tv)f(\rho_t)-uf(\rho)] \,dx\\
    &=\lim_{t\to 0}\int_{\Omega} u\left( \frac{f(\rho_t)-f(\rho)}{t} \right) \, dx  
    +\lim_{t \to 0} \int_{\Omega} v(f(\rho_t)-f(\rho)) \, dx
    +\int_{\Omega} vf(\rho) \, dx\\
    &=\int_{\Omega} [uf'(\rho_u) \hat{\psi}_{u, v} + vf(\rho_u)] \, dx
    \\
    & = \int_\Omega \left[ uf'(\rho_u) \cL_u (-v f(\rho_u)) + v f(\rho_u) \right]dx
    \\
      & = \int_\Omega \left[ -v f(\rho_u ) \cL_u (uf'(\rho_u)) + v f(\rho_u) \right]dx
    \\
    & = \int_\Omega v f(\rho_u) (1 - \psi_u) \, dx. 
\end{align*}
This is \eqref{Jpuv}.
The proof is complete.
\end{proof}

Let $u \in \cE$. Let $v \in \cA_u$ be such that $u + t v \in \cE$
if $|t|\le t_v$ for some $t_v > 0.$ Fix any $t \in (-t_v, t_v).$  
Then $u + t v \in \cE$ and $v \in \cA_{u+tv}$. Hence, $J'[u+tv][v]$ exists
by Theorem~\ref{t:FirstVariations}. 
We define the second variation of $J:
\cE\to \R$ at $u$ in the direction of $v$ by 
\[ 
J''[u][v]=\lim_{t \to 0} \frac{J'[u+tv][v]-J'[u][v]}{t}, 
\] 
if the limit exists. Similarly, for any $v \in \cA_u^+$ and $t > 0$ small enough, 
$u + tv \in \cE$ and $v \in \cA_{u+tv}^+$. Moreover, 
$J'_+[u+tv][v]$ exists. We define the 
positive-side second variation 
\[ 
J''_+[u][v]=\lim_{t \to 0+} \frac{J'_+[u+tv][v]-J'_+[u][v]}{t}, 
\] 
if the limit exists. 

\begin{theorem}
\label{t:SecondVariations}
Let $u \in \cE$ and $ v \in L^2(\Omega)$. 
Let $\rho_u = \cT_g(u)\in H_g^1(\Omega) \cap H^2(\Omega)$,  
$\psi_u = \cL_u (uf'(\rho_u)) \in H_0^1(\Omega)\cap H^2(\Omega)$, and
 $\hat{\psi}_{u, v} =\cL_u (- v f(\rho_u)) \in H_0^1(\Omega)\cap H^2(\Omega)$ be given 
as in Theorem~\ref{t:rhou} and Lemma~\ref{l:Lu}, respectively. 
If $v \in \cA_u,$  then $J''[u][v]$ exists and is given by 
\begin{equation}
\label{J''uv}
J''[u][v] = \int_\Omega \left[ 2vf'(\rho_u)  (1-\psi_u)\hat{\psi}_{u, v}
+u f''(\rho_u) (1-\psi_u)\hat{\psi}_{u, v}^2\right] dx.
\end{equation}    
If $v \in \cA_u^+$, then $J''_+[u][v]$ exists and is given by the right-hand side of \eqref{J''uv}.
If $v \in \cA_u$ then $v \in \cA_u^+$ and $J''_+[u][v] = J''[u][v].$
\end{theorem}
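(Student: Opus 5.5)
The plan is to differentiate the formula from Theorem~\ref{t:FirstVariations}, applied at $u_t := u+tv$. Write $\rho_t=\rho_{u_t}$ and $\psi_t=\psi_{u_t}=\cL_{u_t}(u_t f'(\rho_t))$. For $|t|\le t_v$,
\[
\frac{J'[u_t][v]-J'[u][v]}{t}=\int_\Omega v\,\frac{f(\rho_t)(1-\psi_t)-f(\rho_u)(1-\psi_u)}{t}\,dx .
\]
We split the difference quotient as
\[
\frac{f(\rho_t)-f(\rho_u)}{t}(1-\psi_t)-f(\rho_u)\frac{\psi_t-\psi_u}{t}.
\]
From Step~2 and \eqref{frhot1}--\eqref{frhot2} in the proof of Theorem~\ref{t:FirstVariations}, the first factor converges in $L^2(\Omega)$ to $f'(\rho_u)\hat\psi_{u,v}$. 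So it remains to show two things. First, $\psi_t\to\psi_u$ strongly in a space that pairs with $v\in L^2$; uniform bounds $0\le\psi_t\le 1$ from Lemma~\ref{l:Lu} together with $H^1$ convergence suffice. Second, $\dot\psi:=\lim_{t\to0}(\psi_t-\psi_u)/t$ exists weakly in $H^1(\Omega)$, hence strongly in $L^2(\Omega)$, and we must identify it.

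To find $\dot\psi$, subtract the weak equations for $\psi_t$ and $\psi_u$ and divide by $t$. The quotient $\eta_t=(\psi_t-\psi_u)/t$ then satisfies
\[
-\Delta\eta_t+\sigma\eta_t+u f'(\rho_u)\eta_t
= v f'(\rho_t)(1-\psi_t)+u\,\frac{f'(\rho_t)-f'(\rho_u)}{t}(1-\psi_t).
\]
The right-hand side is bounded in $L^{6/5}(\Omega)\subset H^{-1}$. This uses $|f'|\le1$, $|f''|\le 2$ on $[0,\infty)$, $u\in L^2$, and $\|(\rho_t-\rho_u)/t\|_{H^1}$ bounded by \eqref{rtr}. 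Hence $\eta_t$ is bounded in $H_0^1(\Omega)$. Passing to the limit along subsequences exactly as in Step~2 gives $\eta_t\rightharpoonup\dot\psi$, where
\[
\dot\psi=\cL_u\bigl(vf'(\rho_u)(1-\psi_u)+uf''(\rho_u)\hat\psi_{u,v}(1-\psi_u)\bigr).
\]
The limit is unique, so the whole family converges. In the linear case $f''=0$ and this step is simpler.

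Combining the two pieces gives
\[
J''[u][v]=\int_\Omega v f'(\rho_u)\hat\psi_{u,v}(1-\psi_u)\,dx-\int_\Omega vf(\rho_u)\,\dot\psi\,dx .
\]
By the self-adjointness of $\cL_u$ (Lemma~\ref{l:Lu}), we have $\int vf(\rho_u)\cL_u(w)=\int w\,\cL_u(vf(\rho_u))=-\int w\,\hat\psi_{u,v}$. Substituting the expression for $\dot\psi$, the second integral becomes
\[
\int_\Omega \bigl[vf'(\rho_u)(1-\psi_u)\hat\psi_{u,v}+uf''(\rho_u)(1-\psi_u)\hat\psi_{u,v}^2\bigr]\,dx,
\]
and adding the first integral yields \eqref{J''uv}. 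The one-sided versions follow verbatim, with $0<t\le t_v^+$ and $t\to0^+$. If $v\in\cA_u$, both one-sided and two-sided limits exist and coincide.

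The main obstacle is the nonlinear-case passage to the limit in $u\,(f'(\rho_t)-f'(\rho_u))/t\,(1-\psi_t)\phi$. The coefficient $u$ is only in $L^2$, and the $t$-dependent factor $\psi_t$ also moves. I would handle it as in Step~2 of Theorem~\ref{t:FirstVariations}. Write
\[
\frac{f'(\rho_t)-f'(\rho_u)}{t}=-\hat\psi_t\,\frac{\rho_t+\rho_u+2}{(\rho_t+1)^2(\rho_u+1)^2}.
\]
Then test with $\phi\in H_0^1\cap L^\infty$, and use $\hat\psi_t\to\hat\psi_{u,v}$ in $L^4$ (compact embedding), $\rho_t\to\rho_u$ in $L^4$, and the uniform bounds $0\le\rho_t\le g_{\rm max}$, $0\le\psi_t\le1$. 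Density of $H_0^1\cap L^\infty$ in $H_0^1$ finishes the argument.
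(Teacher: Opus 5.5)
Your proposal is correct and follows the paper's proof essentially step for step. The steps are: an $H^1$ bound on $(\psi_t-\psi_u)/t$ from the differentiated adjoint equation; identification of its weak limit as $\cL_u\bigl(vf'(\rho_u)(1-\psi_u)+uf''(\rho_u)(1-\psi_u)\hat\psi_{u,v}\bigr)$ by passing to the limit with test functions in $H_0^1(\Omega)\cap L^\infty(\Omega)$; and self-adjointness of $\cL_u$ to reach \eqref{J''uv}. The differences are only cosmetic: you bound the right-hand side in $L^{6/5}(\Omega)\subset H^{-1}(\Omega)$ instead of testing with $\eta_t$ and using H\"older, and you write $f'(\rho_t)-f'(\rho_u)$ explicitly rather than through the paper's $\alpha(\rho_t,\rho)$.
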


\begin{proof}
   We only prove \eqref{J''uv} for the case $v \in \cA_u$ as the case that 
    $v \in \cA_u^+$ is similar (replacing $|t| \le t_v$ and $t\to 0$ by $0 \le t \le t_v^+$ and 
    $t\to 0^+$, respectively).
We prove the theorem in three steps.

{\it Step 1.}
Since $v \in \cA_u$, there exists $t_v > 0$ such that $u + t v \in \cE$ if $|t| \le t_v.$
Denote 
\[
\rho = \rho_u, \quad \rho_t = \rho_{u+tv}, \quad \psi = \psi_u, \quad 
\psi_t=\psi_{u+tv}, 
\qquad \mbox{with } \  |t| \le t_v.
\]
We show that 
\begin{equation}
\label{psit2psi}
\sup_{0 < |t| \le t_v} 
\left\| \frac{\psi_t - \psi }{t} \right\|_{H^1(\Omega)} < \infty
\qquad \mbox{and} \qquad 
\lim_{t\to 0} \| \psi_t - \psi \|_{H^1(\Omega)} = 0. 
\end{equation}

 Subtracting the weak formulations for $\psi_t$ and $\psi_u$ (cf.\ Lemma~\ref{l:Lu})
 and setting  the test function to be $\phi = \eta_t := \psi_t - \psi \in H_0^1(\Omega)\cap L^\infty(\Omega)$ in 
 the resulting equation, we have for $|t| \le t_v$ that 
\begin{align}
\label{etawf}
&\int_{\Omega} \left( |\nabla \eta_t |^2+\sigma  \eta_t^2 \right) dx 
\nonumber \\
& \qquad = \int_\Omega \left[ (u+tv) f'(\rho_t) - u f'(\rho) \right] \eta_t \, dx
-\int_\Omega \left[  (u+tv) f'(\rho_t)\psi_t- u f'(\rho)\psi \right] \eta_t  dx
\nonumber \\
& \qquad = \int_\Omega \left[ u  \left( f'(\rho_t) - f'(\rho) \right) \eta_t + 
t v f'(\rho_t) \eta_t \right] dx 
\nonumber \\
& \qquad \qquad 
- \int_\Omega \left[ u \left( f'(\rho_t)-f'(\rho)\right)\psi_t\eta_t 
+ uf'(\rho)\eta_t^2 + t v f'(\rho_t) \psi_t \eta_t \right] dx
\nonumber \\
&\qquad 
\le \int_\Omega \left[  u   \left( f'(\rho_t) - f'(\rho) \right)   \eta_t + 
t  v f'(\rho_t) \eta_t  \right] \, dx 
\nonumber \\
& \qquad \qquad 
- \int_\Omega \left[ u \left( f'(\rho_t)-f'(\rho)\right) \psi_t\eta_t 
 + t v f'(\rho_t) \psi_t \eta_t \right]  dx
\end{align}
For the linear case, $f(s) = s$ and $f'(s) = 1$ for any $s$. Hence, $f'(\rho_t) - f'(\rho) = 0$ in $\Omega.$ 
For the nonlinear case, $f(s) = s/(s+1) $ $(s \ge 0)$. Thus, $0< f'(s) = 1/(s+1)^2$ $(s \ge 0)$. Hence, 
\[
\left| f'(s) - f'(\hat{s}) \right| = \frac{| 2+ s  + \hat{s}  | \, | s - \hat{s} |}{(1 + s)^2(1+\hat{s})^2}
\le \frac{ (|1 + s| + |1 + \hat{s}| ) | s - \hat{s}|}{ (1+s)^2 (1+\hat{s})^2}
\le 2 |s - \hat{s}| \qquad \forall s, \hat{s} \ge 0. 
\]
Moreover, by Lemma~\ref{l:Lu}, we have $0 \le \psi_t \le 1$ in $\Omega$,
and hence $|\eta_t| \le 2$ in $\Omega.$ Consequently, it follows from 
 \eqref{etawf}, the embedding $H^1(\Omega)\hookrightarrow L^6(\Omega),$  H{\"o}lder's inequality, 
and \eqref{rtr} that 
\begin{align*}
\min\{ 1, \sigma\} \| \eta_t \|^2_{H^1(\Omega)}
& \le \int_\Omega \left( |\nabla \eta_t|^2 + \sigma \eta_t^2 \right)\, dx 
\\
&\le 4 \int_\Omega u | \rho_t - \rho | \, |\eta_t | \, dx + 2 |t| \int_\Omega  |v | \, |\eta_t|\, dx\\
&\le 4\|u\|_{L^2(\Omega)}\|\rho_t-\rho\|_{L^6(\Omega)}\|\eta_t\|_{L^3(\Omega)}
+2|t|\|v\|_{L^2(\Omega)}\|\eta_t\|_{L^2(\Omega)} \\
&\le C_1(u, v) \left( \| \rho_t - \rho \|_{H^1(\Omega)} \| \eta_t \|_{H^1(\Omega)}
+ | t| \|\eta_t\|_{H^1(\Omega)} \right) \\
& \le C_2(u, v) |t| \| \eta_t \|_{H^1(\Omega)} \qquad \mbox{if } |t|\le t_v, 
\end{align*}
where $C_i(u, v)> 0$ $(i=1, 2)$ are constants independent of $t.$ 
This leads to the inequality and hence the limit in \eqref{psit2psi}.

{\it Step 2.} Let \[ \xi_t = \frac{1}{t} \left( \psi_t - \psi \right) \in H_0^1(\Omega)\cap L^\infty(\Omega) \qquad \mbox{for } 0 < |t| \le t_v. \] 
We  show that 
\begin{equation} 
\label{xit2xiuv} 
\xi_t \to \xi_{u, v} \quad \mbox{ weakly in } H^1(\Omega) \quad \mbox{as } t \to 0, 
\end{equation} 
where 
\begin{equation}
\label{xiuv}
\xi_{u, v} =\cL_u \left(vf'(\rho)(1-\psi_u)+uf''(\rho)(1-\psi_u)\hat{\psi}_{u, v} \right)\in H_0^1(\Omega)\cap H^2(\Omega).
\end{equation}
It suffices to show that for any sequence $\{ t_n \}$ with $0 < |t_n| \le t_v$ and $t_n \to 0$
there exists a subsequence, not relabeled, such that $\xi_{t_n} \to \xi_{u,v}$ weakly
in $H_0^1(\Omega). $
Let $0 < |t_n | \le t_v$ $(n = 1, 2, \dots)$ be such that $t_n \to 0$. 
Since $ \sup_{0 < |t| \le t_v} \| \xi_t \|_{H^1(\Omega)} < \infty$ by \eqref{psit2psi}, 
there exists a  subsequence of $\{ t_n \}$, not relabeled, and $\hat{\xi} \in H_0^1(\Omega)$ such that 
\begin{equation}  
\label{xitn2xi}
\xi_{t_{n}} \to \hat{\xi} \quad \mbox{weakly in } H^1(\Omega), 
\quad \mbox{strongly in } L^2(\Omega), \quad \mbox{and} \quad 
\mbox{a.e.\ in }\,  \Omega.
\end{equation} 
Moreover, passing to a further subsequence if necessary, we have by \eqref{psithatpu}, \eqref{psitn2psi},  and 
the compact embedding $H^1(\Omega) \hookrightarrow L^3(\Omega)$ with $\rho = \rho_u$ that
\begin{equation}
    \label{psitn2psiuv}
    \frac{\rho_{t_n} - \rho}{t_n} \to \hat{\psi}_{u, v} \quad \mbox{weakly in } H^1(\Omega), 
\quad \mbox{strongly in } L^3(\Omega), \quad \mbox{and} \quad 
\mbox{a.e.\ in } \,   \Omega.
\end{equation}
We need to show that $\hat{\xi}=\xi_{u, v}$ in $H_0^1(\Omega).$

It follows from the weak forms for $\psi_t$ and $\psi$, respectively, that for any $n \ge 1$
\begin{align}
\label{xitn}
&\int_{\Omega} \left[\nabla \xi_{t_n} \cdot \nabla \phi +\sigma  \xi_{t_n} \phi 
+u\left(\frac{f'(\rho_{t_n})-f'(\rho)}{t_n}\right)\psi_{t_n} \phi
+  uf'(\rho) \xi_{t_n} \phi\,
+v f'(\rho_{t_n})\psi_{t_n}\phi \right]\,dx \nonumber \\
&\qquad = \int_{\Omega}\left[\frac{u(f'(\rho_{t_n})-f'(\rho))}{t_n}\phi 
+vf'(\rho_{t_n})\phi \right]\, dx \qquad \forall \phi \in H_0^1(\Omega).  
\end{align}
Fix $\phi \in H_0^1(\Omega)\cap L^\infty(\Omega)$. By \eqref{xitn2xi}
and the fact that $0 \le f'(\rho) \le 1$ in $\Omega$, we have
\begin{align}
\label{xin1}
&\int_\Omega \left[ \nabla \xi_{t_n} \cdot \nabla \phi  + \sigma \xi_{t_n} \phi 
+uf'(\rho) \xi_{t_n} \phi \right] dx 
\to \int_\Omega \left[ \nabla \hat{\xi} \cdot \nabla \phi  + \sigma \hat{\xi}  \phi 
+ uf'(\rho) \hat{\xi} \phi \right] dx. 
\end{align}
The third part of the left integral in \eqref{xitn} vanishes for the linear case where $f'(s) = 1$ for all $s$. 
Consider the nonlinear case.  For any $t$ with $|t|\le t_v$, set
\[
\alpha (\rho_t, \rho) = \frac{1}{ (1+\rho_t)(1+\rho)}\left( \frac{1}{1  + \rho_t} + \frac{1}{1+\rho} \right).
\]
We verify that $0 < \alpha(\rho_t, \rho)\le 2$ in $\Omega$, and 
\begin{align}
\label{fpfp}
&f'(\rho_t) - f'(\rho) = -(\rho_t - \rho) \alpha (\rho_t, \rho), 
\\
&f''(\rho) = - \frac{2}{(\rho+1)^3 }= - \alpha (\rho, \rho).
\end{align}
Therefore, denoting $\hat{\psi} = \hat{\psi}_{u, v}, $ we have 
\begin{align*}
&\int_{\Omega} u\left(\frac{f'(\rho_{t_n})-f'(\rho)}{t_n}\right)\psi_{t_n} \phi \,dx \\
& \qquad = -\int_\Omega \left( \frac{\rho_{t_n} - \rho}{t_n} \right) \alpha (\rho_{t_n}, \rho)
\left( \psi_{t_n} - \psi \right) u \phi \,dx - 
\int_\Omega \left( \frac{\rho_{t_n} - \rho}{t_n} - \hat{\psi} \right)
\alpha (\rho_{t_n}, \rho) \psi u \phi \, dx\\
& \qquad \qquad - \int_\Omega \hat{\psi} \left( \alpha(\rho_{t_n}, \rho) - \alpha (\rho, \rho) \right)
\psi  u \phi \, dx + \int_\Omega  \hat{\psi} f''(\rho) \psi u \phi\, dx\\
&\qquad:= -a_n - b_n - c_n + d. 
\end{align*}
Note that $u \phi \in L^2(\Omega)$ and $0\le \alpha (\rho_{t_n}, \rho) \le 2$ in $\Omega$ 
for all $n$. By \eqref{psitn2psiuv}, $(\rho_{t_n} - \rho)/t_n \to \hat{\psi}$ 
strongly in $L^3(\Omega).$ By \eqref{psit2psi} and the embedding
$H^1(\Omega) \hookrightarrow L^6(\Omega)$, $\psi_{t_n} \to \psi$ strongly in $L^6(\Omega).$
An application of H{\"o}lder's inequality then leads to 
\[
a_n = \int_\Omega \left( \frac{\rho_{t_n} - \rho}{t_n} \right) \alpha (\rho_{t_n}, \rho)
\left( \psi_{t_n} - \psi \right) u \phi \, dx \to 0 \qquad \mbox{as } n \to \infty.
\]
Similarly, since $u\psi \phi \in L^2(\Omega)$,  we have 
\[
b_n = \int_\Omega \left( \frac{\rho_{t_n} - \rho}{t_n} - \hat{\psi} \right)
\alpha (\rho_{t_n}, \rho) \psi u \phi \, dx \to 0 \qquad \mbox{as } n \to \infty.
\]
Since $\hat{\psi} \psi u \phi \in L^2(\Omega)$, it follows from \eqref{rhotn} and an application
of the Lebesgue Dominated Convergence Theorem that 
\[
c_n = \int_\Omega \hat{\psi} \left( \alpha(\rho_{t_n}, \rho) - \alpha (\rho, \rho) \right)
\psi  u \phi \, dx \to 0 \qquad \mbox{as } n \to \infty.
\]
Consequently, we have the convergence of the third part of the integral on the left of \eqref{xitn}
\begin{equation}
\label{xin2}
    \int_{\Omega} u\left(\frac{f'(\rho_{t_n})-f'(\rho)}{t_n}\right)\psi_{t_n} \phi \,dx 
    \to d=\int_\Omega  \hat{\psi} f''(\rho) \psi u \phi\, dx \qquad \mbox{as } n \to \infty.
\end{equation}
This and similar arguments also lead to the convergence of the last part of the integral on the left-hand side of \eqref{xitn}
\begin{align}
\label{xin3}
\int_\Omega v f'(\rho_{t_n} ) \psi_{t_n} \phi \, dx &= 
t_n \int_\Omega v \left( \frac{f'(\rho_{t_n}) - f'(\rho)}{t_n} \right) \psi_{t_n} \phi \, dx
\nonumber \\
& \qquad 
+ \int_\Omega v f'(\rho) (\psi_{t_n} - \psi) \phi \, dx 
 + \int_\Omega v f'(\rho) \psi \phi \, dx
 \nonumber \\
& \to \int_\Omega v f'(\rho) \psi \phi \, dx \qquad \mbox{as } n \to \infty.
\end{align}
Applying similar arguments, we obtain the convergence of the right-hand side of \eqref{xitn}
\begin{align}
\label{xin4}
\int_{\Omega} \left[ \frac{u(f'(\rho_{t_n})-f'(\rho))}{t_n}\phi 
+vf'(\rho_{t_n})\phi \right]\, dx \to
\int_{\Omega} \left[ uf''(\rho)\hat{\psi}\phi + vf'(\rho)\phi
\right] \, dx.
\end{align}
It now follows from \eqref{xitn}, \eqref{xin1}, \eqref{xin2},  \eqref{xin3}, 
and \eqref{xin4}, and rearranging
terms that
\begin{align*}
&\int_{\Omega} \left[\nabla \hat{\xi} \cdot \nabla \phi +\sigma  \hat{\xi} \phi + 
uf'(\rho)\hat{\xi}\phi \right]\,dx 
\nonumber \\
& \qquad  =\int_{\Omega} \left[ vf'(\rho_u) (1 - \psi) + u f''(\rho) \hat{\psi} ( 1 - \psi) \right]\phi\, dx
\qquad \forall \phi \in H_0^1(\Omega) \cap L^\infty(\Omega). 
\end{align*}
Since  all $\psi, \hat{\psi}, f'(\rho_u), f''(\rho) \in L^\infty(\Omega)$ and 
$\hat{\xi} \in H^1(\Omega) \hookrightarrow L^4(\Omega)$, we can replace $\phi \in H_0^1(\Omega)\cap L^\infty(\Omega)$
by $\phi\in H_0^1(\Omega) \hookrightarrow L^4(\Omega).$
Hence, with our notation $\rho = \rho_u$, $\psi = \psi_u $, and $\hat{\psi} = \hat{\psi}_{u, v}$, and 
with the definition of $\cL_u$ (cf.\ Lemma~\ref{l:Lu}) and 
$\xi_{u, v}$ (cf.\ \eqref{xiuv}), we have $\hat{\xi} = \xi_{u, v}$ as desired. 

{\it Step 3.} 
It follows from \eqref{frhot1}, \eqref{frhot2}, \eqref{psit2psi}, and \eqref{xit2xiuv} that
\begin{align*}
    J''[u][v]&=\lim_{t \to 0} \frac{J'[u+tv][v]-J'[u][v]}{t}\\
    &=\lim_{t\to 0} \frac{1}{t}\int_{\Omega} [f(\rho_t)(1-\psi_t)-f(\rho)(1-\psi)]v \,dx\\
    &=\lim_{t\to 0}\int_{\Omega} \frac{f(\rho_t)-f(\rho)}{t}(1-\psi_t)v \, dx 
    -\lim_{t \to 0} \int_{\Omega} \frac{\psi_t-\psi}{t}f(\rho)v \, dx\\
    &=\int_{\Omega} f'(\rho) \hat{\psi}(1-\psi)v \, dx -\int_{\Omega} \xi_{u,v} vf(\rho) \, dx  \\
    &= \int_{\Omega} f'(\rho) \hat{\psi}(1-\psi)v \, dx 
    -\int_{\Omega} \cL_u \left(vf'(\rho)(1-\psi)+uf''(\rho)(1-\psi)\hat{\psi} \right) vf(\rho) \, dx  \\
    &= \int_{\Omega} f'(\rho) \hat{\psi} (1-\psi)v \, dx 
    +\int_{\Omega} \left(vf'(\rho)(1-\psi)+uf''(\rho)(1-\psi)\hat{\psi} \right) \cL_u (-vf(\rho)) \, dx  \\
    &= \int_{\Omega} f'(\rho) \hat{\psi} (1-\psi)v \, dx +\int_{\Omega} \left(vf'(\rho)(1-\psi)
    +uf''(\rho)(1-\psi)\hat{\psi} \right) \hat{\psi}\, dx \\
    & =\int_\Omega \left[ 2vf'(\rho)  (1-\psi)\hat{\psi}
+u f''(\rho) (1-\psi)\hat{\psi}^2\right] dx.
\end{align*}
This is \eqref{J''uv} with our notation $\rho = \rho_u$, $\psi = \psi_u$, and $\hat{\psi} = \hat{\psi}_{u, v}.$
The proof is complete.
\end{proof}

\begin{theorem}
\label{t:alphav}
Let $u \in \cE$. 
\begin{compactenum}
    \item[{\rm (1)}]
Let $v \in \cA_u$ be such that $u + t v \in \cE$ if $|t| \le t_v$ for 
some $t_v > 0.$ Define $\alpha_v(t) = J[u+tv]$ $(|t| < t_v).$ Then, $\alpha_v \in 
C^2((-t_v, t_v))$. Moreover, 
$\alpha_v'(t) = J'[u+tv][v] $ and $\alpha_v''(t) = J''[u+tv][v]$  for any $t \in (-t_v, t_v)$, and
\begin{equation}
\label{Jutv}
J[u+tv] = J[u] + J'[u][v]t + \frac12 J''[u][v] t^2 + o(t^2) \qquad \mbox{as } t \to 0.
\end{equation}
\item[{\rm (2)}] Let $v \in \cA_u^+$ be such that $u + t v \in \cE$ if $0 \le t \le t_v^+$ for
some $t_v^+ > 0$. Define $\alpha_v^+(t)= J[u+tv]$ for $t \in [0, t_v^+)$. Then, 
$\alpha_v^+\in C^2([0, t_v^+))$. Moreover, 
${\alpha_v^+}'(t) = J'_+[u+tv][v]$ and ${\alpha_v^+}''(t) = J''_+[u+tv][v]$ for any 
$t \in [0, t_v^+),$ and 
\begin{equation}
\label{Jutv+}
J[u+tv] = J[u] + J'_+[u][v]t + \frac12 J''_+[u][v] t^2 + o(t^2) \qquad \mbox{as } t \to 0^+.
\end{equation}
If $v \in \cA_u$, then $v \in \cA_u^+$ and $\alpha_v^+ (t) = \alpha_v(t)$ if
$t \in [0, t_v)$, where $t_v$ is defined in Part (1).
\end{compactenum}
\end{theorem}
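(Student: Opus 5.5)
The plan is to reduce everything to Theorems~\ref{t:FirstVariations} and \ref{t:SecondVariations} applied at the shifted base point $u+tv$, and then to prove continuity of the resulting expressions in $t$. Fix $t\in(-t_v,t_v)$ and set $w=u+tv\in\cE$. For $|s|\le t_v-|t|$ we have $w+sv=u+(t+s)v\in\cE$, so $v\in\cA_w$. Theorem~\ref{t:FirstVariations} then gives that $\alpha_v$ is differentiable at $t$ with
\[
\alpha_v'(t)=\lim_{s\to0}\frac{J[w+sv]-J[w]}{s}=J'[u+tv][v]=\int_\Omega f(\rho_{w})(1-\psi_{w})v\,dx .
\]
Next, $\alpha_v'(t+s)=J'[w+sv][v]$, so the difference quotient of $\alpha_v'$ at $t$ is exactly the quotient defining $J''[w][v]$. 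Theorem~\ref{t:SecondVariations} at $w$ therefore yields that $\alpha_v''(t)$ exists and equals $J''[u+tv][v]$, given by \eqref{J''uv} with $u$ replaced by $u+tv$. For part (2) the same argument works with one-sided limits; at interior points $t\in(0,t_v^+)$ one uses $v\in\cA_{u+tv}$ (two-sided), and at $t=0$ one uses $\cA_u^+$.

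The main step is \emph{continuity of $t\mapsto\alpha_v''(t)$}. Write $u_t=u+tv$, $\rho_t=\rho_{u_t}$, $\psi_t=\psi_{u_t}$ and $\hat\psi_t=\hat\psi_{u_t,v}=\cL_{u_t}(-vf(\rho_t))$. The first-derivative expression also needs continuity, but that is easier. The argument in Step~1 of the proof of Theorem~\ref{t:FirstVariations}, applied with base point $u_{t_0}$ and increment $t-t_0$, gives $\rho_t\to\rho_{t_0}$ in $H^1$ with the Lipschitz bound \eqref{rtr}, uniformly in $t_0$. Likewise Step~1 of Theorem~\ref{t:SecondVariations} gives $\psi_t\to\psi_{t_0}$ in $H^1$. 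The new ingredient is $\hat\psi_t\to\hat\psi_{t_0}$ in $H^1$. Subtract the weak forms \eqref{psiw} for $\hat\psi_t$ and $\hat\psi_{t_0}$ and test with $\hat\psi_t-\hat\psi_{t_0}$; the right-hand side collects the terms
\[
(t-t_0)vf'(\rho_t)\hat\psi_t,\qquad u_{t_0}\bigl(f'(\rho_t)-f'(\rho_{t_0})\bigr)\hat\psi_t,\qquad v\bigl(f(\rho_t)-f(\rho_{t_0})\bigr).
\]
I expect this estimate to be the main obstacle. It requires a uniform $L^\infty$ (or at least $L^6$) bound on $\hat\psi_t$. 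I would obtain this from a comparison principle: since $|vf(\rho_t)|\le f(g_{\rm max})|v|$, we have $|\hat\psi_t|\le f(g_{\rm max})\,\cL_0|v|$, where $\cL_0=(-\Delta+\sigma)^{-1}$ and $\cL_0|v|\in H^2\hookrightarrow L^\infty$ is independent of $t$. Combined with $|f'(s)-f'(\hat s)|\le2|s-\hat s|$, H\"older's inequality and $H^1\hookrightarrow L^6$, this gives $\|\hat\psi_t-\hat\psi_{t_0}\|_{H^1}\to0$.

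With these convergences, and with $\psi_t,\hat\psi_t$ uniformly bounded in $L^\infty$, $f',f''$ bounded and Lipschitz on $[0,g_{\rm max}]$, and $u_t\to u_{t_0}$ in $L^2$, dominated convergence and H\"older's inequality show that the integrands of \eqref{Jpuv} and \eqref{J''uv} converge in $L^1$. Hence $\alpha_v',\alpha_v''$ are continuous and $\alpha_v\in C^2$. Finally, \eqref{Jutv} and \eqref{Jutv+} are Taylor's theorem for a $C^2$ function of one variable, one-sided at $0$ in part (2), using $\alpha_v(0)=J[u]$, $\alpha_v'(0)=J'[u][v]$ and $\alpha_v''(0)=J''[u][v]$. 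The last assertion is immediate because both $\alpha_v$ and $\alpha_v^+$ are defined as $J[u+tv]$.
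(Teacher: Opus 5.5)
Your proposal is correct and follows essentially the same route as the paper. You identify $\alpha_v'$ and $\alpha_v''$ with $J'[u+tv][v]$ and $J''[u+tv][v]$ via Theorems~\ref{t:FirstVariations} and \ref{t:SecondVariations} at the shifted base point, prove $H^1$-continuity in $t$ of $\rho_t$, $\psi_t$, $\hat\psi_t$, deduce continuity of $\alpha_v''$ by H\"older and Sobolev estimates, and finish with Taylor's theorem. The one small difference is your uniform $L^\infty$ bound on $\hat\psi_t$ by comparison with $f(g_{\rm max})(-\Delta+\sigma)^{-1}|v|$; it is valid but unnecessary. Testing the equation for $\hat\psi_t$ with $\hat\psi_t$ already gives a uniform $H^1$ bound, hence an $L^6$ bound, and that is all the paper's $L^4$/$L^6$ estimates use.
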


\begin{proof}
We only prove Part (1), as the proof of Part (2) is similar.
Fix any $t_0 \in (-t_v, t_v)$. Then $u+t_0v \in \cE$ and $v \in \cA_{u+t_0v}$ as $u+t_0v + t v \in \cE$ 
if $|t|< t_v - |t_0|. $ 
Thus, by Theorem~\ref{t:FirstVariations}, $J'[u+t_0 v][v]$ exists, and 
\begin{align*}
\alpha_v'(t_0)  &= \lim_{\tau \to 0} \frac{\alpha_v(t_0+\tau) - \alpha_v(t_0)}{\tau} 
\\
& =\lim_{\tau \to 0} \frac{J[u + (t_0+\tau)v] - J[u+t_0v ]}{\tau}
\\
&=J'[u+t_0v][v].
\end{align*}
Similarly, $J''[u+t_0v][v]$ exists and $\alpha_v''(t_0) = J''[u+t_0v][v].$ 

Since $\alpha_v''$ exists on $(-t_v, t_v)$, $\alpha_v'$ is differentiable and hence continuous, therefore $\alpha_v \in C^1((-t_v, t_v)).$ For $|t-t_0|<\delta$ where $\delta:=t_v-|t_0|>0$, denote
\begin{equation}
\label{urpp}
u_t  = u+tv, \quad \rho_t = \rho_{u_t} = \cT_g(u_t), \quad \psi_t = \cL_{u_t} (u_t f'(\rho_t)), 
\quad \hat{\psi}_t = \hat{\psi}_{u_t, v} = \cL_{u_t} (-v f(\rho_t)). 
\end{equation}
By Theorem~\ref{t:rhou} and Lemma~\ref{l:Lu}, we have 
$\rho_t \in H_g^1(\Omega) \cap H^2(\Omega)$ and $\psi_t, \hat{\psi}_t \in H_0^1(\Omega)\cap H^2(\Omega).$ Moreover, it follows from \eqref{rhot2rho} and \eqref{psit2psi}, with $u+t_0 v$ replacing
$u$, that $\rho_t \to \rho_{t_0}$ 
and $ \psi_t \to \psi_{t_0}$ in $H^1(\Omega)$ as $t \to t_0$. Similarly, 
with the same argument in proving $ \psi_t \to \psi_{t_0}$ in $H^1(\Omega)$, we obtain
$\hat{\psi}_t \to \hat{\psi}_{t_0}$ in $ H^1(\Omega)$ as $t \to t_0.$ 
By Theorem~\ref{t:SecondVariations}, 
\begin{equation}
\label{app}
\alpha_v''(t) = J''[u_t][v]=\int_{\Omega} 
\left( 2 v f'(\rho_t)(1-\psi_t) \hat{\psi}_t
+u_t f''(\rho_t)(1-\psi_t)\hat{\psi}_t^2 \right) \, dx =: A_t + B_t. 
\end{equation}
Note that $f'$ is Lipschitz on $[0,g_{\rm max}]$. Thus, by the embedding $H^1(\Omega) \hookrightarrow L^4(\Omega)$, we have 
$f'(\rho_t)\to f'(\rho_{t_0})$ in $L^4(\Omega).$ Consequently, by 
H{\"o}lder's inequality and the fact $|f'(\rho_t)|\le 1$ and $0 \le \psi_t \le 1$ in $\Omega$, we obtain that 
\begin{align}
\label{AtAt}
| A_t - A_{t_0} | & = \left| \int_\Omega \left[ 2 v f'(\rho_t)(1-\psi_t) \hat{\psi}_t
- 2 v f'(\rho_{t_0} )(1-\psi_{t_0} ) \hat{\psi}_{t_0} \right] dx  \right|
\nonumber \\
& \le \int_\Omega \left| 2 v \left[ f'(\rho_t) - f'(\rho_{t_0} ) \right] (1 - \psi_{t})
\hat{\psi}_t \right| \, dx + \int_\Omega \left| 2 v f'(\rho_{t_0}) 
( \psi_{t_0} - \psi_{t} ) \hat{\psi}_t \right|\, dx \nonumber
\\
&\qquad + \int_\Omega \left|  2 v f'(\rho_{t_0})(1 - \psi_{t_0} ) ( \hat{\psi}_t - \hat{\psi}_{t_0} )\right|\, dx 
\nonumber \\
&\le 
2 \|v\|_{L^2(\Omega)} \left(  \|f'(\rho_t)-f'(\rho_{t_0})\|_{L^4(\Omega)} \|\hat{\psi}_t\|_{L^4(\Omega)} 
+  \|\psi_t - \psi_{t_0}\|_{L^4(\Omega)} \| \hat{\psi}_{t} \|_{L^4(\Omega)}
\right. 
\nonumber \\
&\qquad 
\left. 
+ \| \hat{\psi}_t - \hat{\psi}_{t_0}\|_{L^2(\Omega)} \right)
\nonumber \\
& \to 0 \qquad \mbox{as } t \to t_0. 
\end{align}
The second term $B_t$ vanishes for the linear case where $f''(s) = 0$ for any $s \in \R.$ For the nonlinear case, we have
by the fact that $| f''(s) | \le 2$ and $|f'''(s)|\le 6$ for any $s \ge 0$, 
$|f''(s) - f''(s')| \le 6 | s - s'|$ for any $s, s'\ge 0$, and $0 \le \psi_t, \psi_{t_0} \le 1$ in $\Omega$, 
\begin{align*}
    &\left| u_t f''(\rho_t)(1-\psi_t)\hat{\psi}_t^2 
    - u_{t_0}  f''(\rho_{t_0} )(1-\psi_{t_0} )\hat{\psi}_{t_0} ^2 \right|\\
    &\qquad \le \left| (u_t-u_{t_0}) f''(\rho_t) (1 - \psi_t )\hat{\psi}_t^2 \right|
    + \left| u_{t_0} \left[ f''(\rho_t)-f''(\rho_{t_0}) \right] (1-\psi_t) \hat{\psi}_t^2 \right| \\
    & \qquad \qquad 
    + \left| u_{t_0} f''(\rho_{t_0})  (\psi_{t_0}-\psi_t) \hat{\psi}_t^2 \right|
    + \left| u_{t_0} f''(\rho_{t_0})  (1- \psi_{t_0}) (\hat{\psi}_t - \hat{\psi}_{t_0})
    (\hat{\psi}_t+\hat{\psi}_{t_0}) \right|
    \\
    & \qquad \le 2|t-t_0| |v| \hat{\psi}_t^2 + 6 u_{t_0} \left| \rho_t-\rho_{t_0} \right| \hat{\psi}_t^2 
    + 2u_{t_0} | \psi_t - \psi_{t_0} |\hat{\psi}_t^2 + 2u_{t_0} | \hat{\psi}_t - \hat{\psi}_{t_0} |
    \left( |\hat{\psi}_t| + |\hat{\psi}_{t_0} | \right). 
\end{align*}
Note that $ \| \hat{\psi}_t^2 \|_{L^2(\Omega)} = \| \hat{\psi}_t \|^2_{L^4(\Omega)}$
and $\| \hat{\psi}_t^2 \|_{L^3(\Omega)} = \| \hat{\psi}_t \|^2_{L^6(\Omega)}$. 
Consequently, we have by the Sobolev embedding $H^1(\Omega)\hookrightarrow L^6(\Omega)$, H{\"o}lder's inequality, 
and the convergence $\rho_t \to \rho_{t_0}$ (cf.\ \eqref{rhot2rho}), $ \psi_t \to \psi_{t_0}$ 
(cf.\ \eqref{psit2psi}),  and $\hat{\psi}_t\to 
\hat{\psi}_{t_0}$ (cf.\ \eqref{psit2psi} and \eqref{urpp}) in $H^1(\Omega)$ as $ t \to t_0$ that 
\begin{align*}
    | B_t - B_{t_0} | &  = 
    \left| \int_\Omega \left[ u_t f''(\rho_t)(1-\psi_t)\hat{\psi}_t^2 
    - u_{t_0} f''(\rho_{t_0} )(1-\psi_{t_0} )\hat{\psi}_{t_0}^2 \right] dx\right|
    \\
    & \le 2|t-t_0| \| v\|_{L^2(\Omega)} \| \hat{\psi}_t\|^2_{L^4(\Omega)}
    + 6  \| u_{t_0} \|_{L^2(\Omega)} \| \rho_t - \rho_{t_0} \|_{L^6(\Omega)}
    \| \hat{\psi}_t \|_{L^6(\Omega)}^2 \\
    &\qquad +2 \| u_{t_0} \|_{L^2(\Omega)} \| \psi_t - \psi_{t_0} \|_{L^6(\Omega)}
    \| \hat{\psi}_t\|^2_{L^6(\Omega)} \\
    & \qquad + 2\| u_{t_0} \|_{L^2(\Omega)}
    \| \hat{\psi}_t - \hat{\psi}_{t_0} \|_{L^4(\Omega)} \left( \| \hat{\psi}_{t} \|_{L^4(\Omega)} 
    + \| \hat{\psi}_{t_0} \|_{L^4(\Omega)} \right) \\
    & \to 0 \qquad \mbox{as } t \to t_0. 
\end{align*}
This, together with \eqref{app} and \eqref{AtAt}, implies that
$\alpha_v''(t) \to \alpha_v''(t_0)$ as $t \to t_0$. 
Thus,  $\alpha''_v$ is continuous on $(-t_v, t_v).$
Finally, we obtain \eqref{Jutv} by Taylor's expansion.
\end{proof}

\section{Critical Points and Local Maximizers}
\label{s:CriticalPoints}

\begin{definition}
    \label{d:CriticalPoint} We say that 
    $u \in \cE$ is a critical point of $J: \cE\to [0, \infty)$ if 
    $J'[u][v] = 0$ for all $v \in \cA_u.$

    We say that $u \in \cE$ is a local maximizer of $J: \cE \to [0, \infty)$, 
    if there exists a $\delta> 0$ such that 
    $J[u] \ge J[w] $ for any $w \in \cE $ with $ \|u-w\|_{L^2(\Omega)} < \delta.$
\end{definition}

\begin{lemma}
\label{l:Jpu}
If $u \in \cE$ is a local maximizer of $J: \cE \to \R$, then it is a critical point of $J: \cE\to \R$. Moreover, 
$J'_+[u][v ] \le 0$ for any $v \in \cA_u^+.$
\end{lemma}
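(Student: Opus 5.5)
The plan is to reduce everything to the one-sided inequality and then obtain the two-sided statement by symmetry. Let $u\in\cE$ be a local maximizer with radius $\delta>0$ as in Definition~\ref{d:CriticalPoint}.

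First I would take $v\in\cA_u^+$, with $u+tv\in\cE$ for $0\le t\le t_v^+$. Note that $\|(u+tv)-u\|_{L^2(\Omega)}=t\|v\|_{L^2(\Omega)}<\delta$ whenever $0<t<\delta/(\|v\|_{L^2(\Omega)}+1)$. Shrinking $t$ further so that $t\le t_v^+$, local maximality gives $J[u+tv]-J[u]\le 0$, hence
\[
\frac{J[u+tv]-J[u]}{t}\le 0 \qquad \mbox{for all sufficiently small } t>0.
\]
By Theorem~\ref{t:FirstVariations}, the one-sided limit $J'_+[u][v]$ exists, and by the display it is $\le 0$. This proves the second assertion.

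Next I would take $v\in\cA_u$. Then $u+tv\in\cE$ for $|t|\le t_v$, so both $v$ and $-v$ belong to $\cA_u^+$, since $u+t(-v)=u-tv\in\cE$ for $0\le t\le t_v$. By Theorem~\ref{t:FirstVariations}, $J'[u][v]$ exists and equals $J'_+[u][v]$. Likewise, either from formula \eqref{Jpuv}, which is linear in $v$, or directly from the definition via the substitution $t\mapsto -t$, we get $J'_+[u][-v]=-J'[u][v]$. Applying the first step to both $v$ and $-v$ gives $J'[u][v]\le 0$ and $-J'[u][v]\le 0$, so $J'[u][v]=0$. Since $v\in\cA_u$ was arbitrary, $u$ is a critical point.

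I do not expect a genuine obstacle here, because existence of the derivatives is already supplied by Theorem~\ref{t:FirstVariations}. The only point needing care is to check that the perturbations $u+tv$ stay inside the $L^2$-ball of radius $\delta$ as well as inside $\cE$, and that $-v\in\cA_u^+$. Both follow immediately from the definitions \eqref{cAu} and \eqref{cAu+}.
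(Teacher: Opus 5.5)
Your proof is correct and follows essentially the same route as the paper. Both arguments restrict to the segment $u+tv$ with $t$ small enough to stay in the $L^2$-ball, read off the sign of the difference quotients from local maximality, and pass to the limit using the already-established existence of the (one-sided) derivative. The only cosmetic difference is that the paper gets $J'[u][v]=0$ directly from Fermat's rule for the $C^1$ function $\alpha_v$ of Theorem~\ref{t:alphav}, while you apply the one-sided inequality to $\pm v$, which needs only the existence statement of Theorem~\ref{t:FirstVariations}.
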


\begin{proof}
Let $u \in \cE$ be a local maximizer of $J: \cE \to \R.$ 
By definition, there  exists $\delta >0$ such that $J[u] \ge J[w]$ for any 
$w \in \cE $ with $ \|u-w\|_{L^2(\Omega)} < \delta.$ For any $v\in\cA_u$, 
there exists $t_v>0$ such that
$u+tv\in\cE$ for all $|t|\le t_v$. Set 
$t_0:=\min\{ t_v, \delta/(1 + \| v \|_{L^2(\Omega)})\}>0$. 
Then for every $|t|\le t_0$ we have 
$ u + t v \in \cE$ and $\|u-(u+tv)\|_{L^2(\Omega)}=|t|\|v\|_{L^2(\Omega)}<\delta.$
Hence $J[u]\ge J[u+tv] \,\mbox{for all } |t|\le t_0$. 
By Theorem $\ref{t:alphav}$, $\alpha_v (t)=J[u+tv]\in C^1(-t_v,t_v)$. 
Moreover, $\alpha_v$ attains a local maximum at $t=0$. Thus, $\alpha_v'(0)=J'[u][v]=0$.
Hence, $u$ is a critical point of $J: \cE \to \R.$

For any $v \in \cA_u^+$, there exists $t_v^+ > 0$ such that $u + t v \in \cE$ if 
$0 \le t \le t_v^+ $. Let $t_0^+ = \min\{ t_v^+, \delta/(1 + \| v \|_{L^2(\Omega)})\}> 0$. 
Then $u + t v \in \cE$ and $ \| u + t v - u \|_{L^2(\Omega)} < \delta,$ 
if $ 0 \le t \le t_0^+.$ Hence 
$J[u+tv] \le J[u]$ for any $t \in [0, t_0^+]$. By the same reasoning 
as above and Theorem \ref{t:alphav}, we have that the right derivative ${\alpha_v^+}'(0)=J'_+[u][v]\le 0$  for all $v \in \cA_u^+ $. 
\end{proof}

The following is our main result in this section: 

\begin{theorem}
\label{t:Jmax} The functional $J: \cE \to \R$ admits no local maximizers. 
\end{theorem}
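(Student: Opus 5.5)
We argue by contradiction. Suppose $u\in\cE$ is a local maximizer, and write $\Phi_u := f(\rho_u)(1-\psi_u)$. Here $\rho_u=\cT_g(u)$ and $\psi_u=\cL_u(uf'(\rho_u))$ are as in Theorem~\ref{t:rhou} and Lemma~\ref{l:Lu}. Note that $\Phi_u\in C(\overline{\Omega})$ and $\Phi_u\ge 0$, since $0\le\psi_u\le 1$. By Lemma~\ref{l:Jpu}, $u$ is a critical point, and $J'_+[u][v]\le 0$ for every $v\in\cA_u^+$. The plan is to turn these facts, via \eqref{Jpuv}, into pointwise information on $\Phi_u$, and then to contradict the boundary behavior: on $\partial\Omega$ we have $\psi_u=0$ and $\rho_u=g$, so $\Phi_u=f(g)$ there.

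\emph{Step 1 (critical point condition).} Take directions $v=u_1\chi_A-u_2\chi_B$ with $u_1,u_2$ bounded, and $A,B\subset\{u\ge\eta\}$ for some $\eta>0$, normalized so that $\int v\,dx=0$. Such $v$ lie in $\cA_u$, since $u\pm tv\ge 0$ for small $|t|$. We first reduce to $u\in L^\infty$ levels $\{u\ge\eta\}$, because $u$ is only in $L^2$. Then \eqref{Jpuv} gives $\int \Phi_u v\,dx=0$ for all mean-zero bounded $v$ supported in $\{u\ge\eta\}$. Hence $\Phi_u$ is a.e.\ constant on $\{u\ge\eta\}$. Letting $\eta\searrow0$ shows $\Phi_u=c_u$ a.e.\ on $\{u>0\}$, with $c_u\ge0$.

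\emph{Step 2 (one-sided condition).} Now move mass from $\{u>0\}$ into $\{u=0\}$. Take $v=\chi_D/|D|-\chi_A/|A|$, where $D\subset\{u=0\}$ has positive measure and $A\subset\{u\ge\eta\}$. This $v$ lies in $\cA_u^+$, and $J'_+[u][v]\le0$ yields $\frac{1}{|D|}\int_D\Phi_u\,dx\le c_u$. Since $D$ is arbitrary, $\Phi_u\le c_u$ a.e.\ on $\{u=0\}$. Combined with Step 1, $\Phi_u\le c_u$ a.e.\ on $\Omega$, and by continuity everywhere on $\overline{\Omega}$. (If $|\{u=0\}|=0$ this step is vacuous, and we get $\Phi_u=c_u$ a.e.)

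\emph{Step 3 (contradiction).} This is the main obstacle. The idea is to compare interior and boundary values at a maximum point of $g$. At $y_0$ we have $\Phi_u(y_0)=f(g_{\max})$, so $c_u\ge f(g_{\max})$. On the other hand, $\rho_u<g_{\max}$ in $\Omega$ by Theorem~\ref{t:rhou}, so on $\{u>0\}$
\[
c_u=\Phi_u=f(\rho_u)(1-\psi_u)\le f(\rho_u)<f(g_{\max}).
\]
This is a contradiction, provided $|\{u>0\}|>0$, which holds because $\int_\Omega u\,dx=M_{\rm e}>0$. Note the argument needs only $\Phi_u\le c_u$ near $y_0$, by continuity up to $\partial\Omega$.

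The delicate points are the following. First, we must justify that $\Phi_u$ is continuous on $\overline{\Omega}$ with boundary values $f(g)$, which follows from $\rho_u,\psi_u\in H^2\hookrightarrow C(\overline\Omega)$. Second, the admissibility of the perturbations requires care when $u$ is unbounded; the cut-off sets $\{u\ge\eta\}$ handle the decreasing part. Third, in the case $|\{u=0\}|=0$, Step 1 already gives $\Phi_u=c_u$ everywhere by continuity, and the same boundary comparison applies.
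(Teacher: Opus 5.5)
Your proposal is correct and follows essentially the paper's argument. It derives the Euler--Lagrange equality $f(\rho_u)(1-\psi_u)=c_u$ on $\{u>0\}$, uses a one-sided mass-transfer variation to get $\le c_u$ on $\{u=0\}$, and then uses continuity up to $\partial\Omega$ and evaluation at $y_0$ against $\rho_u<g_{\rm max}$. Your test directions are a slightly simpler substitute for the paper's use of Lemma~\ref{l:AuBu} and its truncated competitor $w_\ve$: bounded mean-zero $v$ supported in $\{u\ge\eta\}$, and $\chi_D/|D|-\chi_A/|A|$ with $A\subset\{u\ge\eta\}$. They also make the separate fact $|\{u=0\}|>0$ from Lemma~\ref{l:EL4J} unnecessary.
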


To prove the theorem, we need to study the conditions that, for a given $u \in \cE,$ 
$J'[u][v] = 0$ for any $v \in \cA_u$ and
$J'_+[u][v] \le 0$ for any $v \in \cA_u^+$.
Let us first  characterize
the set $\cA_u$ defined in \eqref{cAu}. Let $u\in \cE.$ We define 
\begin{align*}
&\hat{\cA}_u = \left\{ v \in L^2(\Omega): 
\int_\Omega v\, dx = 0 
\mbox{ and } |v| \le C_v u \mbox{ a.e.\ in }\, \Omega \mbox{ for some } C_v > 0 \right\}, \\
& \tilde{\cA}_u = \left\{ v \in L^2(\Omega): 
\int_\Omega v\, dx = 0 
\mbox{ and } v = u \varphi \mbox{ a.e.\ in }\, \Omega \mbox{ for some } \varphi \in L^\infty(\Omega)  \right\},
\\
&
\cB_u  = \left\{ w \in L^2(\Omega): 
\int_\Omega w\, dx = 0
\mbox{ and } w=0 \mbox{ a.e.\ in }\, \{ u =0\}\right\}, 
\end{align*}
where $\{ u = 0 \} = \{ x \in \Omega: u(x) = 0 \}.$
These definitions do not depend on the representative of $u$ in $L^1(\Omega).$
Moreover, it is clear that all $\hat{\cA}_u$, $\tilde{\cA}_u$, and 
$\cB_u$ are subspaces of $L^2(\Omega)$.

\begin{lemma}
    \label{l:AuBu} 
Let $u \in \cE.$ Then, $\cA_u = \hat{\cA}_u=\tilde{\cA}_u\subseteq \cB_u ={\overline{\cA_u}^{L^2(\Omega)}},$ 
where ${\overline{{\cA}_u}}^{L^2(\Omega)}$ denotes  the closure of $\cA_u$ in $L^2(\Omega)$. 
\end{lemma}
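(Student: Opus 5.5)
We prove the chain of inclusions $\cA_u \subseteq \hat{\cA}_u \subseteq \tilde{\cA}_u \subseteq \cA_u$, then $\cA_u \subseteq \cB_u$, and finally the density statement $\cB_u \subseteq \overline{\cA_u}^{L^2(\Omega)}$ together with the closedness of $\cB_u$.

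\emph{Step 1: $\cA_u\subseteq\hat{\cA}_u$.} Let $v\in\cA_u$ with $u+tv\in\cE$ for $|t|\le t_v$. Integrating gives $\int_\Omega v\,dx=0$. Nonnegativity at $t=\pm t_v$ gives $u\pm t_v v\ge0$ a.e., i.e.\ $|v|\le u/t_v$ a.e. So we may take $C_v=1/t_v$.

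\emph{Step 2: $\hat{\cA}_u\subseteq\tilde{\cA}_u$.} Given $|v|\le C_v u$, set $\varphi=v/u$ on $\{u>0\}$ and $\varphi=0$ on $\{u=0\}$. Then $|\varphi|\le C_v$, and $v=u\varphi$ a.e., since $v=0$ a.e.\ where $u=0$.

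\emph{Step 3: $\tilde{\cA}_u\subseteq\cA_u$.} If $v=u\varphi$ with $\|\varphi\|_{L^\infty}\le C$ and $\int_\Omega v\,dx=0$, then for $|t|\le 1/(C+1)$ we have $u+tv=u(1+t\varphi)\ge0$. Moreover $u+tv\in L^2(\Omega)$ and its integral is $M_{\rm e}$, so $u+tv\in\cE$.

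\emph{Step 4: $\cA_u\subseteq\cB_u$.} This is immediate from $|v|\le C_v u$, which forces $v=0$ a.e.\ on $\{u=0\}$. Note also that $\cB_u$ is closed in $L^2(\Omega)$: both the zero-mean condition and vanishing on a fixed set survive $L^2$ limits, the latter via an a.e.\ convergent subsequence. Hence $\overline{\cA_u}\subseteq\cB_u$.

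\emph{Step 5: density, $\cB_u\subseteq\overline{\cA_u}$.} This is the main point. Fix $w\in\cB_u$. For $k\ge1$ let
\[
E_k=\{1/k\le u\}\cap\{|w|\le k\},\qquad w_k=w\chi_{E_k}-c_k u\chi_{E_k},
\]
where $c_k$ is chosen so that $\int_\Omega w_k\,dx=0$:
\[
c_k=\frac{\int_{E_k}w\,dx}{\int_{E_k}u\,dx}.
\]
This requires $\int_{E_k}u>0$, which holds for large $k$ because $\int_{E_k}u\to M_{\rm e}$.

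On $E_k$ we have $|w|\le k\le k^2u$, so $|w_k|\le (k^2+|c_k|)u$ and $w_k\in\hat{\cA}_u=\cA_u$.

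Since $w=0$ a.e.\ off $\{u>0\}$ and $\bigcup_k E_k=\{u>0\}\cap\{|w|<\infty\}$ up to null sets, dominated convergence gives $w\chi_{E_k}\to w$ in $L^2(\Omega)$. It also gives $\int_{E_k}w\,dx\to\int_\Omega w\,dx=0$, so $c_k\to0$. Then $\|c_k u\chi_{E_k}\|_{L^2}\le|c_k|\,\|u\|_{L^2}\to0$, and therefore $w_k\to w$ in $L^2(\Omega)$.

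The only care needed is the choice of normalizing correction. Using $u\chi_{E_k}$ rather than a constant keeps the correction dominated by $u$, so the approximants stay in $\cA_u$.
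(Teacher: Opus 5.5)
Your proof is correct and follows the paper's argument. It uses the same characterization of $\cA_u$ via $|v|\le C_v u$ and $\varphi=v/u$, and the same density mechanism: truncate $w$ to something dominated by a multiple of $u$, then restore zero mean by subtracting a vanishing multiple of $u$. The only cosmetic difference is that the paper truncates by clamping, $v_n=\max\{-nu,\min\{nu,w\}\}$, and corrects with $c_n u$ where $c_n=M_{\rm e}^{-1}\int_\Omega v_n\,dx$; this avoids your need to take $k$ large enough that $\int_{E_k}u\,dx>0$.
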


\begin{proof}
    Suppose $v \in \cA_u$. Then there exists $t_v>0$ such that $u+tv \in \cE$ 
    if $|t|\le t_v$. By the definition of $\cE$, we must have 
    \[
    \displaystyle{\int_\Omega v \, dx=0}.
    \]
    We also have $tv = u+tv \ge 0$ a.e.\ in the set $\{u=0\}$ if $|t|\le t_v$. By considering
    $t > 0$ and $t < 0$, we find that
    $v = 0$ a.e.\ in $\{ u = 0 \}$. Moreover, we have $u+t_v v\ge 0$ and $u-t_vv\ge 0$ a.e.\
    in the set $\{u>0\}$, implying that $ |v | \le (1/t_v) u$ a.e.\ in $\{ u > 0 \}. $
Hence, $|v| \le C_v u$ a.e.\ in $\Omega$ with $C_v=1/t_v$ and hence $v \in \hat{\cA}_u.$
Conversely, suppose $v \in \hat{\cA}_u$. Then
$|v| \le C_v u$ a.e.\ in $\Omega$ for some $C_v > 0$. Set $t_v=1/{C_v}$.
Then, for $|t| \le t_v,$ we have $tv \ge -t_v |v| \ge - t_v C_v u=-u$. 
Thus, $u+tv\ge 0$ a.e.\ in $\Omega$, and hence $u + t v \in \cE,$
for $|t| \le t_v$, i.e. $v \in \cA_u$. Hence, $\cA_u = \hat{\cA}_u.$

Clearly, $\tilde{\cA}_u \subseteq \hat{\cA}_u.$ Let $v \in \hat{\cA}_u.$ Thus, 
$ |v| \le C_v u $ a.e.\ in $\Omega$ for some $C_v > 0. $ Define $\varphi$ by
$\varphi(x) = 0$ if $u(x) = 0$ and $\varphi(x) = v(x)/u(x)$ if $u(x) \ne 0.$ Then, 
$\varphi \in L^\infty(\Omega)$ and $v = u \varphi$ a.e.\ in $\Omega$. Hence, $v \in \tilde{\cA}_u$
and  $\hat{\cA}_u \subseteq \tilde{\cA}_u.$ Thus, $\hat{\cA}_u = \tilde{\cA}_u.$

If $v \in \hat{\cA}_u$, then $v = 0$ a.e.\ in $\{ u = 0 \}$. Hence, $v \in \cB_u.$ 
Thus, $\cA_u = \hat{\cA}_u = \tilde{\cA}_u \subseteq \cB_u.$

Let $w \in \cB_u$
and define $v_n=\max\{-nu,\min\{nu,w\}\}$ for each integer $n \ge 1$. 
Thus, $|v_n| \le |w|$ a.e.\ in $\Omega$ and $v_n = 0$ a.e.\ in $\{ u = 0 \}.$ 
Since $w \in \cB_u$
we infer that $v_n \in L^2(\Omega)$. 
Moreover, 
$v_n \to w \chi_{\{u>0\}}=w$ a.e.\ in $\Omega$. By the Lebesgue
Dominated Convergence Theorem, $v_n \to w $ in $L^2(\Omega)$. 
Since $u \in \cE,$ we have $| \{ u > 0 \} | > 0.$ Define 
\[
w_n=v_n-c_n u \qquad \mbox{ where } c_n= \frac{1}{M_{\rm e}}\int_{\Omega} v_n\,dx
\qquad(n=1, 2, \dots). 
\]
Then, for each $n \ge 1,$ $w_n \in L^2(\Omega)$  and 
\[
\int_\Omega w_n \, dx = 0. 
\]
Moreover, $|w_n| \le |v_n| +|c_n|u \le ( n+ |c_n|) u$ a.e.\ in $\Omega.$  
Hence, each $w_n \in \hat{\cA}_u = \cA_u.$
Since $v_n \to w$ in $L^2(\Omega)$ and $w\in \cB_u$, we have
\[
\int_{\Omega} v_n\,dx \to 
\int_{\Omega} w\,dx=0 \qquad \mbox{as } n\to \infty. 
\]
Therefore, $c_n \to 0$ as $n \to \infty.$ Thus, the convergence $v_n \to w$ in $L^2(\Omega)$ and 
the fact that $u \in L^2(\Omega)$ together
imply that $w_n \to w$ in $L^2(\Omega).$
Thus, $w \in {\overline{{\cA}_u}}^{L^2(\Omega)}$, and $\cB_u \subseteq {\overline{\cA_u}}^{L^2(\Omega)}$.
It is clear that $\cB_u$ is closed in $L^2(\Omega)$. Thus, 
since $\cA_u \subseteq \cB_u,$ we have 
$\cB_u = {\overline{\cA_u}}^{L^2(\Omega)}.$
\end{proof}

For any $p \in [1, \infty)$ and any measurable $A \subset \R^3$ with the Lebesgue measure $|A| > 0$, we denote 
\[
L_0^p(A) = \left\{ w \in L^p(A): \int_A w \, dx = 0 \right\}. 
\]
The following elementary result will be used later: 

\begin{lemma}
\label{l:ubar}
Let $A$ be a measurable subset of $\R^3$ with $0<|A|<\infty$ and $u\in L^2(A)$. Suppose
\begin{equation}
\label{Auv}
\int_{A} u v \, dx=0 \qquad \forall v \in L_0^2(A). 
\end{equation}
Then $u$ is constant a.e.\ in $A$. 
\end{lemma}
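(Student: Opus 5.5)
The natural approach is to subtract the mean of $u$ on $A$ and use the resulting function itself as the test function in \eqref{Auv}. Set
\[
\bar{u} = \frac{1}{|A|}\int_A u \, dx ,
\]
which is well defined and finite. Indeed, $u \in L^2(A)$ and $|A| < \infty$ give $u \in L^1(A)$ by the Cauchy--Schwarz inequality, with $\int_A |u|\,dx \le |A|^{1/2}\|u\|_{L^2(A)}$. Then define $v = u - \bar{u}$. Since constants lie in $L^2(A)$ when $|A|<\infty$, we have $v \in L^2(A)$, and by the choice of $\bar u$ we also have $\int_A v\,dx = 0$. Hence $v \in L_0^2(A)$ is an admissible test function in \eqref{Auv}.

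Applying \eqref{Auv} with this $v$ gives $\int_A u(u-\bar u)\,dx = 0$. Next observe that
\[
\int_A \bar u\,(u-\bar u)\,dx = \bar u \int_A (u - \bar u)\,dx = 0 .
\]
Subtracting this from the previous identity yields
\[
\int_A (u-\bar u)^2\,dx = 0 .
\]
Therefore $u = \bar u$ a.e.\ in $A$, which is exactly the claim.

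There is no real obstacle in this argument. The only points needing care are that $\bar u$ is finite and that constants belong to $L^2(A)$; both follow from $0<|A|<\infty$.
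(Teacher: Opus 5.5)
Your proof is correct and follows the same approach as the paper: test \eqref{Auv} with $v = u - \overline{u}$ and subtract $\overline{u}\int_A (u-\overline{u})\,dx = 0$ to obtain $\int_A (u-\overline{u})^2\,dx = 0$. Your additional checks that $\overline{u}$ is finite and that constants lie in $L^2(A)$ are the details the paper leaves implicit.
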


\begin{proof}
Denoting 
\[
\overline{u}= \frac{1}{|A|}\int_{A} u \, dx, 
\]
we have $u - \overline{u} \in L_0^2(A)$. It thus follows from \eqref{Auv} with $v = u - \overline{u}$ that
\[
\int_A (u-\overline{u})^2 dx = \int_A u (u - \overline{u})\, dx - \overline{u}\int_A (u - \overline{u})\, dx =0. 
\]
Hence, $u = \overline{u}$ a.e.\ in $A.$
\end{proof}

The following lemma provides some necessary conditions for a critical point of
the functional $J: \cE \to \R:$ 

\begin{lemma}
    \label{l:EL4J} Let $u\in \cE$ be a critical point of $J: \cE \to \R$.  
    Let $\rho_u = \cT_g(u) \in H_g^1(\Omega)\cap H^2(\Omega)$ and
    $\psi_u = \cL_u (uf'(\rho_u)) \in H_0^1(\Omega)\cap H^2(\Omega)$
be given as in Theorem~\ref{t:rhou} and Lemma~\ref{l:Lu}, respectively. 
    Then, the following hold true: 
\begin{compactenum}
    \item[{\rm (1)}]
    There exists  a constant $c_u \ge 0$ such that 
\begin{equation}
\label{cu}
f(\rho_u) ( 1 - \psi_u)=c_u \quad \mbox{ a.e.\ in }\,  \{u>0\}.
\end{equation}
\item[{\rm (2)}] The measure $| \{ u  = 0 \} | > 0.$
\end{compactenum}
\end{lemma}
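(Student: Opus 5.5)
Part (1) will come from the first-variation formula together with Lemma~\ref{l:AuBu} and Lemma~\ref{l:ubar}. Set $G := f(\rho_u)(1-\psi_u)$. By Theorem~\ref{t:rhou} and Lemma~\ref{l:Lu}, $0\le\rho_u< g_{\rm max}$ and $0\le\psi_u\le 1$, so $G\in L^\infty(\Omega)$ and $G\ge 0$. Since $u$ is a critical point, Theorem~\ref{t:FirstVariations} gives $\int_\Omega G v\,dx=0$ for all $v\in\cA_u$. The map $v\mapsto\int_\Omega Gv\,dx$ is continuous on $L^2(\Omega)$, so this identity extends to the closure $\overline{\cA_u}^{L^2(\Omega)}=\cB_u$ (Lemma~\ref{l:AuBu}).

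Let $A=\{u>0\}$; then $0<|A|<\infty$ because $\int_\Omega u\,dx=M_{\rm e}>0$. Any $w\in L_0^2(A)$ extended by zero lies in $\cB_u$. Hence $\int_A G w\,dx=0$ for all $w\in L^2_0(A)$, and Lemma~\ref{l:ubar} yields $G=c_u$ a.e.\ in $A$ for some constant $c_u$. Since $G\ge0$, we get $c_u\ge 0$, which is \eqref{cu}.

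For Part (2) I will argue by contradiction. Suppose $|\{u=0\}|=0$. Then \eqref{cu} holds a.e.\ in $\Omega$, and since $G$ is continuous on $\overline\Omega$ (as $\rho_u,\psi_u\in H^2(\Omega)\hookrightarrow C(\overline\Omega)$), $G\equiv c_u$ on $\overline\Omega$. On $\partial\Omega$ we have $\psi_u=0$ and $\rho_u=g$, so $f(g)=c_u$ on $\partial\Omega$. In particular $g\equiv g_{\rm max}$ on $\partial\Omega$, since $f$ is strictly increasing, and therefore $c_u=f(g_{\rm max})>0$. But inside $\Omega$ we have $\rho_u<g_{\rm max}$ by Theorem~\ref{t:rhou}, so $f(\rho_u)<f(g_{\rm max})$. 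Combined with $1-\psi_u\le 1$, this gives $G<f(g_{\rm max})=c_u$ at every interior point, a contradiction. Thus $|\{u=0\}|>0$.

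The only delicate point is justifying the passage from the a.e.\ identity to the boundary values. I will handle this through the continuity of $\rho_u$ and $\psi_u$ up to $\partial\Omega$, which holds since both lie in $H^2(\Omega)$ on a $C^2$ domain in $\R^3$. No further analysis beyond the cited results should be required; the interior strict inequality $\rho_u<g_{\rm max}$ from Theorem~\ref{t:rhou} supplies the contradiction directly.
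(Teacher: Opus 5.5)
Your proposal is correct and follows the paper's proof essentially step for step: you extend the vanishing first variation from $\cA_u$ to $\cB_u=\overline{\cA_u}^{L^2(\Omega)}$ by continuity, apply Lemma~\ref{l:ubar} on $\{u>0\}$, and in Part (2) obtain the contradiction from continuity up to $\partial\Omega$ together with the strict bound $\rho_u<g_{\rm max}$. Your intermediate deduction that $g\equiv g_{\rm max}$ on $\partial\Omega$ is valid but unnecessary, since the paper simply evaluates at $y_0$ to get $c_u=f(g_{\rm max})$.
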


\begin{proof}
(1) Note that $| \{ u > 0 \} | > 0$ as  $u \in \cE.$
By Definition~\ref{d:CriticalPoint} and Theorem \ref{t:FirstVariations}, we have
\[
J'[u][v] = \int_\Omega f(\rho_u) ( 1 - \psi_u) v \, dx=0 \qquad \forall  v \in \cA_u
\]
Since $f(\rho_u) ( 1 - \psi_u)\in L^2(\Omega),$ it follows from Lemma~\ref{l:AuBu} that
\[
\int_\Omega f(\rho_u) ( 1 - \psi_u) v \, dx=0 \qquad \forall v \in \cB_u = \overline{\cA_u}^{L^2(\Omega)}.
\]
By the definition of $\cB_u$, this is equivalent to 
\[
\int_{\{ u > 0 \}}  f(\rho_u) ( 1 - \psi_u) v \, dx=0 \qquad \forall v \in L_0^2(\{ u > 0 \}).
\]
Now, it follows from Lemma~\ref{l:ubar} that \eqref{cu} holds true for some constant
$c_u \in \R.$ Since $\rho_u \ge 0$ in $\Omega$ by Theorem~\ref{t:rhou}
and $\psi_u \le 1$ in $\Omega$ by Lemma~\ref{l:Lu}, we have $c_u \ge 0.$

 (2) Suppose that $|\{ u =0\}|=0$, i.e., $u >0 $ a.e.\ in $\Omega$.
By Part {\rm (1)}, $f(\rho_u)(1-\psi_u)=c_u $ a.e.\ in 
$\Omega.$ Since $\rho_u,\psi_u\in H^2(\Omega)\hookrightarrow C(\overline{\Omega})$, 
$f(\rho_u)(1-\psi_u)= c_u$ pointwise in $\overline{\Omega}.$
On $\partial\Omega$, we have $\psi_u=0$ and $\rho_u=g$. 
Note that $y_0 \in \partial \Omega$ and $g(y_0) = g_{\rm max} > 0.$ 
Therefore,  $c_u = f(g(y_0)) = f(g_{\rm max}) > 0.$
Since $0 \le \psi_u \le 1 $ in $\Omega$ by Lemma~\ref{l:Lu} and
$0\le \rho_u <  g_{\rm max} $ in $\Omega$ by Theorem~\ref{t:rhou}, and 
since $f$ is strictly increasing,
we have 
\[
f(g_{\rm max}) = c_u=f(\rho_u) (1  - \psi_u) \le f(\rho_u) < f(g_{\rm max}) \qquad \mbox{in } \Omega.
\]
This is impossible. 
\end{proof}

We are now ready to prove the main theorem in this section. 

\begin{proof}[Proof of Theorem~\ref{t:Jmax}]
    Suppose $u \in \cE$ were a local maximizer of $J: \cE\to \R$. By Lemma~\ref{l:Jpu}, it
    is a critical point of $J: \cE \to \R.$
    Let $\rho_u = \cT_g(u)$ and 
    $\psi_u = \cL_u(uf'(\rho_u))$. By Lemma~\ref{l:EL4J}, there exists a constant $c_u \ge 0$ such that 
    \begin{equation}
    \label{cu1}
        f(\rho_u) (1 - \psi_u) = c_u \qquad \mbox{a.e.\ on } \{ u > 0 \}. 
    \end{equation}
We show that
\begin{equation}
    \label{cu2}
    f(\rho_u) (1 - \psi_u) \le c_u \qquad \mbox{a.e.\ on } \{ u = 0 \}. 
\end{equation}
Note by Lemma~\ref{l:EL4J} that $|\{ u = 0 \}| > 0.$ It suffices to show that
\begin{equation} 
\label{cuQ}
f(\rho_u) (1-\psi_u) \le c_u \qquad \mbox{a.e. on } Q
\end{equation}
for any measurable $Q \subseteq \{ u = 0 \}$ with $| Q | > 0$.

Since $u\in \cE$ is a local maximizer of $J: \cE \to \R$, it follows from Lemma~\ref{l:Jpu}
that $J'_+[u][v]\le 0$ for any $v \in \cA_u^+.$ Thus, we have by 
 Theorem ~\ref{t:FirstVariations} and \eqref{cu1} that 
\begin{align}
\label{Jpuv+}
    J'_+[u][v] & = \int_{\{ u > 0 \} } f(\rho_u) (1 - \psi_u) v \, dx + 
    \int_{\{ u = 0 \}} f(\rho_u) ( 1 - \psi_u) v\, dx \nonumber \\
    & = c_u \int_{\{ u > 0 \}} v \, dx  + \int_{\{ u = 0 \}} f(\rho_u) ( 1 - \psi_u) v\, dx 
    \nonumber \\
    & \le 0 \qquad \forall v\in \cA_u^+. 
\end{align}
We will construct a family of functions $w_\ve\in \cE$ for $\ve > 0$ small and
$v_\ve := w_\ve - u \in \cA_u^+$ so that we can apply this inequality to prove \eqref{cuQ}.
Figure~\ref{f:wve} illustrates the construction of $w_\ve.$

\begin{figure}[h]
\begin{center}
\includegraphics[width=4.4in,height=1.8in]{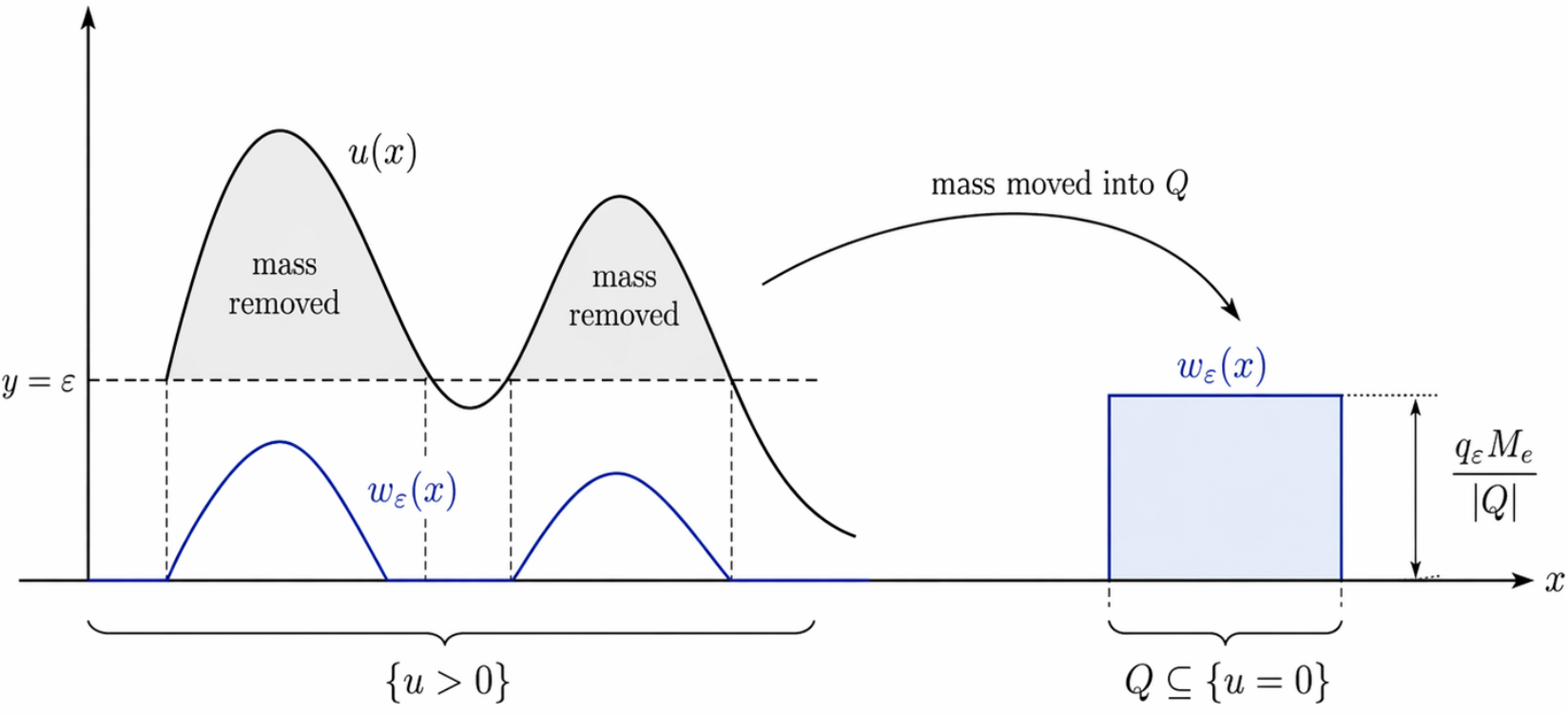}
\end{center}
\vspace{-4 mm}

\caption{Schematic of the construction of $w_\ve.$}
\label{f:wve}
\end{figure}

Since $ u \ge 0 $ a.e.\ in $\Omega$, 
$| \{ u > 0 \}| > 0$, and $\{ u > 0 \} = \cup_{n=1}^\infty \{ u > 1/n \}, $
there exists $\ve_0 \in (0, 1)$ such that 
 $|\{ u > \ve \}| > 0$ for any $\ve \in (0, \ve_0)$. Define 
\[
q_\ve = 1 - \frac{1}{M_{\rm e}} \int_{\{ u > \ve \}} (u - \ve) \, dx
\qquad \forall \ve \in (0, \ve_0).
\]
We have 
\[
q_\ve > \frac{1}{M_{\rm e}} \left( M_{\rm e} - \int_{\{ u > \ve \}} u  \, dx \right)
\ge\frac{1}{M_{\rm e}} \left( M_{\rm e} - \int_{\{ u  > 0 \}} u dx \right) = 0. 
\]
Moreover, by the Lebesgue Monotone Convergence Theorem, 
\[
\int_{\{ u > \ve \}} (u - \ve) \, dx 
= \int_{\{ u > 0 \}} u \chi_{\{ u > \ve \}}\, dx - \ve |\{ u > \ve \}| \to \int_{\{ u > 0 \}} u dx = M_{\rm e} 
\qquad \mbox{as } \ve \to 0^+.
\]
Consequently, $q_\ve \to 0$ as $\ve \to 0^+.$

Let $Q\subseteq \{ u = 0 \}$ be measurable and assume $| Q | > 0.$
Define for any $\ve \in (0, \ve_0)$
\[
w_{\ve} = \left\{
\begin{aligned}
    & \max\{ u - \ve, 0\} & \qquad \mbox{in } \{ u > 0 \}, \\
    & \frac{q_\ve M_{\rm e}}{|Q|} \chi_Q & \qquad \mbox{in } \{ u = 0 \}.
\end{aligned}
\right.
\]
It is clear that $w_\ve \in L^2(\Omega)$ and $w_\ve \ge 0$ in $\Omega.$ Moreover, 
\[
\int_\Omega w_\ve \, dx = \int_{\{ u > \ve\}}(u-\ve)\, dx 
+ q_\ve M_{\rm e} = (1-q_\ve) M_{\rm e} + q_\ve M_{\rm e} = M_{\rm e}.
\]
Thus, $w_\ve \in \cE.$ 
Define $v_\ve = w_\ve - u$ for any $\ve \in (0, \ve_0)$. Then, $u + t v_\ve = (1-t) u + t w_\ve \in \cE$ if $0 \le t \le 1.$ Hence, 
$v_\ve \in \cA_u^+.$

It follows from \eqref{Jpuv+} with $v$ replaced by $v_\ve$ and from the definition of $v_\ve$ and $w_\ve$ that 
\begin{align}
\label{Jpuve+}
        J'_+[u][v_\ve] &= c_u \int_{\{ u > 0 \}} v_\ve \, dx  + \int_{\{ u = 0 \}} f(\rho_u) ( 1 - \psi_u) w_\ve \, dx 
    \le 0.
\end{align}
By the definition of $w_\ve$, $v_\ve$, and $q_\ve$, we have 
\begin{align*}
c_u \int_{\{ u > 0 \}} v_\ve\, dx &=
c_u \int_{\{ u > 0 \} } (w_\ve - u) \, dx \\
&
=c_u\int_{\{ u > 0 \}} \left( \max\{ u - \ve, 0 \}-u\right)\, dx 
\\
& = c_u \int_{\{ u > \ve \}} (u-\ve)\, dx - c_u\int_{\{u > 0 \}}u\, dx \\
& = c_u M_{\rm e} ( 1 - q_\ve) - c_u M_{\rm e}\\
& = - c_u M_{\rm e} q_\ve. 
\end{align*}
On the other hand, by the definition of $w_\ve$, we have
\[
\int_{\{ u = 0 \}} f(\rho_u) (1 - \psi_u) w_\ve \, dx 
= \frac{q_\ve M_{\rm e}}{|Q|} \int_Q f(\rho_u) (1 - \psi_u)\, dx. 
\]
It follows now from \eqref{Jpuve+} that
\[
- c_u M_{\rm e} q_\ve + \frac{q_\ve M_{\rm e}}{|Q|} \int_Q f(\rho_u) (1 - \psi_u)\, dx\le 0
\qquad \forall \ve \in (0, \ve_0).
\]
Since $q_\ve > 0$ if $\ve \in (0, \ve_0)$, this implies that 
\[
\int_Q \left[ f(\rho_u) ( 1 - \psi_u) - c_u \right] \, dx \le 0. 
\]
Since this is true for all $Q\subseteq \{ u = 0 \}$ with $|Q| > 0$, we obtain \eqref{cu2}. 

 Since both $\rho_u$ and $\psi_u$ are continuous on $\overline{\Omega}$, 
 it now follows from \eqref{cu1} and \eqref{cu2} that 
\[
f(\rho_u) ( 1 - \psi_u) \le c_u \qquad \forall x \in \overline{\Omega}. 
\]
Recall that $y_0\in \partial \Omega$ satisfies 
$g(y_0) = g_{\rm max} := \max_{y \in \partial \Omega} g(y) > 0$. 
Since $\psi_u = 0 $ on $\partial \Omega$, we get by setting $x = y_0$ that 
$f(g_{\rm max} ) \le c_u. $ Consequently,  
since $\rho_u < g_{\rm max}$ in $\Omega$ by Theorem~\ref{t:rhou} and
$\psi_u \ge 0$ in $\Omega$ by Lemma~\ref{l:Lu}, we get
\[
f(g_{\rm max}) \le c_u = f(\rho_u) (1 - \psi_u) \le f(\rho_u) <f(g_{\rm max}) \qquad \mbox{a.e. in } \{ u > 0 \}. 
\]
This is a contradiction.
\end{proof}

\begin{remark}
    The proof above does not extend to proving possibly the non-existence of a critical point as the 
    use of the inequality $J'_+[u][v]\le 0$ for any $v \in \cA_u^+$ is crucial; cf.\ \eqref{Jpuv+}. 
    This inequality holds true for a local maximizer but not necessarily for a critical point.
\end{remark}

\section{Existence of Maximizers of a Regularized Reaction Functional}
\label{s:Regularization}

We recall that the regularized functional $\hat{J}$ is defined in \eqref{Jveu},
\[
\hat{J} [u]= \int_\Omega \left[ u f(\rho_u) - \frac{1}{\beta} u \log u - \frac{\ve}{2} |\nabla u|^2 \right] dx,
\]
We denote $\hat{\cE} = \cE \cap H^1(\Omega).$
\begin{theorem}
The functional $\hat{J} : \hat{\cE} \to \R$ admits a maximizer. 
\end{theorem}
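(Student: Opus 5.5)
We use the direct method in the calculus of variations on $\hat{\cE}=\cE\cap H^1(\Omega)$, which is nonempty because the constant $M_{\rm e}/|\Omega|$ lies in it.

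\emph{Upper bound.} First we show that $\hat{J}$ is bounded above on $\hat{\cE}$. By Theorem~\ref{t:rhou}, $0\le \rho_u<g_{\rm max}$, so
\[
\int_\Omega u f(\rho_u)\,dx \le M_{\rm e} f(g_{\rm max}).
\]
Since $s\log s\ge -1/e$ for $s\ge 0$, we have
\[
-\frac1\beta\int_\Omega u\log u\,dx \le \frac{|\Omega|}{e\beta}.
\]
Together with $-\frac{\ve}{2}\int_\Omega|\nabla u|^2\,dx\le 0$, this gives
\[
\alpha:=\sup_{\hat{\cE}}\hat{J}\le M_{\rm e}f(g_{\rm max})+\frac{|\Omega|}{e\beta}<\infty,
\]
and $\alpha > -\infty$.

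\emph{Compactness of a maximizing sequence.} Take $u_k\in\hat{\cE}$ with $\hat{J}[u_k]\to\alpha$. Rearranging the bounds above, we get
\[
\frac{\ve}{2}\|\nabla u_k\|_{L^2(\Omega)}^2 \le M_{\rm e}f(g_{\rm max})+\frac{|\Omega|}{e\beta}-\hat{J}[u_k],
\]
and the right-hand side is bounded for $k$ large. To control the full $H^1$ norm, use the Poincar\'e--Wirtinger inequality with the fixed mean $M_{\rm e}/|\Omega|$:
\[
\left\|u_k-\frac{M_{\rm e}}{|\Omega|}\right\|_{L^2(\Omega)}\le C\|\nabla u_k\|_{L^2(\Omega)}.
\]
Hence $\{u_k\}$ is bounded in $H^1(\Omega)$. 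By Rellich--Kondrachov (note $H^1\hookrightarrow L^p$ compactly for $p<6$ in three dimensions), a subsequence satisfies $u_k\rightharpoonup u$ weakly in $H^1(\Omega)$, $u_k\to u$ strongly in $L^2(\Omega)$, and $u_k\to u$ a.e. The limit satisfies $u\ge 0$ a.e. and $\int_\Omega u\,dx=M_{\rm e}$ by $L^1$ convergence, so $u\in\hat{\cE}$.

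\emph{Upper semicontinuity, term by term.}
\begin{compactenum}
\item[(i)] By Lemma~\ref{l:rhouk}(1), strong $L^2$ convergence of $u_k$ gives $\int_\Omega u_kf(\rho_{u_k})\,dx\to\int_\Omega uf(\rho_u)\,dx$.
\item[(ii)] For the entropy term, we want $\int_\Omega u_k\log u_k\,dx\to\int_\Omega u\log u\,dx$. The function $s\mapsto s\log s$ is continuous on $[0,\infty)$, so $u_k\log u_k\to u\log u$ a.e. We also have $|s\log s|\le C(1+s^2)$, and $u_k^2\to u^2$ in $L^1(\Omega)$. The generalized dominated convergence theorem (equivalently, Vitali with uniform integrability from the $L^2$ bound) then gives the convergence. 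Alternatively, convexity of $s\log s$ plus Fatou's lemma applied to $u_k\log u_k+1/e\ge0$ gives $\liminf_k\int_\Omega u_k\log u_k\,dx\ge\int_\Omega u\log u\,dx$, which is the direction we need.
\item[(iii)] Weak lower semicontinuity of the $L^2$ norm gives $\liminf_k\|\nabla u_k\|_{L^2(\Omega)}^2\ge\|\nabla u\|_{L^2(\Omega)}^2$.
\end{compactenum}

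\emph{Conclusion.} Combining (i)--(iii),
\[
\hat{J}[u]\ge\limsup_{k\to\infty}\hat{J}[u_k]=\alpha.
\]
Since $u\in\hat{\cE}$, also $\hat{J}[u]\le\alpha$, so $u$ is a maximizer.

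The only step needing care is the entropy term, where $u\log u$ has no sign and the integrand may be unbounded. The quadratic growth bound together with strong $L^2$ convergence resolves this, and the one-sided Fatou argument suffices in any case. The rest is routine once Lemma~\ref{l:rhouk} provides continuity of the reaction term.
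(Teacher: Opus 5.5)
Your proof is correct and follows essentially the same direct-method argument as the paper. It uses the same upper bound $M_{\rm e} f(g_{\rm max}) + |\Omega|/(e\beta)$, the gradient bound plus Poincar\'e--Wirtinger for $H^1$-boundedness, Lemma~\ref{l:rhouk} for the reaction term, Fatou for the entropy term (your explicit shift $u_k\log u_k + 1/e \ge 0$, and the observation via $|s\log s|\le C(1+s^2)$ that this term actually converges, make precise a step the paper states briefly), and weak lower semicontinuity for the gradient term.
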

\begin{proof}
Note that $s\mapsto s \log s $ is minimized at $s = e^{-1}$ with the minimum value $-e^{-1}.$ 
Thus, by Theorem~\ref{t:supJ}, 
we have 
\[
\hat{J}[u ] \le M_{\rm e} f(g_{\rm max}) + (\beta e)^{-1} |\Omega|\qquad \forall 
u \in \hat{\cE}. 
\]
Now, the constant function ${\bar{u}} :=M_{\rm e}/{|\Omega|} \in \hat{\cE}$ and 
\[
\hat{J}[\bar{u}]=J[\bar{u}]-M_{\rm e}{\beta}^{-1} \log \left( M_{\rm e}|\Omega|^{-1}\right) > -\infty.
\]
Therefore, ${-\infty<\alpha:=\sup_{u\in \hat{\cE}} \hat{J}[u] < +\infty.}$

Let $\{ u_n\} \subset \hat{\cE}$ be such that $ \hat{J}[u_n] \ge \alpha - 1 $ for all $n \ge 1$ 
and $\hat{J}[u_n] \to \alpha$ as 
$n\to \infty.$
Since each $u_n\in \hat{\cE}$, the bound on $J$ yields
\[
\alpha-1\le \hat{J} [u_n] 
\le M_{\rm e} f(g_{\rm max})+\frac{|\Omega|}{\beta e} - \frac{\ve}{2}\int_\Omega |\nabla u_n|^2\,dx
\]
Hence,
\[
\frac{\ve}{2}\int_\Omega |\nabla u_n|^2\,dx\le M_{\rm e} f(g_{\rm max}) + \frac{|\Omega |}{\beta e}-\alpha+1. 
\]
Thus, $\{\nabla u_n\}$ is bounded in $L^2(\Omega)$. Since the integral of 
each $u_n$ over $\Omega$ is $M_{\rm e}$, 
by the Poincar\'e inequality (cf.\ Theorem~1 in 5.8.1 of \cite{EvansBook2010}), 
$\{u_n\}$ is bounded in $H^1(\Omega)$. By the compact embedding
$H^1(\Omega)\hookrightarrow L^2(\Omega)$, there exist $u\in H^1(\Omega)$ and a subsequence
of $\{ u_n \}$,
not relabeled, such that
$u_n\rightharpoonup u $ in $H^1(\Omega),$
$u_n\to u $ in $L^2(\Omega)$, and $u_n(x)\to u(x)$ for a.e.\ $x\in \Omega.$
It is clear that $u \in \hat{\cE}.$

By Lemma~\ref{l:rhouk}, we have $J[u_n]\to J[u].$
On the other hand, by the weak lower semi-continuity, we have
\[
\liminf_{n\to\infty}\int_\Omega |\nabla u_n|^2\,dx \ge \int_\Omega |\nabla u|^2\,dx.
\]
By Fatou's lemma, 
\[
\liminf_{n\to \infty } \int_\Omega u_n \log u_n \, dx \ge \int_\Omega u \log u\, dx.
\]
Therefore,
$\alpha = \limsup_{n\to \infty} \hat{J}[u_n] \le \hat{J}[u] \le \alpha. $
Thus, $u$ is a maximizer of $\hat{J}$ over $\hat{\cE}$.
\end{proof}

\section*{Acknowledgment}

This work was supported in part by the US National Science Foundation
through the grant DMS-2208465. B.L.\ thanks Professor Ulrich Gerland 
for helpful discussions.

\setlength{\bibsep}{0.0pt}
\bibliography{Localization_References}
\bibliographystyle{plain}

\end{document}